\title[Nonlinear Fenchel Conjugates]{Nonlinear Fenchel Conjugates}
\author[A. Schiela]{Anton Schiela\orcidlink{0000-0002-6959-2951}}
\address[A. Schiela]{Department of Mathematics, University of Bayreuth, 95440 Bayreuth, Germany}
\email{anton.schiela@uni-bayreuth.de}
\urladdr{https://num.math.uni-bayreuth.de/en/team/anton-schiela/}
\author[R. Herzog]{Roland Herzog\orcidlink{0000-0003-2164-6575}}
\address[R. Herzog]{Interdisciplinary Center for Scientific Computing, Heidelberg University, 69120 Heidelberg, Germany}
\address[R. Herzog]{Institute for Mathematics, Heidelberg University, 69120 Heidelberg, Germany}
\email{roland.herzog@iwr.uni-heidelberg.de}
\urladdr{https://scoop.iwr.uni-heidelberg.de}
\author[R. Bergmann]{Ronny Bergmann\orcidlink{0000-0001-8342-7218}}
\address[R. Bergmann]{Norwegian University of Science and Technology, Department of Mathematical Sciences, NO-7491 Trondheim, Norway}
\email{ronny.bergmannn@ntnu.no}
\urladdr{https://www.ntnu.edu/employees/ronny.bergmann}
\date{}
\dedicatory{}
\begin{document}

\begin{abstract}
The classical concept of Fenchel conjugation is tailored to extended real-valued functions defined on linear spaces.
In this paper we generalize this concept to functions defined on arbitrary sets that do not necessarily bear any structure at all.
This generalization is obtained by replacing linear test functions by general nonlinear ones.
Thus, we refer to it as nonlinear Fenchel conjugation.
We investigate elementary properties including the Fenchel-Moreau biconjugation theorem.
Whenever the domain exhibits additional structure, the restriction to a suitable subset of test functions allows further results to be derived.
For example, on smooth manifolds, the restriction to smooth test functions allows us to state the Fenchel-Young theorem for the viscosity Fréchet subdifferential.
On Lie groups, the restriction to real-valued group homomorphisms relates nonlinear Fenchel conjugation to infimal convolution and yields a notion of convexity.

\end{abstract}

\keywords{Fenchel conjugate, Fenchel duality, non-smooth analysis, convex functions, differentiable manifolds}

\makeatletter
\ltx@ifpackageloaded{hyperref}{%
\subjclass[2010]{\href{https://mathscinet.ams.org/msc/msc2020.html?t=49N15}{49N15}, \href{https://mathscinet.ams.org/msc/msc2020.html?t=90C25}{90C25}, \href{https://mathscinet.ams.org/msc/msc2020.html?t=26B25}{26B25}, \href{https://mathscinet.ams.org/msc/msc2020.html?t=49Q99}{49Q99}}
}{%
\subjclass[2010]{49N15, 90C25, 26B25, 49Q99}
}
\makeatother

\maketitle

\section{Introduction}%
\label{section:introduction}

Fenchel duality is a classical concept of (convex) analysis with numerous important applications.
On an arbitrary real linear space~$V$, the Fenchel conjugate~$\conjugate{f}$ of an extended real-valued function~$f \colon V \to \R \cup \set{+\infty, -\infty}$ is defined in purely algebraic terms according to
\begin{equation}
	\label{eq:Fenchel-conjugate:linear-space}
	\algebraicdualSpace{V}
	\ni
	x^*
	\mapsto
	\conjugate{f}(x^*)
	\coloneqq
	\sup_{x \in V} \paren[big]\{\}{x^*(x) - f(x)}
	\in
	\R \cup \set{+\infty, -\infty}
	,
\end{equation}
where $\algebraicdualSpace{V}$ denotes the algebraic dual of~$V$.
In practice, this concept is often particularized to locally convex or normed linear spaces~$V$ and their topological dual~$\dualSpace{V}$.
Results of Fenchel duality comprise, for instance, the Fenchel-Young theorem and the Fenchel-Moreau biconjugation theorem; see, \eg, \cite[Proposition~13.13 and Theorem~13.32]{BauschkeCombettes:2011:1}.

In recent years, Fenchel duality has also become the foundation of many algorithms for convex and non-smooth optimization.
As examples, we mention the family of primal-dual algorithms; see, \eg, \cite{EsserZhangChan:2010:1,Valkonen:2023:1}.
It is an ongoing effort to adapt some of these methods from linear to nonlinear operators as in \cite{Valkonen:2014:1,MomLangerSixou:2022:1}, as well as to nonlinear spaces.
In the latter context, the generalization of the notion of Fenchel conjugate, \eg, to functions defined on smooth manifolds, is of paramount importance.
Several approaches on Riemannian respectively Hadamard manifolds have been developed in \cite{BergmannHerzogSilvaLouzeiroTenbrinckVidalNunez:2021:1,SilvaLouzeiroBergmannHerzog:2022:1,DeCarvalhoBentoNetoMelo:2023:1}.

We also mention a related line of research by \cite{MartinezLegaz:2005:1} to generalizing convexity and conjugation, where two potentially nonlinear sets are set into duality by a coupling function.
We discuss the relation to our work in more detail in \cref{remark:relation-to-work-of-MartinezLegaz}.

\subsection*{Contributions}

In this paper we generalize the classical notion of Fenchel conjugation~\eqref{eq:Fenchel-conjugate:linear-space} to extended real-valued functions~$f$ defined on arbitrary sets~$\cM$ rather than linear spaces~$V$.
A high-level view to~\eqref{eq:Fenchel-conjugate:linear-space} reveals that each element $x^* \in \algebraicdualSpace{V}$ can be viewed as the generator of a probing function $x \mapsto x^*(x) - f(x)$.
The value of $\conjugate{f}(x^*)$ is then understood as the scalar measurement obtained by maximizing that probing function over~$x$.

The simple but far-reaching idea is to replace the linear test function~$x^*$ in the probing function $x^* - f$ by a general nonlinear function~$\varphi \colon \cM \to \R \cup \set{+\infty, -\infty}$.
It turns out that many elementary, mostly algebraic properties of this generalized Fenchel conjugate prevail and do not require any structure on the set~$\cM$ at all.
However, in case $\cM$ does exhibit more structure, refined results can be obtained by restricting the space of test functions to an appropriate space $\cF$ of more regular functions.
For instance, when $\cM$ is a metric space, continuous functions $\cF = C(\cM)$ can be chosen.
When $\cM$ is a smooth manifold, we may consider $\cF = C^k(\cM)$.
And when $\cM$ is a linear space, the classical Fenchel conjugate is recovered when we utilize the algebraic dual space $\cF = \algebraicdualSpace{\cM}$.

\subsection*{Organization}

The remainder of this paper is organized as follows.
Following some preliminary notations in \cref{section:preliminaries}, we introduce a nonlinear Fenchel conjugate in \cref{section:nonlinear-Fenchel-conjugate} and examine its properties when defined on a general set~$\cM$.
\Cref{section:F-regularization-and-biconjugation} generalizes the concept of $\Gamma$-regularization (see, \eg, \cite[Chapter~I.3]{EkelandTemam:1999:1}) by which a function~$f$ is approximated by a pointwise supremum of minorants, as well as the notion of the biconjugate.
\Cref{section:nonlinear-Fenchel-conjugates:manifolds} explores additional properties where $\cM$ is a smooth manifold, enabling a generalization of the well-known Fenchel-Young theorem.
We recover previous definitions of Fenchel conjugates on manifolds as special cases.
In \cref{section:nonlinear-Fenchel-conjugates:groups}, we discuss the case when $\cM$ is a group and obtain a generalized characterization of infimal convolution.
We draw some conclusions in \cref{section:conclusion}.

\section{Preliminaries}%
\label{section:preliminaries}

Let us denote extensions of the real numbers by $\extR \coloneqq \R \cup \set{+\infty}$ and $\ExtR \coloneqq \R \cup \set{+\infty, -\infty}$.
Both sets have a natural total ordering, so $=$, $\ge$, $\le$, $>$, $<$ as well as $\sup$ and $\inf$ over arbitrary subsets (with the usual convention for empty sets) are always well-defined in $\ExtR$ in the straightforward way.
For instance, $-\infty \le a \le +\infty$ holds for all $a \in \ExtR$.
Addition and multiplication in these sets are defined in a natural way.
For instance, $+\infty + a = +\infty$ for $a \in \extR$, $\pm \infty \cdot a = \pm \infty$ for $a > 0$, $\pm \infty \cdot a = \mp \infty$ for $a < 0$.
Subtraction is defined via $a - b \coloneqq a + (-1) \cdot b$.
The expressions $\pm \infty + (\mp \infty)$ and $0 \cdot \pm \infty$, however, are not defined.
Therefore, arithmetic computations in $\ExtR$ have to be performed with some care.
In particular, the addition of $c \in \ExtR$ to both sides of an equality or inequality is an equivalence transformation only when $c \in \R$.
For $c = \pm \infty$, the expression $a + c = b + c$ is either undefined or always true, even when $a \neq b$.

Checking all cases, it can be verified that the three inequalities $a \le b + c$, $a - b \le c$, and $a - c \le b$ are equivalent whenever all expressions are defined.
In case one of the sums is undefined, each of the other two inequality either also contains an undefined term, or it is of the form $\pm \infty \le \pm \infty$ and thus true with equality.

Throughout the paper, $\cM$ is a non-empty set and
\begin{align*}
	\ExtF{\cM}
	&
	\coloneqq
	\set{f \colon \cM \to \ExtR}
	,
	\\
	\extF{\cM}
	&
	\coloneqq
	\set{f \colon \cM \to \extR}
	,
	\\
	\realF(\cM)
	&
	\coloneqq
	\set{f \colon \cM \to \R}
\end{align*}
are sets of (extended) real-valued functions.
Arithmetic operations on $\ExtF{\cM}$ are defined pointwise.
Given $f \in \ExtF{\cM}$, $\alpha f$ is defined for any $\alpha \in \R \setminus \set{0}$.
Moreover, $f + g$ is defined for any $f \in \ExtF{\cM}$ and $g \in \realF(\cM)$, as well as for any $f, g \in \extF{\cM}$, but not in general for $f, g \in \ExtF{\cM}$.
However, the pointwise $\max \set{f, g}$ and $\min \set{f, g}$ are always defined.

To take the difficulty of addition into account, we introduce a domain of definition for sums (and similarly differences) of functions
\begin{equation}
	\label{eq:domain}
	\domain{f + g}
	\coloneqq
	\setDef{x \in \cM}{f(x) + g(x) \text{ is defined}}
	.
\end{equation}
In particular, $\domain{f + g} = \cM$ holds whenever $f \in \realF(\cM)$ or $g \in \realF(\cM)$, or when $f, g \in \extF{\cM}$.
We observe that $\domain{f - f} = \setDef{x \in \cM}{f(x) \in \R}$ is the domain of~$f$.
We also require the restricted domain
\begin{equation}
	\label{eq:restricted-domain}
	\rdomain{f + g}
	\coloneqq
	\setDef{x \in \cM}{f(x) \neq -\infty \text{ and } g(x) \neq -\infty}
	\subseteq
	\domain{f + g}
	.
\end{equation}
This definition excludes points where $f(x) + g(x) = -\infty$ from the domain $\domain{f + g}$.
For $f \in \realF(\cM)$, we introduce
\begin{equation}
	\label{eq:infinity-distance}
	\norm{f}_\infty
	\coloneqq
	\sup_{x \in \cM} \abs{f(x)} \in \interval[normal][]{0}{+\infty}
	.
\end{equation}
It is evident that $\extF{\cM}$ is a convex cone and $\realF(\cM)$ is a linear space.
We introduce the natural partial ordering on $\ExtF{\cM}$ by
\begin{align}
	f \le g
	\quad
	&
	\logeq
	\quad
	f(x) \le g(x)
	\text{ for all }
	x \in \cM
	\notag
	\\
	&
	\mrep[r]{{}\Leftrightarrow{}}{{}\logeq{}}
	\quad
	f(x) - g(x) \le 0
	\text{ for all }
	x \in \domain{f - g}
	.
	\label{eq:partial-ordering-of-functions}
\end{align}

Throughout the paper, we reserve the term \emph{function} to denote mappings with values in the (extended) real numbers.
We denote by $\indicator{\cU}$ the indicator function of a set $\cU \subseteq \cM$, which is zero on $\cU$ and is $+\infty$ otherwise.
Moreover, $\epi f \coloneqq \setDef{(x,c) \in \cM \times \R}{c \ge f(x)}$ represents the epigraph of $f \in \ExtF{\cM}$.
The restriction of any map $A \colon \cM \to \cN$ between any two sets $\cM$, $\cN$, to a subset $\cM_0 \subseteq \cM$, is denoted by $\restr{A}{\cM_0}$.

\section{Nonlinear Fenchel Conjugate on General Sets}
\label{section:nonlinear-Fenchel-conjugate}

With the preliminaries from \cref{section:preliminaries} in place, we define a generalized Fenchel conjugate for extended real-valued functions~$f$ defined on an arbitrary, nonempty set~$\cM$ as follows:
\begin{definition}
	\label{definition:general-nonlinear-Fenchel-conjugate}
	The nonlinear Fenchel conjugate of $f \in \ExtF{\cM}$ is defined as
	\begin{align}
		\genconj{f}
		\colon
		\ExtF{\cM}
		&
		\to
		\ExtR
		\notag
		\\
		\varphi
		&
		\mapsto
		\genconj{f}(\varphi)
		\coloneqq
		\sup \setDef{\varphi(x) - f(x)}{x \in \domain{\varphi - f}}
		.
		\label{eq:general-nonlinear-Fenchel-conjugate}
	\end{align}
\end{definition}
As usual, the supremum in \eqref{eq:general-nonlinear-Fenchel-conjugate} is taken to be $-\infty$ should $\domain{\varphi - f}$ happen to be the empty set.

Before we proceed, we provide three alternative formulations for \eqref{eq:general-nonlinear-Fenchel-conjugate}.
The short proof of their equivalence is given in \cref{section:equivalence-of-alternative-formulations-of-Fenchel-conjugate}.
One alternative way to express \eqref{eq:general-nonlinear-Fenchel-conjugate} is as follows:
\begin{equation}
	\label{eq:general-nonlinear-Fenchel-conjugate:alternative:1}
	\begin{aligned}
		\genconj{f}(\varphi)
		&
		=
		\sup \setDef{\varphi(x) - f(x)}{x \in \rdomain{\varphi - f}}
		\\
		&
		=
		\sup \setDef{\varphi(x) - f(x)}{x \in \cM, \; \varphi(x) \neq -\infty \text{ and } f(x) \neq +\infty}
		.
	\end{aligned}
\end{equation}
Second, in more geometric terms, we also have
\begin{equation}
	\label{eq:general-nonlinear-Fenchel-conjugate:alternative:2}
	\genconj{f}(\varphi)
	=
	\inf \setDef{c \in \R}{\varphi(x) \le f(x)+c \text{ for all } x \in \cM}
	.
\end{equation}
Third, in case we wish to dispense with a domain of definition $\domain{\varphi - f}$ depending on $\varphi$ and $f$, we may write equivalently
\begin{equation}
	\label{eq:general-nonlinear-Fenchel-conjugate:alternative:3}
	\genconj{f}(\varphi)
	=
	\sup \setDef{\varphi(x) - c}{(x,c) \in \epi f}
	.
\end{equation}

\begin{example}
	\label{example:indicator-function}
	The conjugate of the indicator function of a set $\cU \subseteq \cM$ is given by
	\begin{equation*}
		\genconj{\indicator{\cU}}(\varphi)
		=
		\sup \setDef{\varphi(x) - \indicator{\cU}(x)}{x \in \domain{\varphi - \indicator{\cU}}}
		=
		\sup \setDef{\varphi(x)}{x \in \cU}
		.
	\end{equation*}
\end{example}

Compared to the classical Fenchel conjugate~$\conjugate{f}$ that acts on \emph{linear} functions $\cM \to \R$, where $\cM$ is a linear space, the domain $\ExtF{\cM}$ of $\genconj{f}$ is much larger, containing \emph{arbitrary} functions $\cM \to \ExtR$.
The evaluation of the nonlinear Fenchel conjugate in closed form is thus, in general, illusory.
On the other hand, our approach allows for the definition of a Fenchel conjugate also for functions defined on sets $\cM$ that are more general than linear spaces.
As we will see below, most of the well-known properties of classical Fenchel conjugates carry over to the extended domain of nonlinear functions and, moreover, additional simple and useful properties, including some symmetry, become visible.

We re-iterate that in case $\cM$ carries additional algebraic or topological structure and thus admits the definition of more regular functions, we may consider the restriction of the domain of $\genconj{f}$ to an appropriate subset.
Important examples include $\cM$ being a topological space, a smooth manifold, a group or Lie group, or a linear space.
Notably, when $\cM$ is a linear space with algebraic dual~$\algebraicdualSpace{\cM}$, then $\conjugate{f} \coloneqq \restr{\genconj{f}}{\algebraicdualSpace{\cM}}$ is the classical Fenchel conjugate.

\subsection{Algebraic Properties}
\label{subsection:algebraic-properties}

In this section we state and prove some elementary properties of the nonlinear Fenchel conjugate.
\begin{proposition}
	\label{proposition:elementary-properties}
	The following statements hold for any $f, \varphi \in \ExtF{\cM}$.
	\begin{enumerate}
		\item \label[statement]{item:elementary-properties:1}
			The following are equivalent:
			\begin{enumerate}
				\item
					$\genconj{f}(\varphi) = -\infty$.

				\item
					$(\varphi - f)(x) = -\infty$ for all $x \in \domain{\varphi - f}$.

				\item
					$\rdomain{\varphi - f} = \emptyset$

				\item
					$\max \set{-\varphi, f} \equiv +\infty$ on~$\domain{\varphi - f}$.

				\item
					$\max \set{-\varphi, f} \equiv +\infty$ on~$\cM$.
			\end{enumerate}

		\item \label[statement]{item:elementary-properties:2}
			The following are equivalent:
			\begin{enumerate}
				\item
					$\genconj{f}(f) = 0$.

				\item
					$\domain{f - f} \neq \emptyset$.

				\item
					$f(x) \in \R$ for some $x \in \cM$.
			\end{enumerate}
			When these fail to hold, then $\genconj{f}(f) = -\infty$.

		\item \label[statement]{item:elementary-properties:3}
			Moreover, the following symmetry and monotonicity relations hold:
			\begin{subequations}
				\label{eq:elementary-properties}
				\begin{align}
					\genconj{(-f)}(\varphi)
					&
					=
					\genconj{(-\varphi)}(f)
					,
					\label{eq:elementary-properties:1}
					\\
					\genconj f(-\varphi)
					&
					=
					\genconj{\varphi}(-f)
					,
					\label{eq:elementary-properties:2}
					\\
					\varphi
					\le
					f
					\quad
					&
					\Leftrightarrow
					\quad
					\genconj{f}(\varphi)
					\le
					\genconj{f}(f)
					\quad
					\Leftrightarrow
					\quad
					\genconj{f}(\varphi)
					\le
					0
					.
					\label{eq:elementary-properties:3}
				\end{align}
			\end{subequations}
	\end{enumerate}
\end{proposition}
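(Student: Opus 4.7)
The plan is to dispatch each of the three items by translating every claim into a pointwise statement about the values $\varphi(x)$ and $f(x)$, using the alternative formulation~\eqref{eq:general-nonlinear-Fenchel-conjugate:alternative:1} as the main bridge to the definition.

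For \cref{item:elementary-properties:1}, I would first use~\eqref{eq:general-nonlinear-Fenchel-conjugate:alternative:1}, which writes $\genconj{f}(\varphi)$ as a supremum over $\rdomain{\varphi - f}$, to conclude that (a) holds if and only if $\rdomain{\varphi - f} = \emptyset$; this gives the equivalence (a)~$\Leftrightarrow$~(c). Next I would note that, by definition, $x \in \rdomain{\varphi - f}$ means $\varphi(x) \neq -\infty$ and $f(x) \neq +\infty$, which is exactly $\max\set{-\varphi(x), f(x)} < +\infty$; this identifies (c) with (e). Since $\rdomain{\varphi - f} \subseteq \domain{\varphi - f}$, restricting the condition in (e) to $\domain{\varphi - f}$ loses nothing, so (e)~$\Leftrightarrow$~(d); here one checks separately that at the few points in $\cM \setminus \domain{\varphi - f}$ one of $\varphi = +\infty, f = +\infty$ or $\varphi = -\infty, f = -\infty$ already forces $\max\set{-\varphi, f} = +\infty$ automatically. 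Finally, (b)~$\Leftrightarrow$~(a) is a direct reading of the original definition~\eqref{eq:general-nonlinear-Fenchel-conjugate}: a supremum of extended reals equals $-\infty$ iff every entry equals $-\infty$ (vacuously true if the index set is empty).

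For \cref{item:elementary-properties:2}, the key observation is $\domain{f - f} = \setDef{x \in \cM}{f(x) \in \R}$, because $(\pm\infty) - (\pm\infty)$ is undefined. Thus (b)~$\Leftrightarrow$~(c) is immediate. On this domain one has $(f-f)(x) = 0$ identically, so the supremum equals $0$ when the domain is non-empty and $-\infty$ otherwise (by the convention on empty suprema), giving (a)~$\Leftrightarrow$~(b) together with the final sentence.

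For \cref{item:elementary-properties:3}, the symmetry relations \eqref{eq:elementary-properties:1} and \eqref{eq:elementary-properties:2} follow immediately from the commutativity $\varphi(x) + f(x) = f(x) + \varphi(x)$ of the expressions appearing inside the supremum, together with the obvious equality of the corresponding domains $\domain{\varphi + f} = \domain{f + \varphi}$ and $\domain{-\varphi - f} = \domain{-f - \varphi}$. For the monotonicity chain, the equivalence $\varphi \le f \Leftrightarrow \genconj{f}(\varphi) \le 0$ reads off directly from the characterization of the pointwise order in~\eqref{eq:partial-ordering-of-functions} combined with the definition~\eqref{eq:general-nonlinear-Fenchel-conjugate} of $\genconj{f}(\varphi)$ as a supremum of values $\varphi(x) - f(x)$ over $\domain{\varphi - f}$. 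The remaining equivalence with $\genconj{f}(\varphi) \le \genconj{f}(f)$ requires a case split using \cref{item:elementary-properties:2}: if $f$ attains a real value somewhere then $\genconj{f}(f) = 0$ and the claim is immediate; otherwise $f$ takes only values in $\set{-\infty, +\infty}$, and I would argue that $\varphi(x) - f(x) \in \set{-\infty, +\infty}$ for every $x \in \domain{\varphi - f}$, so that $\genconj{f}(\varphi) \in \set{-\infty, +\infty}$, and hence $\genconj{f}(\varphi) \le 0$ already forces $\genconj{f}(\varphi) = -\infty = \genconj{f}(f)$.

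The main obstacle throughout is purely bookkeeping: keeping careful track of which arithmetic expressions are defined and which set of indices ($\cM$, $\domain{\varphi - f}$, or $\rdomain{\varphi - f}$) is the correct one at each step. There is no conceptual difficulty beyond this; the proof is really an exercise in checking that the various natural reformulations of "the supremum is $-\infty$'' and of "$\varphi \le f$'' coincide once one threads the extended-real arithmetic conventions from \cref{section:preliminaries}.
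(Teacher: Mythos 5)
Your proof is correct and takes the route the paper itself intends: the paper's own ``proof'' is just the one-line remark that all results are direct consequences of the definition, and your pointwise case analysis (in particular the check that $\max\set{-\varphi,f}=+\infty$ holds automatically at points outside $\domain{\varphi-f}$, and the degenerate case $\genconj{f}(f)=-\infty$ in the last equivalence of \eqref{eq:elementary-properties:3}) supplies exactly the bookkeeping that remark leaves implicit. No gaps.
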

\begin{proof}
	All results are direct consequences of the definition.
\end{proof}

The Fenchel-Young inequality $x^*(x) \le f(x) + \conjugate{f}(x^*)$, which is valid for $f \in \ExtF{V}$ on some linear space~$V$ and $x^* \in \algebraicdualSpace{V}$, is at the heart of classical Fenchel duality theory.
It carries over to the nonlinear case, as long as some care is taken that both sides of the inequality are well-defined.
\begin{theorem}[Fenchel-Young inequality]%
	\label{theorem:Fenchel-conjugate:nonlinear-space:Fenchel-Young-inequality}
	Suppose that $f, \varphi \in \ExtF{\cM}$ and $x \in \cM$.
	\begin{enumerate}
		\item
			The Fenchel-Young inequalities
			\begin{subequations}%
				\label{eq:FYI}
				\begin{align}
					\label{eq:FYI:1}
					\genconj{f}(\varphi)
					&
					\ge
					\varphi(x) - f(x)
					\\
					\label{eq:FYI:2}
					f(x)
					&
					\ge
					\varphi(x) - \genconj{f}(\varphi)
					\\
					\label{eq:FYI:3}
					\varphi(x)
					&
					\le
					f(x) + \genconj{f}(\varphi)
				\end{align}
			\end{subequations}
			hold, provided that the respective right-hand side is defined in $\ExtR$.

		\item
			The following two assertions are equivalent:
			\begin{subequations}%
				\label{eq:FYE:1}
				\begin{align}%
					\label{eq:FYE:1:1}
					\genconj{f}(\varphi)
					&
					=
					\varphi(x) - f(x)
					\\
					\label{eq:FYE:1:2:primal-minimization}
					y
					&
					\mapsto
					\varphi(y) - f(y)
					\text{ attains its supremum }
					\genconj{f}(\varphi)
					\text{ over }
					\domain{\varphi - f}
					\text{ at }
					x
					.
				\end{align}
			\end{subequations}

		\item
			The following assertions are equivalent:
			\begin{subequations}%
				\label{eq:FYE:2}
				\begin{align}
					\label{eq:FYE:2:1}
					f(x)
					&
					 =
					\varphi(x)-\genconj{f}(\varphi)
					\\
					\label{eq:FYE:2:2:dual-minimization}
					\psi
					&
					\mapsto \psi(x) - \genconj{f}(\psi)
					\text{ attains its supremum }
					f(x)
					\text{ over }
					\domain{\psi - \genconj{f}}
					\text{ at }
					\varphi
					.
				\end{align}
			\end{subequations}
	\end{enumerate}
\end{theorem}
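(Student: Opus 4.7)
The plan is to derive all three parts directly from the defining supremum in \eqref{eq:general-nonlinear-Fenchel-conjugate}, combined with the observation recalled in \cref{section:preliminaries} that the three rearrangements $a \le b + c$, $a - b \le c$, and $a - c \le b$ are equivalent whenever all expressions are defined, and otherwise degenerate to trivially true statements of the form $\pm\infty \le \pm\infty$.

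For Part~(1), I would first establish \eqref{eq:FYI:1}: if its right-hand side $\varphi(x) - f(x)$ is defined in $\ExtR$, then by construction $x \in \domain{\varphi - f}$, and so $\genconj{f}(\varphi) \ge \varphi(x) - f(x)$ follows immediately from \eqref{eq:general-nonlinear-Fenchel-conjugate}. For \eqref{eq:FYI:2} and \eqref{eq:FYI:3}, I would then set $a = \varphi(x)$, $b = f(x)$, $c = \genconj{f}(\varphi)$, so that the three inequalities correspond precisely to the rearrangements $a - b \le c$, $a - c \le b$, and $a \le b + c$. The only subtlety is the case in which the right-hand side of \eqref{eq:FYI:2} or \eqref{eq:FYI:3} is undefined; then either \eqref{eq:FYI:1} is also undefined (and the assertion is vacuous by hypothesis) or the inequality collapses to a trivial identity, as guaranteed by the preliminary discussion.

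For Part~(2), the equivalence is essentially an unfolding of the supremum. If \eqref{eq:FYE:1:1} holds, then $\varphi(x) - f(x) = \genconj{f}(\varphi) \in \ExtR$ must be defined, hence $x \in \domain{\varphi - f}$ and the supremum in \eqref{eq:general-nonlinear-Fenchel-conjugate} is attained at $x$, yielding \eqref{eq:FYE:1:2:primal-minimization}. The converse is immediate from the meaning of attainment.

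For Part~(3), I would proceed in the same spirit, with \eqref{eq:FYI:2} now playing the role of the bounding Fenchel-Young inequality. Applied with $\psi$ in place of $\varphi$, it yields $f(x) \ge \psi(x) - \genconj{f}(\psi)$ whenever the right-hand side is defined, so that $f(x)$ is an upper bound for the supremum appearing in \eqref{eq:FYE:2:2:dual-minimization}. Equation~\eqref{eq:FYE:2:1} then states precisely that this upper bound is attained at $\psi = \varphi$, and conversely, attainment at $\varphi$ with value $f(x)$ is exactly \eqref{eq:FYE:2:1}. The main obstacle throughout is the bookkeeping of where sums and differences in $\ExtR$ are defined; this is, however, entirely absorbed by the arithmetic conventions and the equivalence principle from \cref{section:preliminaries}, so the proof itself reduces to a short invocation of \eqref{eq:general-nonlinear-Fenchel-conjugate} together with these preliminary facts.
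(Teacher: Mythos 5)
Your proposal is correct and follows essentially the same route as the paper: \eqref{eq:FYI:1} is read off directly from the defining supremum in \eqref{eq:general-nonlinear-Fenchel-conjugate}, \eqref{eq:FYI:2} and \eqref{eq:FYI:3} follow via the equivalence of the three rearrangements recorded in \cref{section:preliminaries}, with the degenerate case $\varphi(x) = f(x) = \pm\infty$ (where \eqref{eq:FYI:1} is unavailable but \eqref{eq:FYI:2} or \eqref{eq:FYI:3} may still be defined) absorbed by that same principle, and parts (2) and (3) are unfoldings of the definition combined with the Fenchel--Young bound. No gaps.
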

In~\eqref{eq:FYE:1}, the function $\varphi - f$ is only defined on $\domain{\varphi - f}$.
At points where $\varphi - f$ is undefined, \eqref{eq:FYE:1:1} cannot hold, so these points are excluded from the maximization in~\eqref{eq:FYE:1:2:primal-minimization}.
Similarly, in~\eqref{eq:FYE:2}, $\cdot \, (x) - \genconj{f}$ is only defined on $\domain{\psi - \genconj{f}}$.
\begin{proof}
	The assertions can be verified by \cref{definition:general-nonlinear-Fenchel-conjugate} of the Fenchel conjugate.
	For instance,~\eqref{eq:FYI:1} follows directly from the definition of $\genconj{f}$ as a supremum, with the right-hand side defined for $x \in \domain{\varphi - f}$.
	Then~\eqref{eq:FYI:2} and~\eqref{eq:FYI:3} (where defined) follow from~\eqref{eq:FYI:1} by simple arithmetic manipulations.
	For the remaining two cases, \ie, $\varphi(x) = f(x) = \pm \infty$,~\eqref{eq:FYI:2} and~\eqref{eq:FYI:3} hold as equalities of the form $\pm \infty = \pm \infty$, provided that the corresponding right-hand sides are defined.
\end{proof}

Next we investigate how the nonlinear Fenchel conjugate behaves under transformations:
\begin{proposition}
	Suppose that $f, g \in \ExtF{\cM}$.
	\begin{enumerate}
		\item
			For $\alpha > 0$ and $\beta \in \R$,
			\begin{equation}
				\label{eq:FDconstant}
				\alpha \genconj{f}(\varphi) + \beta
				 =
				\genconj{(\alpha f)}(\alpha \, \varphi + \beta)
				 =
				\genconj{(\alpha f-\beta)}(\alpha \, \varphi)
				.
			\end{equation}

		\item
			If $\domain{f - \psi} = \domain{\varphi + \psi} = \cM$, then
			\begin{equation}
				\label{eq:FDSum1}
				\genconj{(f - \psi)}(\varphi)
				 =
				\genconj{f}(\varphi+\psi)
				.
			\end{equation}

		\item
			If $\domain{f + g} = \domain{\varphi + \psi} = \cM$ and $\genconj{f}(\varphi) + \genconj{g}(\psi)$ is defined, then
			\begin{equation}
				\label{eq:FDSum2}
				\genconj{(f+g)}(\varphi+\psi)
				\le
				\genconj{f}(\varphi) + \genconj{g}(\psi)
				.
			\end{equation}
	\end{enumerate}
\end{proposition}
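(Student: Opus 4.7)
The plan is to derive each of the three identities directly from the supremum definition~\eqref{eq:general-nonlinear-Fenchel-conjugate}, treating the domain-of-definition subtleties introduced in \cref{section:preliminaries} with care. In all three parts the strategy is the same: rewrite the probing expression on the right-hand side as (a transformation of) the probing expression on the left-hand side, verify that the two domains of definition coincide (or, for (iii), that the correct inclusion holds), and then commute the transformation past the supremum.

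For part (i), I would begin with the observation that for any $\alpha > 0$, $\beta \in \R$ and any $x \in \cM$ the two values $\alpha\varphi(x) + \beta - \alpha f(x)$ and $\alpha\varphi(x) - (\alpha f(x) - \beta)$ both equal $\alpha(\varphi(x) - f(x)) + \beta$ \emph{whenever} $\varphi(x) - f(x)$ is defined, and moreover that the three sets $\domain{\varphi - f}$, $\domain{(\alpha\varphi + \beta) - \alpha f}$ and $\domain{\alpha\varphi - (\alpha f - \beta)}$ all coincide (adding a finite constant~$\beta$ or multiplying by $\alpha > 0$ does not create or remove $\pm\infty$ values). Using the standard rule that $\alpha \sup S + \beta = \sup(\alpha S + \beta)$ in~$\ExtR$ for $\alpha > 0$, and the convention $\alpha \cdot (-\infty) + \beta = -\infty$ corresponding to the case of an empty domain, identity~\eqref{eq:FDconstant} follows.

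For part (ii), the hypothesis $\domain{f - \psi} = \cM$ says that $f$ and $\psi$ never take opposite infinities, and $\domain{\varphi + \psi} = \cM$ says the same for $\varphi$ and $\psi$. Under these assumptions I would verify pointwise that $\varphi(x) - (f(x) - \psi(x)) = (\varphi(x) + \psi(x)) - f(x)$ on $\domain{\varphi - (f - \psi)}$, that this latter set equals $\domain{(\varphi + \psi) - f}$, and hence that the two suprema in~\eqref{eq:FDSum1} are taken of the same expression over the same set. Concretely one checks the three-case arithmetic in $\ExtR$ (values in $\R$, $+\infty$, $-\infty$), using that the extra hypotheses rule out the problematic mixed-infinity combinations.

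For part (iii), I would use the Fenchel--Young inequality~\eqref{eq:FYI:3} from \cref{theorem:Fenchel-conjugate:nonlinear-space:Fenchel-Young-inequality}: for every $x \in \cM$,
\begin{equation*}
  \varphi(x) \le f(x) + \genconj{f}(\varphi)
  \quad\text{and}\quad
  \psi(x) \le g(x) + \genconj{g}(\psi),
\end{equation*}
whenever the respective right-hand sides are defined in~$\ExtR$. Since $\genconj{f}(\varphi) + \genconj{g}(\psi)$ is assumed to be defined and $\domain{f + g} = \domain{\varphi + \psi} = \cM$, adding the two inequalities yields
\begin{equation*}
  (\varphi + \psi)(x) \le (f + g)(x) + \genconj{f}(\varphi) + \genconj{g}(\psi)
\end{equation*}
at every $x \in \cM$ where the right-hand side is defined; by~\eqref{eq:general-nonlinear-Fenchel-conjugate:alternative:2} taking the infimum over such admissible constants (or equivalently the supremum in $x$) produces~\eqref{eq:FDSum2}.

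The main obstacle throughout is not the algebraic content but the bookkeeping of $\pm\infty$: one must check that each transformation preserves the domain over which the supremum is taken, and that sums of the form $\pm\infty + (\mp\infty)$ do not sneak in. The hypotheses $\domain{\,\cdot\,} = \cM$ in (ii) and (iii) and the defined-ness hypothesis on $\genconj{f}(\varphi) + \genconj{g}(\psi)$ in (iii) are tailored precisely to exclude these cases, and the equivalent reformulation~\eqref{eq:general-nonlinear-Fenchel-conjugate:alternative:2} in (iii) is a convenient device to avoid taking a supremum of a possibly ill-defined expression.
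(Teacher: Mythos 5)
Your parts (i) and (ii) follow the paper's proof essentially verbatim: rewrite the probing expression pointwise and check that the domains of definition coincide (the paper phrases the domain check for (ii) in terms of the restricted domains $\rdomain{\cdot}$ of \eqref{eq:restricted-domain} rather than $\domain{\cdot}$, but the content is the same). For part (iii) you take a genuinely different route. The paper bounds the supremum directly: on $\rdomain{(\varphi+\psi)-(f+g)}$ one has $(\varphi+\psi)(x)-(f+g)(x) = (\varphi(x)-f(x)) + (\psi(x)-g(x))$, and since this set is contained in both $\rdomain{\varphi-f}$ and $\rdomain{\psi-g}$, the elementary estimate $\sup(u+v)\le\sup u+\sup v$ gives the claim. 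You instead add the two Fenchel--Young inequalities \eqref{eq:FYI:3} and invoke the characterization \eqref{eq:general-nonlinear-Fenchel-conjugate:alternative:2}. This is correct and arguably more conceptual, but it is clean only when $\genconj{f}(\varphi)+\genconj{g}(\psi)\in\R$, so that both right-hand sides in \eqref{eq:FYI:3} are defined at every $x$ and \eqref{eq:general-nonlinear-Fenchel-conjugate:alternative:2}, which quantifies over real constants $c$ only, applies directly. The case $\genconj{f}(\varphi)+\genconj{g}(\psi)=+\infty$ is trivial, but the case $=-\infty$ needs a separate word that your sketch omits: there \cref{proposition:elementary-properties}~\ref{item:elementary-properties:1} gives, say, $\max\set{-\varphi,f}\equiv+\infty$, and using $\domain{\varphi+\psi}=\domain{f+g}=\cM$ one checks that then $\max\set{-(\varphi+\psi),f+g}\equiv+\infty$ as well, so both sides of \eqref{eq:FDSum2} equal $-\infty$. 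The paper's inclusion-of-restricted-domains argument absorbs this degenerate case automatically (the left-hand supremum is then taken over the empty set), which is what the direct estimate buys over the Fenchel--Young route.
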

\begin{proof}
	\Cref{eq:FDconstant} follows from
	\begin{equation*}
		\begin{aligned}
			\alpha \, (\sup [\varphi(x) - f(x)]) + \beta
			&
			=
			\sup [(\alpha \, \varphi(x) + \beta) - \alpha f(x)]
			\\
			&
			=
			\sup [ \alpha \, \varphi(x) - (\alpha f(x) - \beta)]
			,
		\end{aligned}
	\end{equation*}
	where in each case, the $\sup$ extends over $x \in \domain{\varphi - f}$, which is precisely the set where each expression is defined.
	Similarly,~\eqref{eq:FDSum1} follows from
	\begin{equation*}
		\sup [(\varphi + \psi)(x) - f(x)]
		=
		\sup [\varphi(x) - (f - \psi)(x)]
	\end{equation*}
	with the $\sup$ extending over $\rdomain{(\varphi + \psi) - f} = \rdomain{\varphi - (f - \psi)}$; \cf \eqref{eq:restricted-domain}.
	The equality of these restricted domains follows from the assumption $\domain{f - \psi} = \domain{\varphi + \psi} = \cM$.
	Indeed, one verifies that $\varphi(x) = +\infty$ or $f(x) - \psi(x) = -\infty$ holds if and only if $\varphi(x) + \psi(x) = +\infty$ or $f(x) = -\infty$.
	Finally, \eqref{eq:FDSum2} is obtained from
	\begin{equation*}
		\sup [(\varphi + \psi)(x) - (f + g)(x)]
		\le
		(\sup [\varphi(x) - f(x)])
		+
		(\sup [\psi(x) - g(x)])
		,
	\end{equation*}
	with the $\sup$ extending over $\rdomain{(\varphi + \psi) - (f + g)}$ and its supersets $\rdomain{\varphi - f}$ and $\rdomain{\psi - g}$, respectively.
\end{proof}

We conclude this section with results on monotonicity, Lipschitz continuity and convexity of the nonlinear Fenchel conjugate:
\begin{proposition}
	\label{proposition:monotonicity-Lipschitz-continuity-convexity}
	Suppose that $f, g \in \ExtF{\cM}$.
	\begin{enumerate}
		\item \label[statement]{item:monotonicity-Lipschitz-continuity-convexity:1}
			$\varphi \ge \psi$ and $f \le g$ implies $\genconj{f}(\varphi) \ge \genconj{g}(\psi)$.
			In particular, $\genconj{f}$ is monotone increasing.

		\item \label[statement]{item:monotonicity-Lipschitz-continuity-convexity:2}
			$\genconj{f}$ satisfies the following Lipschitz estimate for $\varphi,\psi \in \realF(\cM)$:
			\begin{equation*}
				\abs{\genconj{f}(\varphi) - \genconj{f}(\psi)}
				\le
				\norm{\varphi - \psi}_\infty
				.
			\end{equation*}

		\item \label[statement]{item:monotonicity-Lipschitz-continuity-convexity:3}
			$\genconj{f}$ is convex on $\extF{\cM}$.
	\end{enumerate}
\end{proposition}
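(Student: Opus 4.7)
The plan is to verify each of the three items directly from \cref{definition:general-nonlinear-Fenchel-conjugate}, drawing on earlier results where convenient.

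For \cref{item:monotonicity-Lipschitz-continuity-convexity:1}, I would use the epigraph formulation \eqref{eq:general-nonlinear-Fenchel-conjugate:alternative:3}. The hypothesis $f \le g$ translates into $\epi g \subseteq \epi f$, while $\varphi \ge \psi$ gives $\varphi(x) - c \ge \psi(x) - c$ for every $(x,c) \in \cM \times \R$. A two-step comparison, first enlarging the epigraph from $\epi g$ to $\epi f$ and then replacing $\psi$ by the pointwise larger $\varphi$ in the supremum, yields $\genconj{f}(\varphi) \ge \genconj{g}(\psi)$. Specializing to $g = f$ recovers the monotonicity of the single map $\genconj{f}$.

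For \cref{item:monotonicity-Lipschitz-continuity-convexity:2}, I would first dispose of the case $\norm{\varphi - \psi}_\infty = +\infty$, in which the estimate is vacuous. In the remaining case we have $\varphi \le \psi + \norm{\varphi - \psi}_\infty$ pointwise, so combining \cref{item:monotonicity-Lipschitz-continuity-convexity:1} with the additive shift identity \eqref{eq:FDconstant} (taking $\alpha = 1$ and $\beta = \norm{\varphi - \psi}_\infty \in \R$) yields $\genconj{f}(\varphi) \le \genconj{f}(\psi) + \norm{\varphi - \psi}_\infty$. Swapping the roles of $\varphi$ and $\psi$ provides the reverse inequality, and the two combine into the asserted Lipschitz bound.

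For \cref{item:monotonicity-Lipschitz-continuity-convexity:3}, my approach is a direct pointwise estimate followed by a supremum. Fix $\varphi, \psi \in \extF{\cM}$ and $\lambda \in [0,1]$; the endpoint cases $\lambda \in \set{0,1}$ are trivial, so assume $\lambda \in (0,1)$. Since neither $\varphi$ nor $\psi$ attains $-\infty$, the convex combination $\lambda \varphi + (1-\lambda)\psi$ again lies in $\extF{\cM}$ and, for every $x \in \cM$ with $f(x) \neq +\infty$, the identity
\begin{equation*}
	\paren[big](){\lambda \varphi + (1-\lambda)\psi}(x) - f(x)
	=
	\lambda \paren[big](){\varphi(x) - f(x)} + (1-\lambda) \paren[big](){\psi(x) - f(x)}
\end{equation*}
holds in $\ExtR$. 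Bounding each summand on the right via the Fenchel-Young inequality \eqref{eq:FYI:1} by $\genconj{f}(\varphi)$ and $\genconj{f}(\psi)$ respectively, and then taking the supremum over admissible $x$, yields the convexity inequality. A short preliminary remark, based on \cref{item:elementary-properties:1}, observes that for $\varphi, \psi \in \extF{\cM}$ the value $\genconj{f}(\varphi) = -\infty$ forces $f \equiv +\infty$, so $\genconj{f}(\psi) = -\infty$ as well; this rules out the forbidden combination $\lambda(-\infty) + (1-\lambda)(+\infty)$ on the right-hand side.

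The principal technical obstacle throughout is the bookkeeping of extended real arithmetic, in particular ensuring that no expression of the form $+\infty - (+\infty)$ arises when rearranging sums and differences in the convexity step. The restriction $\varphi, \psi \in \extF{\cM}$ in \cref{item:monotonicity-Lipschitz-continuity-convexity:3} is precisely what eliminates these pathologies.
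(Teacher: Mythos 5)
Your proposal is correct, and the bulk of it coincides with the paper's argument: your \cref{item:monotonicity-Lipschitz-continuity-convexity:1} is the same monotone comparison of suprema, merely phrased via the epigraph formulation \eqref{eq:general-nonlinear-Fenchel-conjugate:alternative:3} instead of the restricted domains $\rdomain{\psi - g} \subseteq \rdomain{\varphi - f}$, and your \cref{item:monotonicity-Lipschitz-continuity-convexity:2} (monotonicity plus the shift rule \eqref{eq:FDconstant} applied to $\varphi \le \psi + \norm{\varphi - \psi}_\infty$) is the same estimate the paper obtains by inserting $\pm\psi$ inside the supremum. The one genuine divergence is \cref{item:monotonicity-Lipschitz-continuity-convexity:3}: the paper splits $f = \lambda f + (1-\lambda) f$ and invokes the calculus rules \eqref{eq:FDSum2} and \eqref{eq:FDconstant}, whereas you inline the underlying pointwise identity over $\setDef{x}{f(x) \neq +\infty}$ (which, by \eqref{eq:general-nonlinear-Fenchel-conjugate:alternative:1} and $\varphi, \psi \in \extF{\cM}$, is exactly the right index set) and bound each summand by \eqref{eq:FYI:1}. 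Both routes hinge on the same definedness check — that $\genconj{f}(\varphi) = -\infty$ for $\varphi \in \extF{\cM}$ forces $f \equiv +\infty$ and hence $\genconj{f}(\psi) = -\infty$, via \cref{proposition:elementary-properties}~\ref{item:elementary-properties:1} — which you supply correctly. Your version is more self-contained and makes the extended-arithmetic bookkeeping explicit; the paper's version is shorter on the page because it amortizes that bookkeeping into the previously proved lemmas, which is worth preferring only if those lemmas are reused elsewhere (they are).
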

\begin{proof}
	To prove \cref{item:monotonicity-Lipschitz-continuity-convexity:1}, we first observe that our assumptions imply $\rdomain{\psi - g} \subseteq \rdomain{\varphi - f}$ and also
	\begin{equation*}
		\psi(x) - g(x)
		\le
		\varphi(x) - f(x)
		\quad
		\text{for all }
		x \in \rdomain{\psi - g}
		.
	\end{equation*}
	Hence we find
	\begin{equation*}
		\genconj g(\psi)
		=
		\sup_{x \in \rdomain{\psi - g}} \psi(x) - g(x)
		\le
		\sup_{x \in \rdomain{\varphi - f}} \varphi(x) - f(x)
		=
		\genconj f(\varphi)
		.
	\end{equation*}
	For \cref{item:monotonicity-Lipschitz-continuity-convexity:2}, our assumptions imply $\domain{f-\varphi} = \domain{f-\psi} = \cM$ and thus
	\begin{equation*}
		\begin{aligned}
			\genconj f(\varphi)
			&
			=
			\sup_{x \in \cM} \varphi(x) - f(x)
			\\
			&
			=
			\sup_{x \in \cM} [\psi(x) - f(x)] + [\varphi(x) - \psi(x)]
			\\
			&
			\le
			\genconj f(\psi) + \norm{\varphi - \psi}_\infty
			.
		\end{aligned}
	\end{equation*}
	In a similar way, we conclude $\genconj f(\psi) \le \genconj f(\varphi) + \norm{\varphi - \psi}_\infty$.

	To prove \cref{item:monotonicity-Lipschitz-continuity-convexity:3}, fix a value $\lambda \in \interval(){0}{1}$.
	We seek to verify the prerequisites for \eqref{eq:FDSum2}.
	We first observe that $\domain{f} = \domain{\lambda f + (1-\lambda) f} = \domain{\varphi + \psi} = \cM$ holds, since $\varphi,\psi \in \extF{\cM}$.
	Moreover, $\genconj f(\varphi) + \genconj f(\psi)$ fails to be defined only if $\genconj f(\varphi) = -\infty$ and $\genconj f(\psi) = +\infty$ or vice versa.
	Considering \wolog the former case and taking into account $\varphi \in \extF{\cM}$, \cref{proposition:elementary-properties}~\ref{item:elementary-properties:1} implies $f \equiv +\infty$.
	Consequently, $\genconj f(\psi)$ is also $-\infty$, and we conclude that $\genconj f(\varphi) + \genconj f(\psi)=-\infty$ is defined.
	Indeed, by the same arguments, $\genconj{(\lambda f)}(\lambda \varphi) + \genconj{((1-\lambda) f)} ((1-\lambda) \psi)$ is defined.
	We now conclude
	\begin{align*}{2}
		\genconj f(\lambda \varphi + (1-\lambda) \psi)
		&
		 =
		\genconj{(\lambda f + (1-\lambda) f)}(\lambda \varphi + (1-\lambda) \psi)
		\\
		&
		\le
		\genconj{(\lambda f)}(\lambda \varphi) + \genconj{((1-\lambda) f)} ((1-\lambda) \psi)
		&
		&
		\quad
		\text{by \eqref{eq:FDSum2}}
		\\
		&
		=
		\lambda \genconj f(\varphi) + (1-\lambda) \genconj f(\psi)
		&
		&
		\quad
		\text{by \eqref{eq:FDconstant}}
		.
		\qedhere
	\end{align*}
\end{proof}

\subsection{Nonlinear Dual Maps}%
\label{subsection:nonlinear-dual-maps}

Rules for the conjugates of functions involving duals of linear operators are a central element in classical Fenchel theory.
As we will see, similar rules hold for nonlinear Fenchel conjugates, provided that the notion of dual map is extended appropriately.
For the purpose of this section, suppose that $\cM$ and $\cN$ are two non-empty sets and $A \colon \cM \to \cN$ is some mapping.

\begin{definition}
	\label{definition:dual-map}
	The map
	\begin{align}
		\genadj{A}
		\colon
		\ExtF{\cN}
		&
		\to
		\ExtF{\cM}
		\notag
		\\
		\psi
		&
		\mapsto
		\genadj{A}(\psi)
		\coloneqq
		\psi \circ A
		\label{eq:dual-mapping}
	\end{align}
	is called the dual or adjoint map of~$A$, or the pullback by~$A$.
\end{definition}

Notice that
\begin{equation*}
	\genadj{A}(\alpha \, \psi_1 + \psi_2)
	=
	(\alpha \, \psi_1 + \psi_2) \circ A
	=
	\alpha \, (\psi_1 \circ A) + \psi_2 \circ A
	=
	\alpha \genadj{A}(\psi_1) + \genadj{A}(\psi_2)
\end{equation*}
holds whenever all terms are defined.
In particular, $\genadj{A} \colon \realF(\cN) \to \realF(\cM)$ is a homomorphism of linear spaces.
When $c \colon \cN \to \R$ is a constant function, then $\genadj{A}(c) = c \colon \cM \to \R$ is the mapping to the same constant, implying the formula $\genadj{A}(\varphi + c) = \genadj{A}(\varphi) + c$.

Quite obviously, $\genadj{A}$ is monotone with respect to the ordering \eqref{eq:partial-ordering-of-functions}, since
\begin{equation*}
	\varphi
	\le
	\psi
	\quad
	\Rightarrow
	\quad
	\genadj{A}(\varphi)
	 =
	\varphi \circ A
	\le
	\psi \circ A
	 =
	\genadj{A}(\psi)
\end{equation*}
holds.
When $A$ is surjective, we have equivalence.
When $\cM$ and $\cN$ are linear spaces and $A \colon \cM \to \cN$ is linear, then $\adj{A} \coloneqq \restr{\genadj{A}}{\algebraicdualSpace{\cN}} \colon \algebraicdualSpace{\cN} \to \algebraicdualSpace{\cM}$ is the classical dual operator.

Next we relate the conjugate of a composition with the composition of conjugates.
\begin{proposition}
	\label{proposition:conjugate-of-composition}
	Suppose that $f \in \ExtF{\cM}$ and $A \colon \cM \to \cN$.
	\begin{enumerate}
		\item \label[statement]{item:conjugate-of-composition:1}
			If $A$ is bijective with inverse $A^{-1}$, then
			\begin{equation*}
				\genconj{(f \circ A^{-1})}
				=
				\genconj{f} \circ \genadj{A}
				.
			\end{equation*}

		\item \label[statement]{item:conjugate-of-composition:2}
			More generally, if we define $(f \bullet A^{-1})(y) \coloneqq \inf_{x \in A^{-1}(y)} f(x)$, we obtain
			\begin{equation*}
				\genconj{(f \bullet A^{-1})}
				=
				\genconj{f} \circ \genadj{A}
				.
			\end{equation*}
	\end{enumerate}
\end{proposition}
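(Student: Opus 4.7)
The plan is to observe first that Part~\ref{item:conjugate-of-composition:1} is a direct specialization of Part~\ref{item:conjugate-of-composition:2}: when $A$ is bijective, each preimage $A^{-1}(y)$ is a singleton and $(f \bullet A^{-1})(y) = \inf_{x \in A^{-1}(y)} f(x)$ collapses to $f(A^{-1}(y))$, so $f \bullet A^{-1} = f \circ A^{-1}$ as elements of $\ExtF{\cN}$. It therefore suffices to establish Part~\ref{item:conjugate-of-composition:2}.

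For Part~\ref{item:conjugate-of-composition:2}, I would avoid the supremum form of the conjugate (whose admissible set depends delicately on the $\pm\infty$-patterns of $\psi$ and $f \bullet A^{-1}$) and instead use the inf-representation~\eqref{eq:general-nonlinear-Fenchel-conjugate:alternative:2}, where the running parameter $c$ is already restricted to $\R$. Fix $\psi \in \ExtF{\cN}$ and $c \in \R$. On the $\cN$-side, the constraint $\psi(y) \le (f \bullet A^{-1})(y) + c$ for all $y \in \cN$ unfolds, via the elementary equivalence $\xi \le \inf_S g \Leftrightarrow \xi \le g(x)$ for all $x \in S$ (which is also correct when $S = \emptyset$ since $\inf \emptyset = +\infty$), to
\begin{equation*}
	\psi(y) - c \le f(x) \quad \text{for all } y \in \cN \text{ and all } x \in A^{-1}(y).
\end{equation*}
On the $\cM$-side, the constraint $\genadj{A}(\psi)(x) \le f(x) + c$ for all $x \in \cM$ reads $\psi(A(x)) \le f(x) + c$ for all $x \in \cM$. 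The reparameterization $x \in \cM \leftrightarrow (y, x)$ with $y \coloneqq A(x)$ bijects the universal quantifiers in these two statements, so both conditions cut out the same set of admissible $c \in \R$. Taking the infimum over admissible $c$ then yields $\genconj{(f \bullet A^{-1})}(\psi) = \genconj{f}(\genadj{A}(\psi))$, which is the desired identity.

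The only subtlety to check is the edge case $A^{-1}(y) = \emptyset$: there $(f \bullet A^{-1})(y) = +\infty$ so the $\cN$-side constraint at such~$y$ is vacuous, and correspondingly no $x \in \cM$ gives rise to such a $y$, so no constraint is imposed on the $\cM$-side either. Because $c$ is real, the arithmetic pitfalls of addition in $\ExtR$ flagged in \cref{section:preliminaries} never arise, and I expect the argument to go through without further obstacle.
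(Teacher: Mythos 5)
Your argument is correct, and it takes a genuinely different route from the paper's. The paper proves \cref{item:conjugate-of-composition:2} by working directly with the supremum form \eqref{eq:general-nonlinear-Fenchel-conjugate:alternative:1}: it writes $\genconj{f}(\varphi \circ A)$ as a supremum over $\rdomain{\varphi \circ A - f}$, regroups that supremum along the fibers $A^{-1}(y)$, pulls the inner supremum of $-f$ inside as $-\inf_{x \in A^{-1}(y)} f(x)$, and separately justifies enlarging $y \in \range A$ to $y \in \cN$ via $\inf \emptyset = +\infty$. This requires tracking exactly which points survive in the restricted domain on each side. You instead use the representation \eqref{eq:general-nonlinear-Fenchel-conjugate:alternative:2}, where the only quantity being optimized is a real constant $c$, so that the additions $f(x) + c$ and subtractions $\psi(y) - c$ are always defined and the comparison of the two conjugates reduces to matching two families of inequalities under the bijection $x \leftrightarrow (A(x), x)$. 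What your approach buys is the elimination of all $\rdomain{\cdot}$ bookkeeping and of the separate empty-fiber discussion (which you correctly note is absorbed by $\inf \emptyset = +\infty$ making the constraint vacuous on both sides); what the paper's approach buys is a direct chain of equalities between the suprema themselves, which makes visible where the value $\genconj{(f \bullet A^{-1})}(\varphi)$ is attained. Your reduction of \cref{item:conjugate-of-composition:1} to \cref{item:conjugate-of-composition:2} is the same as the paper's, and all the elementary facts you invoke ($a \le b + c \Leftrightarrow a - c \le b$ for $c \in \R$, and $\xi \le \inf_S g \Leftrightarrow \xi \le g(x)$ for all $x \in S$, including $S = \emptyset$) are available in $\ExtR$ exactly as you use them.
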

\begin{proof}
	\Cref{item:conjugate-of-composition:1} is a special case of \cref{item:conjugate-of-composition:2} so we only address the latter.
	Suppose $\varphi \in \ExtF{\cM}$.
	The result follows from the computation
	\begin{equation*}
		\begin{aligned}
			\MoveEqLeft
			(\genconj{f} \circ \genadj{A})(\varphi)
			=
			\genconj{f}(\varphi \circ A)
			\\
			&
			=
			\sup \setDef[big]{\varphi(A(x)) - f(x)}{x \in \rdomain{\varphi \circ A - f}}
			\quad
			\text{by \eqref{eq:general-nonlinear-Fenchel-conjugate:alternative:1}}
			\\
			&
			=
			\sup \setDef[Big]{\varphi(y) - \inf_{x \in A^{-1}(y)} f(x)}{%
				y \in \range A
				,
				\varphi(y) \neq -\infty
				\text{ and }
				\inf_{x \in A^{-1}(y)} \neq +\infty
			}
			\\
			&
			=
			\sup \setDef[Big]{\varphi(y) - \inf_{x \in A^{-1}(y)} f(x)}{%
				y \in \cN
				,
				\varphi(y) \neq -\infty
				\text{ and }
				\inf_{x \in A^{-1}(y)} \neq +\infty
			}
			\\
			&
			=
			\sup \setDef[big]{\varphi(y) - (f \bullet A^{-1})(y)}{y \in \rdomain{\varphi - f \bullet A^{-1}}}
			\\
			&
			=
			\genconj{(f \bullet A^{-1})}(\varphi)
			.
		\end{aligned}
	\end{equation*}
	The fourth equality holds since $y \not \in \range A$ implies $A^{-1}(y) = \emptyset$ and thus we have $\inf \setDef{f(x)}{x \in A^{-1}(y)} = +\infty$.
	Therefore, we may replace $y \in \range A$ by $y \in \cN$.
\end{proof}

\subsection{Continuity Properties}%
\label{subsection:continuity-properties}

A couple of things can be said about continuity properties of $\genconj{f}$.
In the following we consider the pointwise supremum of functions over a (possibly empty) index set $\Lambda$, $(\sup_{\lambda \in \Lambda} \varphi_\lambda)(x) \coloneqq \sup \setDef{\varphi_\lambda(x)}{\lambda \in \Lambda}$, and analogously pointwise infima, limits of sequences and so on.

\begin{proposition} \hfill
	\label{proposition:continuity-of-generalized-conjugation}
	\begin{enumerate}
		\item \label[statement]{item:continuity-of-generalized-conjugation:1}
			Suppose that $\Lambda$ is an index set and $\setDef{\varphi_\lambda}{\lambda \in \Lambda} \subseteq \ExtF{\cM}$.
			Then
			\begin{align*}
				\varphi
				=
				\sup_{\lambda \in \Lambda} \varphi_\lambda
				\quad
				\Rightarrow
				\quad
				\genconj{f}(\varphi)
				&
				=
				\sup_{\lambda \in \Lambda} \genconj{f}(\varphi_\lambda)
				\\
				\varphi
				=
				\inf_{\lambda \in \Lambda} \varphi_\lambda
				\quad
				\Rightarrow
				\quad
				\genconj{f}(\varphi)
				&
				\le
				\inf_{\lambda \in \Lambda} \genconj{f}(\varphi_\lambda)
				.
			\end{align*}
			Equality holds if, for each $\varepsilon > 0$, there exists $\lambda(\varepsilon)$ such that $\varphi_{\lambda(\varepsilon)} \le \varphi + \varepsilon$.

		\item \label[statement]{item:continuity-of-generalized-conjugation:2}
			For any sequence ${(\varphi_k)}_{k \in \N}$ in $\ExtF{\cM}$, we have
			\begin{equation*}
				\varphi(x)
				=
				\liminf_{k \to \infty} \varphi_k(x)
				\text{ for all }
				x \in \cM
				\quad
				\Rightarrow
				\quad
				\genconj{f}(\varphi)
				\le
				\liminf_{k \to \infty} \genconj{f}(\varphi_k)
				.
			\end{equation*}
			Equality holds if there exists a non-negative real-valued sequence $(\varepsilon_k)_{k \in \N}$ such that $\varphi_k \le \varphi + \varepsilon_k$ with $\liminf_{k \to \infty} \varepsilon_k = 0$.

		\item \label[statement]{item:continuity-of-generalized-conjugation:3}
			In particular, $\genconj{f}$ is lower semi-continuous with respect to pointwise convergence and continuous with respect to pointwise monotone increasing convergence and with respect to uniform convergence.
	\end{enumerate}
\end{proposition}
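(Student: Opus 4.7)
My plan is to combine two ingredients already at hand: the epigraph formulation \eqref{eq:general-nonlinear-Fenchel-conjugate:alternative:3} of $\genconj{f}$, together with the additive shift rule \eqref{eq:FDconstant} and the monotonicity statement in \cref{proposition:monotonicity-Lipschitz-continuity-convexity}~\ref{item:monotonicity-Lipschitz-continuity-convexity:1}. The epigraph formulation is convenient because the coupling constant $c$ ranges over $\R$ only, so subtraction of $c$ commutes freely with suprema and infima over $\lambda$, sidestepping the usual extended-real bookkeeping.

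For the first claim, I would establish the supremum identity by writing
\begin{equation*}
    \genconj{f}(\varphi)
    = \sup_{(x,c) \in \epi f} (\varphi(x) - c)
    = \sup_{(x,c) \in \epi f} \sup_{\lambda \in \Lambda} (\varphi_\lambda(x) - c)
    = \sup_{\lambda \in \Lambda} \genconj{f}(\varphi_\lambda),
\end{equation*}
where the middle step uses $\varphi(x) = \sup_\lambda \varphi_\lambda(x)$ together with the finiteness of $c$. The infimum inequality is immediate from $\inf_\lambda \varphi_\lambda \le \varphi_\mu$ for each $\mu$ combined with monotonicity. For the equality case, the hypothesis $\varphi_{\lambda(\varepsilon)} \le \varphi + \varepsilon$, monotonicity, and \eqref{eq:FDconstant} (with $\alpha = 1$, $\beta = \varepsilon$) yield $\genconj{f}(\varphi_{\lambda(\varepsilon)}) \le \genconj{f}(\varphi) + \varepsilon$; letting $\varepsilon \to 0$ then furnishes the reverse inequality.

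For the second claim, I would rewrite the liminf as a sup of infima, $\varphi = \sup_n \inf_{k \ge n} \varphi_k$, and apply both parts of the previous claim in succession:
\begin{equation*}
    \genconj{f}(\varphi)
    = \sup_n \genconj{f}\bigl(\inf_{k \ge n} \varphi_k\bigr)
    \le \sup_n \inf_{k \ge n} \genconj{f}(\varphi_k)
    = \liminf_{k \to \infty} \genconj{f}(\varphi_k).
\end{equation*}
The equality case is handled just as before: $\varphi_k \le \varphi + \varepsilon_k$ gives $\genconj{f}(\varphi_k) \le \genconj{f}(\varphi) + \varepsilon_k$ via monotonicity and the shift rule, and passing to $\liminf$ finishes the argument.

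The third claim then follows as a direct corollary. Pointwise convergence is the special case in which the liminf coincides with the limit, yielding lower semi-continuity. Monotone increasing convergence is a special case of the sup identity and produces $\genconj{f}(\varphi_k) \nearrow \genconj{f}(\varphi)$. Uniform convergence with $\varepsilon_k \coloneqq \norm{\varphi_k - \varphi}_\infty \to 0$ supplies both $\varphi_k \le \varphi + \varepsilon_k$ and $\varphi \le \varphi_k + \varepsilon_k$, which combined with the shift rule yield full convergence (this step is essentially the Lipschitz estimate of \cref{proposition:monotonicity-Lipschitz-continuity-convexity}~\ref{item:monotonicity-Lipschitz-continuity-convexity:2}). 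The only delicate point I anticipate is verifying that $\genconj{f}(\varphi + \varepsilon) = \genconj{f}(\varphi) + \varepsilon$ remains valid when $\genconj{f}(\varphi) = \pm\infty$, but since $\varepsilon \in \R$ this is covered verbatim by \eqref{eq:FDconstant}.
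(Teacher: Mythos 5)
Your proposal is correct and follows essentially the same route as the paper's proof: the epigraph formulation \eqref{eq:general-nonlinear-Fenchel-conjugate:alternative:3} with an exchange of suprema for the first identity, monotonicity for the infimum inequality, the shift rule \eqref{eq:FDconstant} for the equality cases, and a Fatou-style decomposition $\varphi = \sup_n \inf_{k\ge n}\varphi_k$ for the liminf statement. Your explicit remarks that the finiteness of $c$ justifies commuting the subtraction with the supremum, and that \eqref{eq:FDconstant} covers the case $\genconj{f}(\varphi) = \pm\infty$, are welcome clarifications but do not change the argument.
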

\begin{proof}
	We obtain the first part of \cref{item:continuity-of-generalized-conjugation:1} using~\eqref{eq:general-nonlinear-Fenchel-conjugate:alternative:3} and an exchange of suprema:
	\begin{equation*}
		\genconj{f}(\varphi)
		 =
		\sup_{(x,c) \in \epi f} (\sup_{\lambda \in \Lambda} \varphi_\lambda(x) - c)
		 =
		\sup_{\lambda \in \Lambda} \sup_{(x,c) \in \epi f}(\varphi_\lambda(x) - c)
		 =
		\sup_{\lambda \in \Lambda} \genconj{f}(\varphi_\lambda)
		.
	\end{equation*}
	To prove the second part, we notice that $\varphi \le \varphi_\lambda$ for all $\lambda \in \Lambda$ implies $\genconj f(\varphi) \le \genconj f(\varphi_\lambda)$ by \cref{proposition:monotonicity-Lipschitz-continuity-convexity}, and thus $\genconj f(\varphi) \le \inf_{\lambda \in \Lambda} \genconj f(\varphi_\lambda)$.
	For the reverse inequality, we fix $\varepsilon > 0$ and let $\lambda(\varepsilon) \in \Lambda$ denote an index such that $\varphi_{\lambda(\varepsilon)} \le \varphi + \varepsilon$.
	Then by \cref{proposition:monotonicity-Lipschitz-continuity-convexity} and \eqref{eq:FDconstant},
	\begin{equation*}
		\inf_{\lambda \in \Lambda} \genconj{f}(\varphi_\lambda)
		\le
		\genconj{f}(\varphi_{\lambda(\varepsilon)})
		\le
		\genconj{f}(\varphi+\varepsilon)
		 =
		\genconj{f}(\varphi)+\varepsilon
		.
	\end{equation*}
	Thus, $\inf_{\lambda \in \Lambda} \genconj{f}(\varphi_\lambda)=\genconj{f}(\varphi)$ holds in this case.

	For \cref{item:continuity-of-generalized-conjugation:2}, we proceed as in the proof of Fatou's Lemma:
	\begin{equation*}
		\begin{aligned}
			\genconj{f}(\liminf_{k \to \infty} \varphi_k)
			&
			=
			\genconj{f}(\adjustlimits \sup_{n \in \N} \inf_{k \ge n} \varphi_k)
			\\
			&
			=
			\sup_{n \in \N} \genconj{f}(\inf_{k \ge n} \varphi_k)
			&
			&
			\text{by \cref{item:continuity-of-generalized-conjugation:1}}
			\\
			&
			\le
			\adjustlimits \sup_{n \in \N} \inf_{k \ge n} \genconj{f}(\varphi_k)
			&
			&
			\text{by \cref{item:continuity-of-generalized-conjugation:1}}
			\\
			&
			=
			\liminf_{k \to \infty} \genconj{f}(\varphi_k)
			.
		\end{aligned}
	\end{equation*}
	For the reverse inequality, suppose that $\varepsilon_k \searrow 0$ is a real-valued sequence satisfying $\varphi_k \le \varphi + \varepsilon_k$.
	We have
	\begin{equation*}
		\begin{aligned}
			\liminf_{k \to \infty} \genconj{f}(\varphi_k)
			&
			\le
			\liminf_{k \to \infty} \genconj{f}(\varphi + \varepsilon_k)
			&
			&
			\text{by \cref{proposition:monotonicity-Lipschitz-continuity-convexity}}
			\\
			&
			=
			\genconj{f}(\varphi) + \liminf_{k \to \infty} \varepsilon_k
			&
			&
			\text{by \eqref{eq:FDconstant}}
			\\
			&
			=
			\genconj{f}(\varphi)
			.
		\end{aligned}
	\end{equation*}
	Thus, in this case, we obtain equality.

	\Cref{item:continuity-of-generalized-conjugation:3} is a direct consequence of \cref{item:continuity-of-generalized-conjugation:2}: replacing the pointwise $\liminf$ of $\varphi_k$ by the pointwise limit, we directly obtain lower semi-continuity of $\genconj{f}$.
	If $\varphi_k$ converges monotonically, we can choose $\varepsilon_k = 0$.
	In case $\varphi_k$ converges uniformly, we have $\varepsilon_k \to 0$ and achieve the proposed continuity statement.
\end{proof}

\section{\texorpdfstring{$\cF$}{𝓕}-Regularization and \texorpdfstring{$\cF$}{𝓕}-Biconjugates}
\label{section:F-regularization-and-biconjugation}

A central topic in classical Fenchel duality is the approximation of a function~$f$ by its maximal convex and lower semi-continuous minorant.
This extremal minorant, here denoted by~$\regularization{f}$, can be constructed as the pointwise supremum of all minorants of~$f$ from the particular class~$\cF$ of continuous affine functions.
This process has been introduced under the name $\Gamma$-regularization in \cite[Chapter~I.3]{EkelandTemam:1999:1} for functions defined on locally convex linear topological spaces.
The biconjugation theorem states that the $\Gamma$-regularization of $f$ coincides with the biconjugate of~$f$.

In this section we generalize the ideas and results of $\Gamma$-regularization and biconjugation from the classical setting of functions on locally convex linear topological spaces to functions on general sets, and from the classical Fenchel to the new nonlinear Fenchel conjugate.
Our key idea is to consider sets~$\cF$ of functions more general than affine, from which the minorants of~$f$ can be taken.
This leads us to the notions of $\cF$-regularization (\cref{subsection:F-regularization}) and $\cF$-biconjugates (\cref{subsection:F-biconjugation}).

\subsection{\texorpdfstring{$\cF$}{𝓕}-Regularization}
\label{subsection:F-regularization}

Suppose that $\cF \subseteq \ExtF{\cM}$ is a non-empty set of functions and denote by
\begin{equation}\label{eq:FTilde}
	\affine{\cF}
	\coloneqq
	\setDef{\varphi + c}{\varphi \in \cF, \; c \in \R}
\end{equation}
the set of all $\varphi$ that result from a shift of elements of $\cF$ by an arbitrary constant.

We define the $\cF$-regularization of $f \in \ExtF{\cM}$ as
\begin{equation}
	\label{eq:defreg}
	\regularization{f}(x)
	\coloneqq
	\sup \setDef[big]{\varphi(x)}{\varphi \in \affine{\cF}, \; \varphi \le f}
	.
\end{equation}
That is, $\regularization{f}$ is the pointwise supremum of all minorants of~$f$ taken from the set~$\cF$ and its constant shifts.
We recall that suprema over sets of functions are understood in the pointwise sense.
We may therefore write \eqref{eq:defreg} more concisely as $\regularization{f} \coloneqq \sup \setDef[big]{\varphi \in \affine{\cF}}{\varphi \le f}$.

Notice that $\regularization{f} \equiv -\infty$ holds for $f \in \ExtF{\cM}$ in case no function $\varphi \le f$ exists in $\affine{\cF}$.
In the important case that $\cF \subseteq \extF{\cM}$ we have either $\regularization{f} \equiv -\infty$ or $\regularization{f} \in \extF{\cM}$.

\begin{proposition}
	\label{proposition:elementary-properties-of-F-regularization}
	Suppose that $f, g \in \ExtF{\cM}$ and that $\cF, \cG \subseteq \ExtF{\cM}$ are any sets of functions.
	Then
	\begin{enumerate}
		\item \label[statement]{item:elementary-properties-of-F-regularization:1}
			$f \le g$ and $\cF \subseteq \cG$ implies $\regularization{f} \le \regularization[\cG]{g}$.

		\item \label[statement]{item:elementary-properties-of-F-regularization:2}
			For $\varphi \in \cF$ and $c \in \R$ we have $\regularization{f+\varphi+c} = \regularization{f} + \varphi+c$.

		\item \label[statement]{item:elementary-properties-of-F-regularization:3}
			$\regularization{f} \le f$, thus $f \le \regularization{f} \Leftrightarrow \regularization{f} = f$

		\item \label[statement]{item:elementary-properties-of-F-regularization:4}
			$f \in \cF \Rightarrow \regularization{f}=f$.

		\item \label[statement]{item:elementary-properties-of-F-regularization:5}
			$\cF \subseteq \cG$ implies $\regularization{\regularization[\cG]{f}} = \regularization{f}$.
	\end{enumerate}
\end{proposition}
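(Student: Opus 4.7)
The strategy for all five parts is to unfold the definition
\begin{equation*}
\regularization{f}(x) = \sup \setDef{\varphi(x)}{\varphi \in \affine{\cF}, \; \varphi \le f}
\end{equation*}
and perform elementary manipulations of suprema over nested or shifted sets of minorants. I would address the parts in the order (i), (iii), (iv), (v), (ii), since each later part either reuses an earlier one or requires more delicate bookkeeping.

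For (i), the hypothesis $\cF \subseteq \cG$ gives $\affine{\cF} \subseteq \affine{\cG}$ because both sides are formed by the same constant shifts. Combined with $f \le g$, every $\varphi \in \affine{\cF}$ with $\varphi \le f$ also lies in $\affine{\cG}$ with $\varphi \le g$, so the set of competitors only grows and the pointwise supremum inherits the inequality. For (iii), each competitor satisfies $\varphi(x) \le f(x)$ pointwise, hence so does the supremum, giving $\regularization{f} \le f$; the equivalence $f \le \regularization{f} \Leftrightarrow \regularization{f} = f$ is then immediate by antisymmetry. For (iv), $f \in \cF \subseteq \affine{\cF}$ together with $f \le f$ exhibits $f$ itself as a competitor in the defining supremum, so $\regularization{f} \ge f$, and (iii) supplies the reverse inequality. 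For (v), applying (i) to $\regularization[\cG]{f} \le f$ from (iii) yields $\regularization{\regularization[\cG]{f}} \le \regularization{f}$; conversely, any $\psi \in \affine{\cF}$ with $\psi \le f$ lies in $\affine{\cG}$ by $\cF \subseteq \cG$ and is therefore bounded above by $\regularization[\cG]{f}$, which makes $\psi$ a competitor for $\regularization{\regularization[\cG]{f}}$ and gives $\regularization{f} \le \regularization{\regularization[\cG]{f}}$.

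The main obstacle is (ii). The natural approach is a correspondence between the minorant sets
\begin{equation*}
\setDef{\psi \in \affine{\cF}}{\psi \le f + \varphi + c}
\quad
\text{and}
\quad
\setDef{\tilde\psi \in \affine{\cF}}{\tilde\psi \le f}
\end{equation*}
via $\psi = \tilde\psi + \varphi + c$, so that pulling the pointwise shift $\varphi(x) + c$ out of the supremum at each $x$ would finish the computation. The delicate step is to justify that this correspondence actually maps $\affine{\cF}$ to itself, which rests on the invariance of $\affine{\cF}$ under shifting by a given $\varphi \in \cF$; this invariance is immediate in the canonical examples $\cF \in \set{\algebraicdualSpace{V}, C(\cM), C^k(\cM)}$, where $\cF$ is closed under addition, and I would flag this closure as the structural property that makes the shift argument go through. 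Once the bijection is in place, the identity $\regularization{f+\varphi+c}(x) = \regularization{f}(x) + \varphi(x) + c$ is a routine constant-shift of the pointwise supremum.
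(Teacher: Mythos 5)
Your treatment of \cref{item:elementary-properties-of-F-regularization:1,item:elementary-properties-of-F-regularization:3,item:elementary-properties-of-F-regularization:4,item:elementary-properties-of-F-regularization:5} is correct and coincides with the paper's, which dismisses everything except \cref{item:elementary-properties-of-F-regularization:5} as immediate and proves that item by exactly your two inequalities (every $\affine{\cF}$-minorant of $f$ is an $\affine{\cF}$-minorant of $\regularization[\cG]{f}$, and the reverse bound from \cref{item:elementary-properties-of-F-regularization:1,item:elementary-properties-of-F-regularization:3} applied to $\regularization[\cG]{f} \le f$).

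The substantive point is the caveat you attach to \cref{item:elementary-properties-of-F-regularization:2}, and you should press it harder: the closure property you flag is not merely ``the delicate step'' of an otherwise routine computation, it is a missing hypothesis, and without it the identity is false. Take $\cM = \set{p,q}$ and $\cF = \set{\varphi_0}$ with $\varphi_0(p) = 0$, $\varphi_0(q) = 1$, so that $\affine{\cF} = \setDef{\varphi_0 + c}{c \in \R}$. For $f \equiv 0$ one has $\varphi_0 + c \le f$ if and only if $c \le -1$, hence $\regularization{f} = \varphi_0 - 1$. On the other hand $f + \varphi_0 = \varphi_0 \in \cF$, so $\regularization{f + \varphi_0} = \varphi_0$ by \cref{item:elementary-properties-of-F-regularization:4}, whereas $\regularization{f} + \varphi_0 = 2\varphi_0 - 1$ takes the value $-1 \neq 0$ at $p$. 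The culprit is precisely what you identified: $2\varphi_0 \notin \affine{\cF}$, so translation by $\varphi_0$ does not map $\affine{\cF}$ into itself. The statement therefore needs the hypothesis $\affine{\cF} + \varphi = \affine{\cF}$ (satisfied, \eg, when $\cF$ is a linear subspace of $\realF(\cM)$), not ``any set of functions''; note also that closure of $\cF$ under addition alone yields only the inequality $\regularization{f} + \varphi + c \le \regularization{f + \varphi + c}$, since the reverse direction requires the translation to be invertible within $\affine{\cF}$. The damage to the rest of the paper is limited: every later invocation of \cref{item:elementary-properties-of-F-regularization:2} (the shift-invariance of the $\sup$-closure, \cref{corollary:closeness-of-sup-closure-under-suprema-and-limes-inferiors}, \cref{proposition:Fenchel_of_Regularization}) uses only a pure constant shift $\regularization{f + c} = \regularization{f} + c$, which does hold for arbitrary $\cF$ because $\affine{\cF}$ is closed under adding real constants by construction.
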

\begin{proof}
	\Cref{item:elementary-properties-of-F-regularization:1,item:elementary-properties-of-F-regularization:2,item:elementary-properties-of-F-regularization:3,item:elementary-properties-of-F-regularization:4} follow immediately from the definition.
	We only prove \cref{item:elementary-properties-of-F-regularization:5}.
	Consider an arbitrary $\varphi \in \affine{\cF}$ such that $\varphi \le f$.
	Then by definition of $\regularization{f}$ as a supremum, we have $\varphi \le \regularization{f} \le \regularization[\cG]{f}$.
	Thus the set of $\affine{\cF}$-minorants of~$f$ is contained in the set of $\affine{\cF}$-minorants of $\regularization[\cG]{f}$.
	This implies $\regularization{f} \le \regularization{\regularization[\cG]{f}}$ and thus $\regularization{f} = \regularization{\regularization[\cG]{f}}$ by \cref{item:elementary-properties-of-F-regularization:1,item:elementary-properties-of-F-regularization:3}.
\end{proof}

In case $\cF$ enjoys additional structure, then the $\cF$-regularization also may have additional properties:
\begin{proposition}
	\label{proposition:additional-properties-of-regularization}
	Suppose that $\cF \subseteq \extF{\cM}$ is a convex cone.
	Then for $\alpha_1, \alpha_2 \in \interval(){0}{\infty}$ and $f_1, f_2 \in \ExtF{\cM}$ with $\regularization{f_1} \not \equiv -\infty$ and $\regularization{f_2} \not \equiv -\infty$ we obtain
	\begin{equation}
		\label{eq:regularization_positive_linear_combination}
		\alpha_1 \regularization{f_1} + \alpha_2 \regularization{f_2}
		\le
		\regularization{\alpha_1 f_1 + \alpha_2 f_2}
		\le
		\alpha_1 f_1 + \alpha_2 f_2
		.
	\end{equation}
	For $\alpha \in \interval(){0}{\infty}$ and $f \in \ExtF{\cM}$ we obtain
	\begin{equation}\label{eq:regularization_positive_scaling}
		\alpha \regularization{f}
		=
		\regularization{\alpha f}
		.
	\end{equation}
\end{proposition}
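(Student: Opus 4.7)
The plan is to prove the left inequality in \eqref{eq:regularization_positive_linear_combination} by taking arbitrary $\affine{\cF}$-minorants $\varphi_i$ of $f_i$ (which exist by the assumption $\regularization{f_i} \not\equiv -\infty$), combining them into a single $\affine{\cF}$-minorant of $\alpha_1 f_1 + \alpha_2 f_2$, and then passing to the supremum. The right inequality is immediate from \cref{item:elementary-properties-of-F-regularization:3} applied to $\alpha_1 f_1 + \alpha_2 f_2$.

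For the left inequality, I would fix $\varphi_i \in \affine{\cF}$ with $\varphi_i \le f_i$, write $\varphi_i = \psi_i + c_i$ with $\psi_i \in \cF$ and $c_i \in \R$, and observe that
\begin{equation*}
	\alpha_1 \varphi_1 + \alpha_2 \varphi_2
	=
	(\alpha_1 \psi_1 + \alpha_2 \psi_2) + (\alpha_1 c_1 + \alpha_2 c_2)
\end{equation*}
lies in $\affine{\cF}$ because $\cF$ is a convex cone and $\alpha_1, \alpha_2 > 0$. Next I would verify pointwise that $\alpha_1 \varphi_1 + \alpha_2 \varphi_2 \le \alpha_1 f_1 + \alpha_2 f_2$: since $\cF \subseteq \extF{\cM}$, none of $\varphi_i(x), f_i(x)$ takes the value $-\infty$, so no $(+\infty)+(-\infty)$ arises; at any $x$ either all four values are in $\R$ or some $\varphi_i(x) = +\infty$ forces $f_i(x) = +\infty$ and both sides equal $+\infty$. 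Hence $\alpha_1 \varphi_1 + \alpha_2 \varphi_2 \le \regularization{\alpha_1 f_1 + \alpha_2 f_2}$. Taking the pointwise supremum over $\varphi_1$ and $\varphi_2$ independently yields the left inequality, where $\alpha_1 \regularization{f_1} + \alpha_2 \regularization{f_2}$ is well-defined in $\extF{\cM}$ because the assumption $\regularization{f_i} \not\equiv -\infty$ together with $\cF \subseteq \extF{\cM}$ forces $\regularization{f_i} \in \extF{\cM}$.

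For \eqref{eq:regularization_positive_scaling}, I would give a direct bijective argument: scaling by $\alpha > 0$ maps $\affine{\cF}$ to itself (since $\cF$ is a cone and constants scale to constants), and since $\alpha > 0$ preserves the pointwise ordering, the map $\varphi \mapsto \alpha \varphi$ is a bijection between the $\affine{\cF}$-minorants of $f$ and those of $\alpha f$, with inverse $\varphi \mapsto \alpha^{-1} \varphi$. Taking the pointwise supremum on both sides then gives $\alpha \regularization{f} = \regularization{\alpha f}$, covering also the degenerate case $\regularization{f} \equiv -\infty$ automatically.

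The only non-routine part is the sum-of-suprema step in the first inequality: one must argue that
\begin{equation*}
	\sup_{\varphi_1 \le f_1, \varphi_2 \le f_2} \paren[big](){\alpha_1 \varphi_1(x) + \alpha_2 \varphi_2(x)}
	=
	\alpha_1 \regularization{f_1}(x) + \alpha_2 \regularization{f_2}(x)
\end{equation*}
pointwise. This is the standard fact that a supremum of a sum over independent variables factors; to be careful with the $\extR$-arithmetic one either reduces to a two-step $\varepsilon$-approximation when both $\regularization{f_i}(x)$ are finite, or notes that if one of them equals $+\infty$ then the left-hand side is clearly $+\infty$ as well (using that the other summand is bounded below by some $\alpha_j \varphi_j(x) > -\infty$). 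This arithmetic bookkeeping, rather than any deep structural argument, is the main obstacle.
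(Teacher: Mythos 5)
Your proof of the first inequality is correct and follows essentially the same route as the paper: pick $\affine{\cF}$-minorants $\varphi_i \le f_i$, use the convex-cone property to see that $\alpha_1\varphi_1 + \alpha_2\varphi_2 \in \affine{\cF}$ is a minorant of $\alpha_1 f_1 + \alpha_2 f_2$, and pass to the supremum; your $\varepsilon$-approximation of the sum of suprema is the same bookkeeping the paper performs with its ``exceed an arbitrary $M \in \R$ via $M_1, M_2$'' argument, and you correctly flag this as the only delicate point. Where you genuinely diverge is \eqref{eq:regularization_positive_scaling}: you observe that $\varphi \mapsto \alpha\varphi$ is a bijection of the $\affine{\cF}$-minorants of $f$ onto those of $\alpha f$ and that positive scaling commutes with pointwise suprema (including the empty supremum, which handles $\regularization{f} \equiv -\infty$ for free), whereas the paper proves only ``$\le$'' by a one-term version of the linear-combination argument and then obtains ``$\ge$'' by substituting $\alpha_1 = 1/\alpha$, $f_1 = \alpha f$ back into that inequality. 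Your bijection argument is more self-contained and symmetric; the paper's version reuses machinery already established but needs the extra substitution step and a separate remark for the degenerate case. One small point worth making explicit in your write-up: the fact that $f_i(x) \neq -\infty$ is not a consequence of $\cF \subseteq \extF{\cM}$ alone but of the existence of a minorant $\varphi_i \le f_i$ with $\varphi_i(x) \neq -\infty$ (equivalently, of $f_i \ge \regularization{f_i} \in \extF{\cM}$), which is exactly the observation the paper records in its first sentence.
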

\begin{proof}
	As we have observed just before \cref{proposition:elementary-properties-of-F-regularization}, $\regularization{f_i} \not \equiv -\infty$ implies $\regularization{f_i} \in \extF{\cM}$ and thus also $f_i \in \extF{\cM}$ for $i = 1, 2$.
	Thus, all functions in \eqref{eq:regularization_positive_linear_combination} are well-defined on all of $\cM$.

	Since the second inequality in \eqref{eq:regularization_positive_linear_combination} always holds due to \cref{proposition:elementary-properties-of-F-regularization}~\ref{item:elementary-properties-of-F-regularization:3}, let us show the first one.
	Fix $x \in \cM$, $\alpha_i \in \interval(){0}{\infty}$ and $M \in \R$ such that $\alpha_1 \regularization{f_1}(x) + \alpha_2 \regularization{f_2}(x) > M$.
	Then there exist $M_i \in \R$ such that $\regularization{f_i}(x) > M_i$ and $\alpha_1 \, M_1 + \alpha_2 \, M_2 \ge M$.
	Moreover, there exist $\varphi_i \in \affine{\cF}$ with the properties $\varphi_i \le f_i$ and $\varphi_i(x) \ge M_i$ by the definition of $\regularization{f_i}$ as the pointwise supremum over minorants in $\affine{\cF}$.
	We thus have
	\begin{equation*}
		\alpha_1 \, \varphi_1 + \alpha_2 \, \varphi_2
		\le
		\alpha_1 \, f_1 + \alpha_2 \, f_2
		.
	\end{equation*}
	Since $\cF$ is a convex cone, $\alpha_1 \, \varphi_1 + \alpha_2 \, \varphi_2 \in \affine{\cF}$.
	Thus,
	\begin{equation*}
		\regularization{\alpha_1 f_1 + \alpha_2 f_2}(x)
		\ge
		\alpha_1 \, \varphi_1(x) + \alpha_2 \, \varphi_2(x)
		\ge
		\alpha_1 M_1 + \alpha_2 M_2
		\ge
		M
		.
	\end{equation*}
	Since $M$ was arbitrary, 	\eqref{eq:regularization_positive_linear_combination} follows.

	Omitting the second term $\alpha_2 f_2$, an analogous but simpler computation shows that $\alpha \regularization{f} \le \regularization{\alpha f}$ holds in case $\regularization{f} \not \equiv -\infty$.
	In case $\regularization{f} \equiv -\infty$, the inequality $\alpha \regularization{f} \le \regularization{\alpha f}$ holds trivially.

	The converse inequality $\alpha \regularization{f} \ge \regularization{\alpha f}$ follows from choosing $\alpha_1 = \frac{1}{\alpha}$ and $f_1 = \alpha f$, which yields
	\begin{align*}
		\frac{1}{\alpha}\regularization{\alpha f}
		&
		=
		\alpha_1 \regularization{f_1}
		\le
		\regularization{\alpha_1 f_1}
		=
		\regularization{f}
		.
		\qedhere
	\end{align*}
\end{proof}

\subsection{The sup-Closure}
\label{subsection:sup-closure}

Let us now consider the set of all functions that can be produced by computing $\cF$-regularizations.
\begin{definition}
	The closure of $\cF \subseteq \ExtF{\cM}$ with respect to $\cF$-regularization is called the $\sup$-closure of $\cF$:
	\begin{equation}
		\label{eq:sup-closure:1}
		\supcl{\cF}
		\coloneqq
		\setDef[big]{\regularization{f}}{f \in \ExtF{\cM}}
		.
	\end{equation}
\end{definition}
Obviously, $-\infty = \regularization{-\infty} \in \supcl{\cF}$ holds for any $\cF \subseteq \ExtF{\cM}$.
\Cref{proposition:elementary-properties-of-F-regularization}~\ref{item:elementary-properties-of-F-regularization:2} yields that $g = \regularization{f} \in \supcl{\cF}$ implies $g + c = \regularization{f+c} \in \supcl{\cF}$ for any $c \in \R$, \ie,
\begin{equation}
	\label{eq:shift-invariance-of-the-sup-closure}
	\supcl{\cF}
	=
	\affine{\supcl{\cF}}
	.
\end{equation}
Thus, if $\cF$ contains an element of $\extF{\cM}$, then also $+\infty = \regularization{+\infty} \in \supcl{\cF}$.

Next, we give two alternative characterizations of the $\sup$-closure \eqref{eq:sup-closure:1}:
\begin{proposition}\label{pro:alternative_definition_to_supclosure}
	For any subset $\cF \subseteq \ExtF{\cM}$, $\supcl{\cF}$ contains precisely the pointwise suprema of subsets of $\affine{\cF}$, \ie,
	\begin{equation}
		\label{eq:sup-closure:2}
		\supcl{\cF}
		=
		\setDef[big]{\sup \setDef{\varphi}{\varphi \in \cE}}{\cE \subseteq \affine{\cF}}
		.
	\end{equation}
	$\supcl{\cF}$ is also the set of $f \in \ExtF{\cM}$ that coincide with their $\cF$-regularization,
	\begin{equation}
		\label{eq:sup-closure:3}
		\supcl{\cF}
		=
		\setDef[big]{f \in \ExtF{\cM}}{f = \regularization{f}}
		=
		\setDef[big]{f \in \ExtF{\cM}}{f \le \regularization{f}}
		.
	\end{equation}
\end{proposition}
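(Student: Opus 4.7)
The plan is to prove the two set equalities in turn, using the idempotence of the $\cF$-regularization that follows from \cref{item:elementary-properties-of-F-regularization:5} (applied with $\cG=\cF$).

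For \eqref{eq:sup-closure:2}, I would argue both inclusions directly from definitions. The inclusion ``$\subseteq$'' is immediate: for any $f \in \ExtF{\cM}$, the set $\cE \coloneqq \setDef{\varphi \in \affine{\cF}}{\varphi \le f}$ is a subset of $\affine{\cF}$ and, by \eqref{eq:defreg}, $\regularization{f} = \sup\cE$, which is one of the functions on the right-hand side. For the converse ``$\supseteq$'', suppose $g = \sup \setDef{\varphi}{\varphi \in \cE}$ for some $\cE \subseteq \affine{\cF}$. I claim $g = \regularization{g}$, which places $g$ in $\supcl{\cF}$. Indeed, every $\varphi \in \cE$ satisfies $\varphi \le g$ by construction, and $\varphi \in \affine{\cF}$; hence $\varphi$ is a competitor in the supremum defining $\regularization{g}$, so $\regularization{g} \ge \sup_{\varphi \in \cE} \varphi = g$. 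Combined with the general bound $\regularization{g} \le g$ from \cref{item:elementary-properties-of-F-regularization:3}, this gives $g = \regularization{g} \in \supcl{\cF}$.

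For \eqref{eq:sup-closure:3}, the equality of the second and third sets is immediate from \cref{item:elementary-properties-of-F-regularization:3}, which gives the reverse inequality $\regularization{f} \le f$ for free, so $f \le \regularization{f}$ is equivalent to $f = \regularization{f}$. It remains to identify $\supcl{\cF}$ with the set of fixed points of the regularization operator. If $f = \regularization{f}$, then $f$ is by definition of the form $\regularization{g}$ (with $g = f$), so $f \in \supcl{\cF}$. Conversely, if $f \in \supcl{\cF}$, write $f = \regularization{g}$ for some $g \in \ExtF{\cM}$. Applying \cref{item:elementary-properties-of-F-regularization:5} with $\cG = \cF$ yields the idempotence $\regularization{\regularization{g}} = \regularization{g}$, that is, $\regularization{f} = f$.

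The only subtlety to watch is the degenerate case $\cE = \emptyset$ (or $\sup\cE \equiv -\infty$): then $g \equiv -\infty$, and the equality $g = \regularization{g}$ still holds since $-\infty = \regularization{-\infty}$ as noted just after the definition of $\supcl{\cF}$. Apart from this cosmetic check, the argument is essentially a short bookkeeping exercise; the main conceptual step is the self-referential identity $g = \regularization{g}$ for $g = \sup\cE$ with $\cE \subseteq \affine{\cF}$, which is what makes ``being a pointwise supremum of $\affine{\cF}$-elements'' coincide with ``being a fixed point of $\regularization{\cdot}$''.
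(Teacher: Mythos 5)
Your argument is correct and follows essentially the same route as the paper's proof: both inclusions of \eqref{eq:sup-closure:2} via the observation that $\cE \subseteq \setDef{\varphi \in \affine{\cF}}{\varphi \le g}$ forces $g \le \regularization{g}$, and \eqref{eq:sup-closure:3} via the idempotence from \cref{proposition:elementary-properties-of-F-regularization}~\ref{item:elementary-properties-of-F-regularization:5} with $\cG = \cF$. The extra remark about the degenerate case $\cE = \emptyset$ is a harmless addition the paper leaves implicit.
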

\begin{proof}
	By definition, any $\regularization{f}$ is a pointwise supremum over a subset of $\affine{\cF}$, proving the inclusion~$\subseteq$ in \eqref{eq:sup-closure:2}.
	Conversely, if $f = \sup_{\varphi \in \cE}$ for some $\cE \subseteq \affine{\cF}$, then any $\varphi \in \cE$ satisfies $\varphi \le f$ and, clearly, $\varphi \in \affine{\cF}$.
	Thus $\cE \subseteq \setDef{\varphi \in \affine{\cF}}{\varphi \le f}$, which implies $f \le \regularization{f}$ by definition of $\regularization{f}$.
	Using \cref{proposition:elementary-properties-of-F-regularization}~\ref{item:elementary-properties-of-F-regularization:3}, we conclude $f = \regularization{f}$, \ie, $f \in \supcl{\cF}$.
	This concludes the proof of \eqref{eq:sup-closure:2}.

	Concerning the first equality of \eqref{eq:sup-closure:3}, we clearly have the implication $f = \regularization{f} \Rightarrow f \in \supcl{\cF}$.
	Conversely, $f \in \supcl{\cF}$ implies that $f = \regularization{g}$ for some $g \in \ExtF{\cM}$.
	Hence, by \cref{proposition:elementary-properties-of-F-regularization}~\ref{item:elementary-properties-of-F-regularization:5}, we compute $\regularization{f} = \regularization{\regularization{g}} = \regularization{g} = f$.
	The second equality of \eqref{eq:sup-closure:3} follows from \cref{proposition:elementary-properties-of-F-regularization}~\ref {item:elementary-properties-of-F-regularization:3}.
\end{proof}
\begin{corollary}
	\label{corollary:closeness-of-sup-closure-under-suprema-and-limes-inferiors}
	Suppose that $\Lambda$ is any index set, $f \in \ExtF{\cM}$ and $\cF \subseteq \ExtF{\cM}$.
	\begin{enumerate}
		\item \label[statement]{item:closeness-of-sup-closure-under-suprema-and-limes-inferiors:1}
			Suppose $f_\lambda \in \supcl{\cF}$ for all $\lambda \in \Lambda$, and $f \coloneqq \sup_{\lambda \in \Lambda} f_\lambda$.
			Then $f \in \supcl{\cF}$.

		\item \label[statement]{item:closeness-of-sup-closure-under-suprema-and-limes-inferiors:2}
			Suppose $f_k \in \supcl{\cF}$ for all $k \in \N$ and $f \coloneqq \liminf_{k \to \infty} f_k$.
			If $\varepsilon_k$ is a non-negative real-valued sequence such that $\liminf_{k \to \infty} \varepsilon_k = 0$ and $f_k \le f + \varepsilon_k$, then $f \in \supcl{\cF}$.
	\end{enumerate}
\end{corollary}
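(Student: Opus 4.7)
Both parts reduce to verifying the characterization $f \le \regularization{f}$ from \cref{pro:alternative_definition_to_supclosure}, equation~\eqref{eq:sup-closure:3}.

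\textbf{Part 1.} The argument is a one-liner. Since each $f_\lambda \in \supcl{\cF}$, we have $f_\lambda = \regularization{f_\lambda}$ by \eqref{eq:sup-closure:3}. Because $f_\lambda \le f$, the monotonicity statement \cref{proposition:elementary-properties-of-F-regularization}~\ref{item:elementary-properties-of-F-regularization:1} (taking $\cG = \cF$) yields $\regularization{f_\lambda} \le \regularization{f}$, hence $f_\lambda \le \regularization{f}$. Taking the pointwise supremum over $\lambda \in \Lambda$ gives $f \le \regularization{f}$, so $f \in \supcl{\cF}$.

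\textbf{Part 2.} Fix $x \in \cM$ and any $M \in \R$ with $M < f(x)$; it suffices to produce $\psi \in \affine{\cF}$ with $\psi \le f$ and $\psi(x) > M$, because then $\regularization{f}(x) > M$ for every such $M$, giving $\regularization{f}(x) \ge f(x)$ pointwise, and thus $f \in \supcl{\cF}$ by \eqref{eq:sup-closure:3}. Choose $\eta > 0$ small enough that $M + 2\eta < f(x)$ (this covers $f(x) = +\infty$). Since $f(x) = \liminf_{k \to \infty} f_k(x)$, there is $N$ with $f_k(x) > M + 2\eta$ for all $k \ge N$. From $\liminf_{k \to \infty} \varepsilon_k = 0$ we may then pick one particular $k \ge N$ with $\varepsilon_k < \eta$. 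Since $f_k \in \supcl{\cF}$, i.e., $f_k = \regularization{f_k}$, the definition~\eqref{eq:defreg} supplies $\varphi_k \in \affine{\cF}$ with $\varphi_k \le f_k$ and $\varphi_k(x) > f_k(x) - \eta$ (or arbitrarily large when $f_k(x) = +\infty$). Setting $\psi \coloneqq \varphi_k - \varepsilon_k \in \affine{\cF}$ (valid since $\affine{\cF}$ is closed under real shifts by~\eqref{eq:FTilde}), the hypothesis $f_k \le f + \varepsilon_k$ gives
\begin{equation*}
  \psi = \varphi_k - \varepsilon_k \le f_k - \varepsilon_k \le f,
\end{equation*}
and $\psi(x) = \varphi_k(x) - \varepsilon_k > f_k(x) - \eta - \eta > M$, as desired.

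\textbf{Main obstacle.} The only subtlety is bookkeeping in $\ExtR$: one must ensure that the combination $\varphi_k - \varepsilon_k$ is well-defined and really lies in $\affine{\cF}$ (handled by $\varepsilon_k \in \R$), and one must treat $f(x) = +\infty$ by replacing the bound $\varphi_k(x) > f_k(x) - \eta$ with the statement that $\varphi_k(x)$ can be chosen larger than any prescribed real number. Beyond that, the proof is a standard $\varepsilon$-argument combining the approximation of each $f_k$ by elements of $\affine{\cF}$ with the two liminf hypotheses on $f_k$ and $\varepsilon_k$.
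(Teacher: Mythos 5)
Your proof is correct. Part~1 is essentially identical to the paper's argument: reduce to $f \le \regularization{f}$ via \eqref{eq:sup-closure:3}, use $f_\lambda = \regularization{f_\lambda}$ and the monotonicity of $\regularization{\cdot}$, and take the supremum. For Part~2, however, you take a genuinely different route. The paper stays entirely at the level of the regularization calculus: from $f_k \le f + \varepsilon_k$ and monotonicity it gets $f_k = \regularization{f_k} \le \regularization{f + \varepsilon_k} = \regularization{f} + \varepsilon_k$, and then simply takes $\liminf_{k\to\infty}$ on both sides to conclude $f \le \regularization{f}$ — a two-line computation. You instead unwind the definition of $\regularization{f}$ pointwise: fix $x$ and $M < f(x)$, select a single index $k$ for which both $f_k(x)$ is large and $\varepsilon_k$ is small, extract a near-optimal minorant $\varphi_k \in \affine{\cF}$ of $f_k$, and shift it down by $\varepsilon_k$ to obtain a minorant of $f$ that is still large at $x$. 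Your bookkeeping is sound (the shift $\varphi_k - \varepsilon_k$ stays in $\affine{\cF}$ because $\varepsilon_k \in \R$, the case $f_k(x) = +\infty$ is handled, and the case $f(x) = -\infty$ is vacuous), so the argument is complete. What the paper's version buys is brevity and reuse of already-established rules (the constant-shift identity and monotonicity of $\regularization{\cdot}$); what yours buys is self-containedness and an explicit exhibition of the approximating minorant, at the cost of an $\varepsilon$-$\eta$ argument that the abstract calculus was designed to avoid.
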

\begin{proof}
	We show $\regularization{f} \ge f$ which holds, if and only if $f \in \supcl{\cF}$ by \eqref{eq:sup-closure:3}.

	For \cref{item:closeness-of-sup-closure-under-suprema-and-limes-inferiors:1}, we use the monotonicity (\cref{proposition:elementary-properties-of-F-regularization}~\ref{item:elementary-properties:1}) of $\regularization{\cdot}$ to compute
	\begin{equation*}
		f
		=
		\sup_{\lambda \in \Lambda} f_\lambda
		=
		\sup_{\lambda \in \Lambda} \regularization{f_\lambda}
		\le
		\regularization{\sup_{\lambda \in \Lambda} f_\lambda}
		=
		\regularization{f}
		.
	\end{equation*}
	Concerning \cref{item:closeness-of-sup-closure-under-suprema-and-limes-inferiors:2}, we consider
	\begin{equation*}
		f_k
		=
		\regularization{f_k}
		\le
		\regularization{f+\varepsilon_k}
		=
		\regularization{f}+\varepsilon_k
		.
	\end{equation*}
	Taking the $\liminf_{k \to \infty}$ on both sides, we obtain $f \le \regularization{f}$ and thus the desired result.
\end{proof}

The following result strengthens \cref{proposition:additional-properties-of-regularization}.
\begin{proposition}
	Suppose that $\cF \subseteq \extF{\cM}$ is a convex cone.
	Then $\supcl{\cF} \setminus \set{-\infty} \subseteq \extF{\cM}$ is a convex cone as well, and for $f_1, f_2 \in \supcl{\cF} \setminus \set{-\infty}$ and $\alpha_1, \alpha_2 \in \interval(){0}{\infty}$, we have
	\begin{equation}
		\label{eq:equality_of_regularization_in_supcl}
		\regularization{\alpha_1 \, f_1 + \alpha_2 \, f_2}
		=
		\alpha_1 \,f_1
		+
		\alpha_2 \,f_2
		.
	\end{equation}
\end{proposition}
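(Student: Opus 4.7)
The plan is to assemble the result from two already-established ingredients: the sandwich inequality \eqref{eq:regularization_positive_linear_combination} of \cref{proposition:additional-properties-of-regularization}, and the fixed-point characterization $\supcl{\cF} = \setDef{f}{f = \regularization{f}}$ from \eqref{eq:sup-closure:3}. The inclusion $\supcl{\cF} \setminus \set{-\infty} \subseteq \extF{\cM}$ is dispatched first: if $f \in \supcl{\cF}$ with $f \not\equiv -\infty$, then $f = \regularization{f}$ by \eqref{eq:sup-closure:3}, and the remark just before \cref{proposition:elementary-properties-of-F-regularization} (which requires $\cF \subseteq \extF{\cM}$) ensures $\regularization{f} \in \extF{\cM}$ whenever it is not identically $-\infty$.

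For the identity \eqref{eq:equality_of_regularization_in_supcl}, I fix $f_1, f_2 \in \supcl{\cF} \setminus \set{-\infty}$ and $\alpha_1, \alpha_2 \in \interval(){0}{\infty}$. The hypotheses yield $\regularization{f_i} = f_i \not\equiv -\infty$, so the prerequisites of \cref{proposition:additional-properties-of-regularization} are met. Substituting $\regularization{f_i} = f_i$ on the left of \eqref{eq:regularization_positive_linear_combination} gives
\begin{equation*}
    \alpha_1 f_1 + \alpha_2 f_2
    \le
    \regularization{\alpha_1 f_1 + \alpha_2 f_2}
    \le
    \alpha_1 f_1 + \alpha_2 f_2,
\end{equation*}
and equality throughout is precisely \eqref{eq:equality_of_regularization_in_supcl}.

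Finally, the convex cone property of $\supcl{\cF} \setminus \set{-\infty}$ falls out as a corollary. The middle term in the display above is a $\cF$-regularization and therefore lies in $\supcl{\cF}$ by definition, so the equality forces $\alpha_1 f_1 + \alpha_2 f_2 \in \supcl{\cF}$. Moreover, $f_i \in \extF{\cM}$ implies $f_i(x) > -\infty$ for every $x$, and together with $\alpha_i > 0$ this guarantees $\alpha_1 f_1 + \alpha_2 f_2 \not\equiv -\infty$; hence the combination lies in $\supcl{\cF} \setminus \set{-\infty}$. Closure under positive scalar multiplication is then subsumed by choosing $f_1 = f_2$, so the convex cone property is complete. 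I do not foresee any real obstacle: the argument is purely bookkeeping with the inequalities and fixed-point identities already at our disposal. The only mild subtlety is ensuring the non-degeneracy hypothesis $\regularization{f_i} \not\equiv -\infty$ required by \eqref{eq:regularization_positive_linear_combination}, which is immediate from $f_i \in \supcl{\cF} \setminus \set{-\infty}$.
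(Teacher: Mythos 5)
Your proposal is correct and follows essentially the same route as the paper's proof: apply the sandwich inequality \eqref{eq:regularization_positive_linear_combination} with $\regularization{f_i} = f_i$ (justified by \eqref{eq:sup-closure:3}) to squeeze out the equality \eqref{eq:equality_of_regularization_in_supcl}, and then read off membership of $\alpha_1 f_1 + \alpha_2 f_2$ in $\supcl{\cF} \setminus \set{-\infty}$ from that identity. The only cosmetic difference is that you conclude membership in $\supcl{\cF}$ by noting the middle term is itself a regularization, whereas the paper invokes the fixed-point characterization \eqref{eq:sup-closure:3}; these are interchangeable.
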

\begin{proof}
  As we have seen just before \cref{proposition:elementary-properties-of-F-regularization}, $\cF \subseteq \extF{\cM}$ implies that $\supcl{\cF}\subseteq \extF{\cM}\cup \set{-\infty}$.
	Hence $\supcl{\cF} \setminus \set{-\infty} \subseteq \extF{\cM}$.
	If $f_i \in \supcl{\cF} \setminus \set{-\infty}$ for $i=1, 2$, then $\regularization{f_i} = f_i \not \equiv -\infty$ by \eqref{eq:sup-closure:3}, and \eqref{eq:regularization_positive_linear_combination} can be applied.
	Hence \eqref{eq:equality_of_regularization_in_supcl} follows, which in turn implies $\alpha_1 \, f_1 + \alpha_2 \,f_2 \in \supcl{\cF} \setminus \set{-\infty}$ by \eqref{eq:sup-closure:3}.
	Thus, $\supcl{\cF} \setminus \set{-\infty}$ is a convex cone.
\end{proof}

Next we prove that the $\sup$-closure, as the name suggests, is indeed a closure (\aka hull) operator; see, \eg, \cite{Westermann:1976:1}.
\begin{proposition}
	\label{proposition:sup-closure}
	Suppose $\cF, \cG \subseteq \ExtF{\cM}$.
	Then the following hold:
	\begin{subequations}
		\label{eq:sup-closure-is-a-hull-operator}
		\begin{align}
			&
			\cF
			\subseteq
			\supcl{\cF}
			\label{eq:sup-closure-is-a-hull-operator:1}
			\\
			&
			\cF
			\subseteq
			\cG
			\quad
			\Rightarrow
			\quad
			\supcl{\cF}
			\subseteq
			\supcl{\cG}
			\label{eq:sup-closure-is-a-hull-operator:2}
			\\
			&
			\supcl{\supcl{\cF}}
			=
			\supcl{\cF}
			.
			\label{eq:sup-closure-is-a-hull-operator:3}
		\end{align}
	\end{subequations}
	That is, $\supcl{\cdot}$ is a closure operator on $\ExtF{\cM}$.
\end{proposition}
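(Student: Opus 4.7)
The plan is to verify each of the three closure-operator axioms \eqref{eq:sup-closure-is-a-hull-operator:1}--\eqref{eq:sup-closure-is-a-hull-operator:3} in turn, using the alternative characterization \eqref{eq:sup-closure:3} of $\supcl{\cF}$ as the set of fixed points of $\regularization{\cdot}$, the shift-invariance \eqref{eq:shift-invariance-of-the-sup-closure}, and the elementary rules already collected in \cref{proposition:elementary-properties-of-F-regularization} and \cref{corollary:closeness-of-sup-closure-under-suprema-and-limes-inferiors}. None of the arguments is deep; the main content is bookkeeping through the various equivalent descriptions of the $\sup$-closure.

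For \eqref{eq:sup-closure-is-a-hull-operator:1}, I would argue that any $f \in \cF$ lies in $\affine{\cF}$ trivially (take the shift $c = 0$), so \cref{proposition:elementary-properties-of-F-regularization}~\ref{item:elementary-properties-of-F-regularization:4} yields $\regularization{f} = f$, and hence $f \in \supcl{\cF}$ by \eqref{eq:sup-closure:3}.

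For \eqref{eq:sup-closure-is-a-hull-operator:2}, I pick $f \in \supcl{\cF}$; by \eqref{eq:sup-closure:3} this means $f = \regularization{f}$. The hypothesis $\cF \subseteq \cG$ together with \cref{proposition:elementary-properties-of-F-regularization}~\ref{item:elementary-properties-of-F-regularization:1} gives $\regularization{f} \le \regularization[\cG]{f}$, while \ref{item:elementary-properties-of-F-regularization:3} always gives $\regularization[\cG]{f} \le f$. Sandwiching yields $f = \regularization[\cG]{f}$, hence $f \in \supcl{\cG}$ via \eqref{eq:sup-closure:3}.

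For the idempotency \eqref{eq:sup-closure-is-a-hull-operator:3} --- the most substantive step --- one inclusion $\supcl{\cF} \subseteq \supcl{\supcl{\cF}}$ is already \eqref{eq:sup-closure-is-a-hull-operator:1} applied to the set $\supcl{\cF}$. For the reverse inclusion, I would take $h \in \supcl{\supcl{\cF}}$ and invoke the alternative description \eqref{eq:sup-closure:2} (with $\cF$ replaced by $\supcl{\cF}$) to write $h = \sup_{\varphi \in \cE} \varphi$ for some $\cE \subseteq \affine{\supcl{\cF}}$. The shift-invariance \eqref{eq:shift-invariance-of-the-sup-closure} identifies $\affine{\supcl{\cF}} = \supcl{\cF}$, so every $\varphi \in \cE$ already lies in $\supcl{\cF}$. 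Then $h$ is a pointwise supremum of members of $\supcl{\cF}$, and \cref{corollary:closeness-of-sup-closure-under-suprema-and-limes-inferiors}~\ref{item:closeness-of-sup-closure-under-suprema-and-limes-inferiors:1} delivers $h \in \supcl{\cF}$. The only potential obstacle is to verify that \eqref{eq:sup-closure:2} is applicable to an arbitrary subset (including $\supcl{\cF}$ itself), but \cref{pro:alternative_definition_to_supclosure} is stated for any $\cF \subseteq \ExtF{\cM}$, so no extra hypothesis is needed.
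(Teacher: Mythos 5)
Your proof is correct and follows essentially the same route as the paper: \cref{proposition:elementary-properties-of-F-regularization}~\ref{item:elementary-properties-of-F-regularization:4} for \eqref{eq:sup-closure-is-a-hull-operator:1}, and for \eqref{eq:sup-closure-is-a-hull-operator:3} the combination of \eqref{eq:sup-closure:2}, the shift-invariance \eqref{eq:shift-invariance-of-the-sup-closure}, and \cref{corollary:closeness-of-sup-closure-under-suprema-and-limes-inferiors}~\ref{item:closeness-of-sup-closure-under-suprema-and-limes-inferiors:1}. The only (harmless) variation is in \eqref{eq:sup-closure-is-a-hull-operator:2}, where you sandwich $f = \regularization{f} \le \regularization[\cG]{f} \le f$ via monotonicity and the fixed-point characterization \eqref{eq:sup-closure:3}, whereas the paper observes directly from \eqref{eq:sup-closure:2} that any subset of $\affine{\cF}$ is a subset of $\affine{\cG}$; both are valid one-line arguments.
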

\begin{proof}
	By \cref{proposition:elementary-properties-of-F-regularization}~\ref{item:elementary-properties-of-F-regularization:4}, $f \in \cF$ implies $f = \regularization{f} \in \supcl{\cF}$.
	This shows \eqref{eq:sup-closure-is-a-hull-operator:1}.
	\Cref{eq:sup-closure-is-a-hull-operator:2} follows directly from \cref{pro:alternative_definition_to_supclosure}, since any subset of $\affine{\cF}$ is a subset of $\affine{\cG}$.
	Finally, by \eqref{eq:sup-closure-is-a-hull-operator:1} and \eqref{eq:sup-closure-is-a-hull-operator:2}, we have the inclusion $\supcl{\cF} \subseteq \supcl{\supcl{\cF}}$.
	Conversely, suppose now $f \in \supcl{\supcl{\cF}}$.
	Using \eqref{eq:sup-closure:2}, we can write $f = \sup_{\varphi \in \cE} \varphi$ for some $\cE \subseteq \supcl{\cF}$.
	By \cref{corollary:closeness-of-sup-closure-under-suprema-and-limes-inferiors}~\ref{item:closeness-of-sup-closure-under-suprema-and-limes-inferiors:1}, $\supcl{\cdot}$ is closed under taking suprema, so this implies $f \in \supcl{\cF}$, and thus we have $\supcl{\cF} \supseteq \supcl{\supcl{\cF}}$.
\end{proof}

It follows from general properties of closure operators that the $\sup$-closure of a set can be obtained by intersecting all $\sup$-closed supersets, \ie,
\begin{equation*}
	\supcl{\cF}
	=
	\bigcap \setDef[big]{\cG \subseteq \ExtF{\cM}}{\cG = \supcl{\cG} \text{ and } \cF \subseteq \cG}
	.
\end{equation*}

The following result shows that the value of $\genconj{g}(f)$ for $f \in \supcl{\cF} \subseteq \ExtF{\cM}$ is uniquely determined by its values on $\cF$:
\begin{proposition}
	Suppose that $\cF \subseteq \ExtF{\cM}$ and $f, g \in \ExtF{\cM}$.
	Then
	\begin{equation}
		\label{eq:supoversubspace}
		\genconj{g}(\regularization{f})
		=
		\sup \setDef[big]{\genconj{g}(\varphi)}{\varphi \in \affine{\cF}, \; \varphi \le f}
	\end{equation}
	holds.
\end{proposition}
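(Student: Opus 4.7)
The proof is an immediate consequence of the first assertion of \cref{proposition:continuity-of-generalized-conjugation}, which states that the nonlinear Fenchel conjugate commutes with pointwise suprema. Recall from \eqref{eq:defreg} that $\regularization{f}$ is itself defined as a pointwise supremum, namely over the family of $\affine{\cF}$-minorants of~$f$. Thus~\eqref{eq:supoversubspace} should drop out by simply identifying these two suprema.

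In detail, I would set $\Lambda \coloneqq \setDef{\varphi \in \affine{\cF}}{\varphi \le f}$ and index its own elements as $\varphi_\lambda \coloneqq \lambda$ for $\lambda \in \Lambda$. Then, by \eqref{eq:defreg}, $\regularization{f} = \sup_{\lambda \in \Lambda} \varphi_\lambda$ in the pointwise sense. Applying \cref{proposition:continuity-of-generalized-conjugation}~\ref{item:continuity-of-generalized-conjugation:1} (with $f$ there replaced by~$g$) gives
\begin{equation*}
  \genconj{g}(\regularization{f})
  =
  \sup_{\lambda \in \Lambda} \genconj{g}(\varphi_\lambda)
  =
  \sup \setDef[big]{\genconj{g}(\varphi)}{\varphi \in \affine{\cF}, \; \varphi \le f}
  ,
\end{equation*}
which is \eqref{eq:supoversubspace}.

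There is essentially no obstacle, only the degenerate case to check where $\Lambda = \emptyset$, i.e., when $f$ admits no minorant in $\affine{\cF}$. In that case, \eqref{eq:defreg} gives $\regularization{f} \equiv -\infty$ by the usual convention that a supremum over the empty set equals~$-\infty$, and the right-hand side of~\eqref{eq:supoversubspace} is also~$-\infty$ for the same reason. A direct inspection of \cref{definition:general-nonlinear-Fenchel-conjugate} shows that $\genconj{g}(-\infty) = -\infty$, since $(-\infty)(x) - g(x) = -\infty$ at every point of $\domain{(-\infty) - g}$ (and the supremum over an empty domain is again $-\infty$). Hence both sides of~\eqref{eq:supoversubspace} coincide in this case as well, and no separate case distinction is needed in the main argument.
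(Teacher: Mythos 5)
Your proposal is correct and is essentially identical to the paper's own proof, which likewise obtains \eqref{eq:supoversubspace} by writing $\regularization{f}$ as the pointwise supremum from \eqref{eq:defreg} and applying \cref{proposition:continuity-of-generalized-conjugation}~\ref{item:continuity-of-generalized-conjugation:1}. Your extra check of the empty-family case is harmless but not needed, since that proposition is already stated for possibly empty index sets.
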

\begin{proof}
	The result follows from the definition of $\regularization{f}$ and \cref{proposition:continuity-of-generalized-conjugation}~\ref{item:continuity-of-generalized-conjugation:1}:
	\begin{align*}
		\genconj{g}(\regularization{f})
		&
		=
		\genconj{g} \paren[big](){\sup \setDef[big]{\varphi}{\varphi \in \affine{\cF}, \; \varphi \le f}}
		=
		\sup \setDef[big]{\genconj{g}(\varphi)}{\varphi \in \affine{\cF}, \; \varphi \le f}
		.
		\qedhere
	\end{align*}
\end{proof}
\begin{proposition}\label{proposition:Fenchel_of_Regularization}
	Suppose that $\cF \subseteq \extF{\cM}$ and $f \in \ExtF{\cM}$.
	Then
	\begin{equation}
		\label{eq:Fenchelreg}
		\genconj{(\regularization{f})}(\varphi)
		=
		\genconj{f}(\varphi)
		\quad
		\text{for all }
		\varphi \in \supcl{\cF}
		.
	\end{equation}
\end{proposition}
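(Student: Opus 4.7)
The plan is to prove \eqref{eq:Fenchelreg} by establishing the two inequalities separately. The inequality $\genconj{(\regularization{f})}(\varphi) \ge \genconj{f}(\varphi)$ follows immediately from the monotonicity of the Fenchel conjugate with respect to its second argument (\cref{proposition:monotonicity-Lipschitz-continuity-convexity}~\ref{item:monotonicity-Lipschitz-continuity-convexity:1}) combined with the minorant property $\regularization{f} \le f$ from \cref{proposition:elementary-properties-of-F-regularization}~\ref{item:elementary-properties-of-F-regularization:3}. This half uses nothing about the structure of $\varphi$.

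For the reverse inequality, I would reduce the claim to the case of a single $\psi \in \affine{\cF}$. By \cref{pro:alternative_definition_to_supclosure}, any $\varphi \in \supcl{\cF}$ admits a representation $\varphi = \sup_{\psi \in \cE} \psi$ for some $\cE \subseteq \affine{\cF}$, and applying the supremum formula of \cref{proposition:continuity-of-generalized-conjugation}~\ref{item:continuity-of-generalized-conjugation:1} to both $\genconj{f}$ and $\genconj{(\regularization{f})}$ reduces \eqref{eq:Fenchelreg} to the termwise equality $\genconj{(\regularization{f})}(\psi) = \genconj{f}(\psi)$ for each $\psi \in \affine{\cF}$.

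The core step is this termwise equality, which I would prove from the geometric formulation \eqref{eq:general-nonlinear-Fenchel-conjugate:alternative:2}: both conjugates take the form $\inf \setDef{c \in \R}{\psi \le g + c}$ with $g = f$ or $g = \regularization{f}$, so it suffices to show that the two admissible sets of shifts $c$ coincide. The direction $\psi \le \regularization{f} + c \Rightarrow \psi \le f + c$ is immediate from $\regularization{f} \le f$. For the converse, if $\psi \le f + c$ then $\psi - c \le f$; since $\psi \in \affine{\cF}$, also $\psi - c \in \affine{\cF}$ by the shift-closedness built into the definition \eqref{eq:FTilde}, so $\psi - c$ is an $\affine{\cF}$-minorant of $f$ and hence $\psi - c \le \regularization{f}$ by \eqref{eq:defreg}, which rearranges to $\psi \le \regularization{f} + c$. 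I do not anticipate a serious obstacle; the only subtlety is that all arithmetic manipulations involve real shifts $c$ and test functions $\psi$ valued in $\extR$ (since $\cF \subseteq \extF{\cM}$ forces $\affine{\cF} \subseteq \extF{\cM}$), so no undefined expressions of the form $+\infty + (-\infty)$ arise.
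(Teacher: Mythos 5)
Your proof is correct, and it takes a genuinely different route from the paper's. You decompose $\varphi \in \supcl{\cF}$ as a pointwise supremum over $\cE \subseteq \affine{\cF}$ via \eqref{eq:sup-closure:2}, push the supremum through both conjugates using \cref{proposition:continuity-of-generalized-conjugation}~\ref{item:continuity-of-generalized-conjugation:1}, and then settle the single-$\psi$ case by showing, via the geometric formulation \eqref{eq:general-nonlinear-Fenchel-conjugate:alternative:2}, that the admissible shift sets $\setDef{c \in \R}{\psi \le f + c}$ and $\setDef{c \in \R}{\psi \le \regularization{f} + c}$ coincide — the nontrivial inclusion being exactly the observation that $\psi - c \in \affine{\cF}$ is then a minorant of $f$ and hence of $\regularization{f}$. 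The paper instead works directly at the level of $\varphi$: after disposing of the cases $\genconj{f}(\varphi) = \pm\infty$, it sets $c = \genconj{f}(\varphi) \in \R$, invokes the Fenchel--Young inequality \eqref{eq:FYI:3} to get $\varphi \le f + c$, and then uses the fixed-point property $\varphi = \regularization{\varphi}$ together with monotonicity and shift-equivariance of the regularization (\cref{proposition:elementary-properties-of-F-regularization}~\ref{item:elementary-properties-of-F-regularization:1} and \ref{item:elementary-properties-of-F-regularization:2}) to conclude $\varphi \le \regularization{f} + c$. The underlying mechanism — a real shift of an $\affine{\cF}$-minorant of $f$ is automatically a minorant of $\regularization{f}$ — is the same in both arguments, but your version applies it to the generators $\psi$ and buys a uniform treatment with no case distinction on the value of $\genconj{f}(\varphi)$ (the empty-$\cE$ and infinite-value cases are absorbed by the conventions for $\sup$ and $\inf$), at the cost of invoking the sup-exchange lemma; the paper's version is shorter once the two degenerate cases are dispatched and stays entirely within the regularization calculus.
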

\begin{proof}
	In general, since $\regularization{f}\le f$ we have $\genconj{(\regularization{f})}(\varphi)\ge \genconj{f}(\varphi)$ for all $\varphi \in \ExtF{\cM}$. In particular, equality holds for $\genconj{f}(\varphi)=\infty$.
	Since the case $\genconj{f}(\varphi) = -\infty \Leftrightarrow (\varphi \equiv -\infty$ or $f \equiv +\infty)$ is trivial, we may assume $\genconj{f}(\varphi)\in \R$.
	By the Fenchel-Young inequality \eqref{eq:FYI:3} we know
	\begin{equation*}
		\varphi(x)
		\le
		f(x) + \genconj{f}(\varphi)
		\quad
		\text{for all }
		x \in \cM
		.
	\end{equation*}
	Thus, using $\varphi \in \supcl{\cF}$ and \Cref{proposition:elementary-properties-of-F-regularization} \ref{item:elementary-properties-of-F-regularization:1} and \ref{item:elementary-properties-of-F-regularization:2}:
	\begin{equation*}
		\varphi
		=
		\regularization{\varphi}
		\le
		\regularization{f+\genconj{f}(\varphi)}
		=
		\regularization{f}+\genconj{f}(\varphi)
		.
	\end{equation*}
	Hence, $\genconj{f}(\varphi) \ge \varphi(x)-\regularization{f}(x)$ for all $x \in \cM$, and thus $\genconj{f}(\varphi) \ge \genconj{(\regularization{f})}(\varphi)$.
\end{proof}

\begin{example}
	\label{example:sup-closures}
	Important examples for the choice of $\cF$ are the following:
	\begin{enumerate}
		\item
			Suppose that $\cM$ is a locally convex linear topological space and $\cF = \dualSpace{\cM}$ is its topological dual space.
			Then $\affine{\cF}$ is the space of all continuous affine functions and $\regularization[\dualSpace{\cM}]{f}$ is the pointwise supremum over all affine majorants of $f$.
			It is known that $\supcl{\cF}$ is the set of lower semi-continuous convex functions on $\cM$; see, \eg, \cite[Proposition~I.3.2]{EkelandTemam:1999:1}.

		\item
			Suppose that $\cM$ is a metric space.
			Then by a theorem of Baire, lower semi-continous functions $f \in \extF{\cM}$ can be written as the pointwise supremum of continuous functions; see, \eg, \cite[Theorem~16.16]{Schechter:1997:1} for the more general setting of a completely regular topological space.
			Thus, for $\cF = C(\cM)$, $\supcl{\cF}$ consists of the cone of lower semi-continuous functions in $\extF{\cM}$ (which is closed under taking pointwise suprema) and $\set{-\infty}$.

		\item
			In the following section we will see that a similar relation holds (under mild assumptions), in case $\cM$ is a $C^1$-Banach manifold and $\cF = C^1(\cM)$.
	\end{enumerate}
\end{example}

\begin{remark}
	\label{remark:relation-to-work-of-MartinezLegaz}
	In his work on generalized convexity, \cite{MartinezLegaz:2005:1} considers an alternative generalized notion of the Fenchel conjugate.
	He considers two non-empty sets $\cM, \cN$ and a coupling function $c \colon \cM \times \cN \to \ExtR$.
	His definition of $c$-conjugate is then
	\begin{equation*}
		f^c(y)
		\coloneqq
		\sup_{x \in \cM} c(x,y) - f(x)
		\quad
		\text{for }
		y
		\in
		\cN
		.
	\end{equation*}
	This can be related to our approach of nonlinear Fenchel conjugates \eqref{eq:general-nonlinear-Fenchel-conjugate} as follows:
	\begin{enumerate}
		\item
			Given $c$ and $\cN$, we can define $\varphi_y \coloneqq c(\cdot,y)$ to obtain $\genconj f(\varphi_y) = f^c(y)$ for all $y \in \cN$.
			Choosing $\cF \coloneqq \setDef{\varphi_y}{y \in \cN}$ yields $\restr{\genconj{f}}{\cF} \equiv f^c$.

		\item
			Vice versa, given $\cF \subseteq \ExtF{\cM}$, we may choose $\cN \coloneqq \cF$ and $c(x,\varphi) \coloneqq \varphi(x)$.
			Then again, we have $\genconj f(\varphi) = f^c(\varphi)$ for all $\varphi \in \cF$.
	\end{enumerate}
	That is, \cite{MartinezLegaz:2005:1} generalizes the duality pairing while we generalize the set of test functions.
\end{remark}

\begin{proposition}
	\label{proposition:pullback}
	Suppose that $A \colon \cM \to \cN$ is some mapping, $\cF \subseteq \ExtF{\cN}$ and denote
	\begin{equation*}
		\genadj{A}(\cF)
		\coloneqq
		\setDef{\genadj{A}(\varphi)}{\varphi \in \cF}
		=
		\setDef{\varphi \circ A}{\varphi \in \cF} \subseteq \ExtF{\cM}
		.
	\end{equation*}
	Then the following holds:
	\begin{enumerate}
		\item \label[statement]{item:pullback:1}
			$\genadj{A}(\regularization{f}) \le \regularization[\genadj{A}(\cF)]{\genadj{A}(f)}$.
			In case $A$ is surjective, we have equality.

		\item \label[statement]{item:pullback:2}
			$\genadj{A}(\supcl{\cF}) = \supcl{\genadj{A}(\cF)}$.
	\end{enumerate}
\end{proposition}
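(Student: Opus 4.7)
The plan is to exploit that the pullback is compatible with constant shifts, $\genadj{A}(\varphi + c) = \varphi \circ A + c$, hence $\genadj{A}(\affine{\cF}) \subseteq \affine{\genadj{A}(\cF)}$, and monotone, $\varphi \le f \Rightarrow \genadj{A}(\varphi) \le \genadj{A}(f)$. Combining these, every $\varphi \in \affine{\cF}$ with $\varphi \le f$ produces a candidate $\varphi \circ A \in \affine{\genadj{A}(\cF)}$ admissible in the definition \eqref{eq:defreg} of $\regularization[\genadj{A}(\cF)]{\genadj{A}(f)}$, and taking the pointwise supremum of $\varphi \circ A$ over all such $\varphi$ gives the desired inequality $\genadj{A}(\regularization{f}) \le \regularization[\genadj{A}(\cF)]{\genadj{A}(f)}$.

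For the equality under surjectivity, I need the reverse direction: every admissible $\psi$ for $\regularization[\genadj{A}(\cF)]{\genadj{A}(f)}$ arises from an admissible $\tilde\varphi$ for $\regularization{f}$. By definition of $\affine{\genadj{A}(\cF)}$, any such $\psi$ has the form $\psi = (\varphi + c) \circ A = \tilde\varphi \circ A$ with $\tilde\varphi \coloneqq \varphi + c \in \affine{\cF}$, and the inequality $\psi \le \genadj{A}(f)$ reads $\tilde\varphi(A(x)) \le f(A(x))$ for all $x \in \cM$. A priori this constrains $\tilde\varphi$ only on the image $A(\cM) \subseteq \cN$; surjectivity is exactly what upgrades it to $\tilde\varphi \le f$ on all of $\cN$, making $\tilde\varphi$ admissible in the supremum defining $\regularization{f}$. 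This single step is where I expect the subtlety to lie, and it pinpoints why Part 2 must be handled differently to achieve equality without surjectivity.

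\textbf{Proof plan for Part 2.} I will use the alternative characterization $\supcl{\cF} = \setDef{\sup \cE}{\cE \subseteq \affine{\cF}}$ from \cref{pro:alternative_definition_to_supclosure} and the fact that $\genadj{A}$ commutes with pointwise suprema, since $\sup_{\varphi \in \cE} (\varphi \circ A)(x) = (\sup_{\varphi \in \cE} \varphi)(A(x))$. For $\genadj{A}(\supcl{\cF}) \subseteq \supcl{\genadj{A}(\cF)}$, write any $g \in \supcl{\cF}$ as $g = \sup_{\varphi \in \cE} \varphi$ with $\cE \subseteq \affine{\cF}$, and push the pullback through the supremum to obtain $\genadj{A}(g) = \sup_{\varphi \in \cE} \genadj{A}(\varphi)$, which is a pointwise supremum over a subset of $\affine{\genadj{A}(\cF)}$. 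For the reverse inclusion, write $h = \sup_{\psi \in \cE'} \psi$ with $\cE' \subseteq \affine{\genadj{A}(\cF)}$, choose for each $\psi$ a lift $\tilde\varphi_\psi \in \affine{\cF}$ with $\psi = \tilde\varphi_\psi \circ A$, and set $g \coloneqq \sup_\psi \tilde\varphi_\psi \in \supcl{\cF}$. Commuting $\genadj{A}$ with the supremum then yields $\genadj{A}(g) = h$. The lifting step succeeds without surjectivity because we only need equality of $\tilde\varphi_\psi \circ A$ and $\psi$ on $\cM$, not any comparison on all of $\cN$ as in Part 1.
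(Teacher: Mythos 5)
Your proposal is correct and follows essentially the same route as the paper: part 1 rests on the same identification of $\genadj{A}(\affine{\cF})$ with $\affine{\genadj{A}(\cF)}$ together with the observation that surjectivity of~$A$ upgrades $\varphi \circ A \le f \circ A$ to $\varphi \le f$, and part 2's reverse inclusion constructs the same lift on~$\cN$ as a pointwise supremum of elements of $\affine{\cF}$. The only cosmetic difference is that for the inclusion $\genadj{A}(\supcl{\cF}) \subseteq \supcl{\genadj{A}(\cF)}$ the paper invokes part 1 and the fixed-point characterization \eqref{eq:sup-closure:3}, whereas you commute $\genadj{A}$ with the supremum directly via \eqref{eq:sup-closure:2}; both are immediate from \cref{pro:alternative_definition_to_supclosure}.
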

\begin{proof}
	\Cref{item:pullback:1} follows from the definitions by a straightforward calculation:
	\begin{align*}
		\paren[big](){\genadj{A}(\regularization{f})}(x)
		&
		=
		\paren[big](){\regularization{f} \circ A}(x)
		\\
		&
		=
		\paren[big](){\sup \setDef{\varphi}{\varphi \in \affine{\cF}, \; \varphi \le f} \circ A}(x)
		\\
		&
		=
		\sup \setDef[big]{(\varphi \circ A)(x)}{\varphi \in \affine{\cF}, \; \varphi \le f}
		\\
		&
		\le
		\sup \setDef[big]{(\varphi \circ A)(x)}{\varphi \in \affine{\cF}, \; \varphi \circ A \le f \circ A}
		\\
		&
		=
		\sup \setDef[big]{\psi(x)}{\psi \in \genadj{A}(\affine{\cF}), \; \psi \le \genadj{A}(f)}
		\\
		&
		=
		\sup \setDef[big]{\psi(x)}{\psi \in \affine{\genadj{A}(\cF)}, \; \psi \le \genadj{A}(f)}
		=
		\regularization[\genadj{A}(\cF)]{\genadj{A}(f)}(x)
		.
	\end{align*}%
	The inequality becomes an equality if and only if $\varphi \le f \Leftrightarrow \varphi \circ A \le f \circ A$, which is true in case $A$ is surjective.

	We now turn to \cref{item:pullback:2}.
	Suppose that $g \in \genadj{A}(\supcl{\cF})$, \ie, $g = \genadj{A}(f)$ for some $f \in \supcl{\cF}$.
	The latter condition is equivalent to $f = \regularization{f}$ by \eqref{eq:sup-closure:3}.
	We thus obtain $g = \genadj{A}(\regularization{f})$.
	By \cref{item:pullback:1}, we have $g = \genadj{A}(\regularization{f}) \le \regularization[\genadj{A}(\cF)]{\genadj{A}(f)} = \regularization[\genadj{A}(\cF)]{g}$.
	Thus by \cref{proposition:elementary-properties-of-F-regularization}~\ref{item:elementary-properties:3} $g=\regularization[\genadj{A}(\cF)]{g}$ and
	we conclude $g \in \supcl{\genadj{A}(\cF)}$, \ie, $\genadj{A}(\supcl{\cF}) \subseteq \supcl{\genadj{A}(\cF)}$.

	For the reverse inclusion, consider some function $g \in \supcl{\genadj{A}(\cF)}$, \ie,
	\begin{equation*}
		\begin{aligned}
			g
			=
			\regularization[\genadj{A}(\cF)]{g}
			&
			=
			\sup \setDef[big]{\varphi}{\varphi \in \affine{\genadj{A}(\cF)}, \; \varphi \le g}
			\\
			&
			=
			\sup \setDef[big]{\psi \circ A}{\psi \in \affine{\cF}, \; \psi \circ A \le g}
			.
		\end{aligned}
	\end{equation*}
	The pointwise evaluation of $g$ thus yields
	\begin{equation*}
		g(x)
		=
		\sup \setDef[big]{\psi(A(x))}{\psi \in \affine{\cF}, \; \psi \circ A \le g}
		\text{ for }
		x
		\in
		\cM
		.
	\end{equation*}
	We now define
	\begin{equation*}
		f(y)
		\coloneqq
		\sup \setDef[big]{\psi(y)}{\psi \in \affine{\cF}, \; \psi \circ A \le g}
		\text{ for }
		y
		\in
		\cN
	\end{equation*}
	and observe that $f \in \supcl{\cF}$ by \eqref{eq:sup-closure:2}.
	Since $g(x) = f(A(x))$ holds for all $x \in \cM$, we have $g = f \circ A = \genadj{A}(f)$.
	This shows $g \in \genadj{A}(\supcl{\cF})$ and thus $\supcl{\genadj{A}(\cF)} \subseteq \genadj{A}(\supcl{\cF})$.
\end{proof}

\subsection{\texorpdfstring{$\cF$}{𝓕}-Biconjugates}
\label{subsection:F-biconjugation}

In this section we define an appropriate notion of biconjugation that generalizes the corresponding classical concept from the linear case.
In the linear setting, the biconjugate~$\biconjugate{f} \in \ExtF{V}$ conincides with the largest convex \lsc minorant of~$f \in \ExtF{V}$.
As we will see, this result carries over to the nonlinear case.

To define the biconjugate, consider the restriction of $\genconj{f} \in \ExtF{\cM}$ to~$\cF$, \ie,
\begin{equation*}
	\restr{\genconj{f}}{\cF}
	\colon
	\cF
	\to \ExtR
\end{equation*}
and take the conjugate of this mapping:
\begin{equation*}
	\genconj{(\restr{\genconj{f}}{\cF})}
	\colon
	\ExtF{\cF} \to \ExtR
	.
\end{equation*}
It is clear that all results up to now can be applied to $\genconj{(\restr{\genconj{f}}{\cF})}$ with $\ExtF{\cF}$ in place of~$\cM$.
However, $\ExtF{\cF}$ can be a very large domain of definition.
To be able to compare $\genconj{(\restr{\genconj{f}}{\cF})}$ with~$f$, we have to restrict the domain again.
To this end, we consider evaluation (Dirac) functions
\begin{equation*}
	\begin{aligned}
		\delta_x
		\colon
		\ExtF{\cM}
		&
		\to
		\ExtR
		\\
		\varphi
		&
		\mapsto
		\delta_x(\varphi)
		\coloneqq
		\varphi(x)
		.
	\end{aligned}
\end{equation*}
Clearly, the restriction of $\delta_x$ to a linear subspace $\cF \subseteq \realF(\cM)$ is a linear function on~$\cF$ and continuous with respect to pointwise convergence.
We have the nonlinear canonical embedding of~$\cM$ into the algebraic dual space of~$\cF$ via
\begin{equation}
	\label{eq:canemb}
	\begin{aligned}
		\bidualembedding{\cM}{\algebraicdualSpace{\cF}}
		\colon
		\cM
		&
		\to
		\algebraicdualSpace{\cF}
		\\
		x
		&
		\mapsto \delta_x
		.
	\end{aligned}
\end{equation}

\begin{definition}
	\label{definition:nonlinear-biconjugate}
	Suppose that $\cF$ is a linear subspace of~$\realF(\cM)$.
	Then we define the $\cF$-biconjugate $\bigenconj{f}{\cF}$ of $f \in \ExtF{\cM}$ as $\bigenconj{f}{\cF} \coloneqq \genconj{(\restr{\genconj{f}}{\cF})} \circ \bidualembedding{\cM}{\algebraicdualSpace{\cF}}$, \ie,
	\begin{equation}
		\label{eq:nonlinear-biconjugate}
		\begin{aligned}
			\bigenconj{f}{\cF}
			\colon
			\cM
			&
			\to
			\ExtR
			\\
			x
			&
			\mapsto
			\genconj{(\restr{\genconj{f}}{\cF})}(\delta_x)
			.
		\end{aligned}
	\end{equation}
\end{definition}
When written in the form~\eqref{eq:general-nonlinear-Fenchel-conjugate:alternative:3}, this definition reads
\begin{equation}
	\label{eq:bicon2}
	\bigenconj{f}{\cF}(x)
	=
	\sup \setDef[big]{\varphi(x) - c}{(\varphi,c) \in \epi \restr{\genconj{f}}{\cF}}
	.
\end{equation}

\begin{theorem}
	\label{theorem:biconjugate}
	Suppose that $\cF$ is a linear subspace of~$\realF(\cM)$.
	The $\cF$-biconjugate satisfies $\bigenconj{f}{\cF} = \regularization{f}$ for all $f \in \ExtF{\cM}$.
\end{theorem}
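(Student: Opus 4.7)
The plan is to start from the geometric form~\eqref{eq:bicon2} of the biconjugate,
\begin{equation*}
  \bigenconj{f}{\cF}(x) = \sup \setDef{\varphi(x) - c}{\varphi \in \cF,\; c \in \R,\; c \ge \genconj{f}(\varphi)},
\end{equation*}
and reparametrize the pairs $(\varphi, c)$ as shifts $\psi \coloneqq \varphi - c \in \affine{\cF}$, with the goal of landing directly on the definition~\eqref{eq:defreg} of $\regularization{f}(x)$.

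The key step is to prove the pointwise equivalence
\begin{equation*}
  c \ge \genconj{f}(\varphi) \quad \Longleftrightarrow \quad \varphi - c \le f \text{ on } \cM
\end{equation*}
for every $\varphi \in \cF \subseteq \realF(\cM)$ and $c \in \R$. One direction is immediate from~\eqref{eq:general-nonlinear-Fenchel-conjugate:alternative:2}, which expresses $\genconj{f}(\varphi)$ as the infimum of the set of constants $c'$ satisfying $\varphi \le f + c'$. For the other direction, I would observe that this set is upward closed and, in the case $\genconj{f}(\varphi) \in \R$, contains its own infimum: since $\varphi$ is real-valued, the inequality $\varphi(x) - f(x) \le \genconj{f}(\varphi)$ holds pointwise on $\rdomain{\varphi - f}$ and is automatic where $f(x) = +\infty$. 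The edge case $\genconj{f}(\varphi) = -\infty$ is handled by \cref{proposition:elementary-properties}~\ref{item:elementary-properties:1}: since $\varphi$ is real-valued, $\max\set{-\varphi, f} \equiv +\infty$ forces $f \equiv +\infty$, in which case $\varphi - c \le f$ holds trivially for every $c \in \R$. The case $\genconj{f}(\varphi) = +\infty$ is vacuous on both sides and contributes nothing.

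With this equivalence, the substitution $\psi \coloneqq \varphi - c$ sets up a surjection $\cF \times \R \to \affine{\cF}$, transforming the epigraph constraint into $\psi \le f$ and the objective $\varphi(x) - c$ into $\psi(x)$. Therefore
\begin{equation*}
  \bigenconj{f}{\cF}(x) = \sup \setDef{\psi(x)}{\psi \in \affine{\cF},\; \psi \le f} = \regularization{f}(x)
\end{equation*}
directly by~\eqref{eq:defreg}. I do not expect a deeper obstacle than the bookkeeping in the three cases $\genconj{f}(\varphi) \in \R$, $+\infty$, $-\infty$; the whole argument is essentially the observation that the epigraph of $\restr{\genconj{f}}{\cF}$ is a reparametrization of the set of $\affine{\cF}$-minorants of $f$. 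Notably, the hypothesis that $\cF$ is a linear subspace is only needed to justify the definition of $\bigenconj{f}{\cF}$ via the canonical embedding~\eqref{eq:canemb} and plays no further role in the identification with $\regularization{f}$.
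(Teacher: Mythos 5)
Your proposal is correct and follows essentially the same route as the paper: both start from the epigraph form~\eqref{eq:bicon2}, reparametrize the pairs $(\varphi,c)$ as shifted test functions $\psi = \varphi - c \in \affine{\cF}$, and identify the epigraph condition with the minorant condition $\psi \le f$. The only difference is presentational: the paper obtains that identification by citing \eqref{eq:FDconstant} together with the equivalence $\psi \le f \Leftrightarrow \genconj{f}(\psi) \le 0$ from~\eqref{eq:elementary-properties:3}, whereas you re-derive the same equivalence directly from~\eqref{eq:general-nonlinear-Fenchel-conjugate:alternative:2} with an explicit (and correct) case analysis of $\genconj{f}(\varphi) \in \R$, $+\infty$, $-\infty$.
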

\begin{proof}
	We write the equivalent form \eqref{eq:bicon2} of the biconjugate~$\bigenconj{f}{\cF}$ in a different way:
	\begin{equation}
		\label{eq:bicon3}
		\begin{aligned}
			\bigenconj{f}{\cF}(x)
			&
			=
			\sup \setDef[big]{\varphi(x) - c}{\varphi \in \cF, \; c \in \R, \; \genconj{f}(\varphi) - c \le 0}
			\\
			&
			=
			\sup \setDef[big]{\varphi(x)}{\varphi \in \affine{\cF}, \; \genconj{f}(\varphi) \le 0}
			.
		\end{aligned}
	\end{equation}
	The second equality uses \eqref{eq:FDconstant}.
	We thus observe that $\bigenconj{f}{\cF}$ is a pointwise supremum of functions $\varphi \in \affine{\cF}$.
	By~\eqref{eq:elementary-properties:3}, we have $\varphi \le f \Leftrightarrow \genconj{f}(\varphi) \le 0$.
	Hence the characterizations of $\regularization{f}$ via~\eqref{eq:defreg} and $\bigenconj{f}{\cF}$ via~\eqref{eq:bicon3} coincide.
\end{proof}

We now investigate the triple $\cF$-conjugate.
\begin{corollary}
	\label{corollary:triconjugate}
	Suppose that $\cF$ is a linear subspace of~$\realF(\cM)$.
	Then $\genconj{(\bigenconj{f}{\cF})}(\varphi) = \genconj{f}(\varphi)$ holds for any $\varphi \in \supcl{\cF}$.
\end{corollary}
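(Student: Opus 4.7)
The plan is to observe that this corollary follows almost immediately by chaining together two results already established in this section: the biconjugation theorem (\cref{theorem:biconjugate}) identifying $\bigenconj{f}{\cF}$ with $\regularization{f}$, and \cref{proposition:Fenchel_of_Regularization} which states exactly $\genconj{(\regularization{f})}(\varphi) = \genconj{f}(\varphi)$ on $\supcl{\cF}$ (equation~\eqref{eq:Fenchelreg}).

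First I would check that the hypotheses of \cref{proposition:Fenchel_of_Regularization} are met in the setting of this corollary. That proposition requires $\cF \subseteq \extF{\cM}$. Here $\cF$ is a linear subspace of~$\realF(\cM)$, and since $\realF(\cM) \subseteq \extF{\cM}$, the required inclusion holds automatically. Thus \eqref{eq:Fenchelreg} applies verbatim: for every $\varphi \in \supcl{\cF}$, we have $\genconj{(\regularization{f})}(\varphi) = \genconj{f}(\varphi)$.

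Second, I would invoke \cref{theorem:biconjugate}, which gives $\bigenconj{f}{\cF} = \regularization{f}$ as functions in $\ExtF{\cM}$. Substituting this identity into the previous display yields the desired conclusion
\begin{equation*}
	\genconj{(\bigenconj{f}{\cF})}(\varphi)
	=
	\genconj{(\regularization{f})}(\varphi)
	=
	\genconj{f}(\varphi)
	\quad
	\text{for all }
	\varphi \in \supcl{\cF}
	.
\end{equation*}

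There is no real obstacle here; the corollary is a straightforward consequence of the two results preceding it. The only conceptual point worth noting is that applying \cref{proposition:Fenchel_of_Regularization} is legitimate because linear subspaces of $\realF(\cM)$ trivially consist of (finite) real-valued and hence extended real-valued functions, so no additional hypothesis verification beyond this inclusion is needed.
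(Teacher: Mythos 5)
Your proposal is correct and follows exactly the paper's own argument: the corollary is obtained by combining \cref{theorem:biconjugate} with \cref{proposition:Fenchel_of_Regularization}, and your verification that $\cF \subseteq \realF(\cM) \subseteq \extF{\cM}$ makes the hypothesis check explicit where the paper leaves it implicit.
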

\begin{proof}
	This is a direct consequence of \Cref{theorem:biconjugate} and \Cref{proposition:Fenchel_of_Regularization}.
\end{proof}

\begin{remark}
	Our results generalize the notion of the classical biconjugate on locally convex or normed linear spaces~$V$.
	In that setting, the classical biconjugate $\biconjugate{f} \colon V \to \ExtR$ is the composition of $\conjugate{(\conjugate{f})} \colon \bidualSpace{V} \to \ExtR$ with the linear canonical embedding, in short, $\biconjugate{f} \coloneqq \conjugate{(\conjugate{f})} \circ \bidualembedding{V}{\bidualSpace{V}}$.
	In other words, $\biconjugate{f} = \bigenconj{f}{\dualSpace{V}}$ holds according to \cref{definition:nonlinear-biconjugate}.
\end{remark}

\section{Nonlinear Fenchel Conjugates on Manifolds}
\label{section:nonlinear-Fenchel-conjugates:manifolds}

So far we have considered the nonlinear Fenchel conjugate $\genconj{f}$ to act on the very general class of arguments $\varphi \in \ExtF{\cM}$ over general non-empty sets~$\cM$.
We saw that elementary properties of the classical Fenchel conjugate generalize to this setting.
To obtain further insight, in particular into the connection between Fenchel conjugates and subdifferentials, however, we have to restrict the arguments~$\varphi$ to a suitable subspace of smooth functions.
Thus, a natural setting for this analysis involves sufficiently smooth manifolds.

In this section we consider a $C^k$-Banach manifold~$\cM$ for some $k \in \N_0 \cup \set{+\infty}$, modeled on a real Banach space~$X$, and corresponding spaces of smooth functions $\cF = C^k(\cM)$.
In case $\cM$ admits $C^k$-partitions of unity (\cref{subsection:smooth-partitions-of-unity}), it will turn out that $\supcl{C^k(\cM)}$ consists of all lower semi-continuous functions on $\cM$; see \cref{subsection:nonlinear-Fenchel-conjugates:manifolds:smooth-functions}.

Moreover, for $k \ge 1$, we can prove a version of the well known Fenchel-Young theorem, connecting the viscosity Fréchet subdifferential with the nonlinear Fenchel conjugate (\cref{subsection:nonlinear-Fenchel-conjugates:manifolds:conjugates-and-subdifferentials}).
Since a real Banach space is also $C^\infty$-Banach manifold, this special case is naturally included throughout this section.
\Cref{subsection:nonlinear-Fenchel-conjugates:manifolds:comparison} provides a comparison to previous definitions of Fenchel conjugates on manifolds.

\subsection{Smooth Partitions of Unity}%
\label{subsection:smooth-partitions-of-unity}

The existence of smooth partitions of unity is a fundamental assumption on manifolds that is usually required to globalize local results.
We will need this property in a couple of instances and recall its definition for convenience of the reader.

\begin{definition}
	\label{definition:partition-of-unity}
	Suppose that $k \in \N_0 \cup \set{+\infty}$ and $\cM$ is a $C^k$-Banach manifold.
	\begin{enumerate}
		\item
			Suppose that $\cC \coloneqq \setDef[big]{\cO_\lambda}{\lambda \in \Lambda}$ is an open cover of~$\cM$.
			A family of $C^k$-functions $\varphi_\lambda \in C^k(\cM,[0,1])$, $\lambda \in \Lambda$, is said to be a locally finite $C^k$-partition of unity of~$\cM$ subordinate to~$\cC$ if
			\begin{itemize}
				\item
					$\supp \varphi_\lambda \subseteq \cO_\lambda$ for all $\lambda \in \Lambda$,

				\item
					every $x \in \cM$ has a neighborhood~$\cU$ such that $\cU \cap \supp \varphi_\lambda = \emptyset$ for all but a finite number of $\lambda \in \Lambda$,

				\item
					the finite sum $\sum_{\lambda \in \Lambda} \varphi_\lambda(x) = 1$ for all $x \in \cM$.
			\end{itemize}

		\item
			$\cM$ is said to admit locally finite $C^k$-partitions of unity if for any open cover $\cC \coloneqq \setDef{\cO_\lambda}{\lambda \in \Lambda}$ of~$\cM$, there exists a locally finite $C^k$-partition of unity subordinate to $\cC$.
	\end{enumerate}
\end{definition}
In what follows, we will simply speak of $C^k$-partitions of unity without explicitly mentioning local finiteness.

Continuous partitions of unity ($k = 0$) exist in paracompact topological spaces, in particular in metrizable spaces, and are thus purely a topic of topology.
The existence of $C^k$-partitions of unity on Banach manifolds has been studied in \cite{BonicFrampton:1966:1}.
It has been shown that a $C^k$-Banach manifold~$\cM$ admits $C^k$-partitions of unity if $\cM$ is separable and is modeled on a Banach space~$X$ that has an equivalent norm of class~$C^k$ on $X \setminus \set{0}$.
In this case, it is known that $C^k(\cM)$ is dense in $C(\cM)$.
Examples for Banach spaces~$X$ of this class for $k = 1$ include separable Banach spaces with separable dual spaces; see \cite[Proposition~3]{BonicFrampton:1966:1}.
This class comprises, in particular, separable reflexive Banach spaces.
For $k = 2$, suitable Banach spaces~$X$ include separable Hilbert spaces.

A function $f \in \ExtF{\cM}$ is said to be lower semi-continuous at $x \in \cM$ if, for each $\varepsilon > 0$, there is a neighborhood~$\cU$ of~$x$ such that $f(y) \ge f(x) - \varepsilon$ for all $y \in \cU$.
Let us denote by
\begin{equation*}
	\lscextF{\cM}
	\coloneqq
	\setDef{f \colon \cM \to \extR}{f \text{ is lower semi-continuous at every } x \in \cM}
	.
\end{equation*}

\begin{lemma}[A $C^k$-Sandwich Lemma]
	\label{lemma:Ck-sandwich-lemma}
	Suppose that $k \in \N_0 \cup \set{\infty}$ and $\cM$ is a $C^k$-Banach manifold that admits $C^k$-partitions of unity.
	Suppose that $\cO \subseteq \cM$ is some open set, $w \in C^k(\cO)$ and $-a, b \in \lscextF{\cM}$ such that $a \le w \le b$ on~$\cO$ and $a < b$ on $\cM \setminus \cO$.
	Then, given a closed set $\cA \subseteq \cO$, there exists a function $h \in C^k(\cM)$ satisfying $a \le h \le b$ on~$\cM$, $a < h < b$ on $\cM \setminus \cO$, and $h = w$ on~$\cA$.
\end{lemma}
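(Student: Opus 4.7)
The plan is to construct $h$ by the standard device of patching local models via a $C^k$-partition of unity: the function $w$ itself will serve as the local model on $\cO$, while outside $\cO$ we use real constants strictly squeezed between $a$ and $b$.

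First I would build the open cover. For each point $x \in \cM \setminus \cO$ the hypothesis $a(x) < b(x)$, together with the semi-continuity assumptions (so that $a$ is upper semi-continuous with values in $\R \cup \set{-\infty}$ and $b$ is lower semi-continuous with values in $\R \cup \set{+\infty}$), lets me pick a finite constant $c_x \in \interval(){a(x)}{b(x)}$ and an open neighborhood $\cU_x$ of $x$, disjoint from the closed set $\cA \subseteq \cO$, such that
\[
	a(y) < c_x < b(y) \quad \text{for all } y \in \cU_x.
\]
Then $\set{\cO} \cup \setDef{\cU_x}{x \in \cM \setminus \cO}$ is an open cover of $\cM$, and I would associate to each member a local model: $w$ on $\cO$, and the constant $c_x$ on $\cU_x$.

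Next, I would invoke the assumption that $\cM$ admits $C^k$-partitions of unity to obtain a locally finite partition $(\rho_i)_{i \in I}$ subordinate to this cover, assign to each index $i$ the corresponding local model $h_i$ (equal to $w$ when $\supp \rho_i \subseteq \cO$, and equal to the appropriate constant $c_{x_\lambda}$ when $\supp \rho_i \subseteq \cU_{x_\lambda}$), and set
\[
	h \coloneqq \sum_{i \in I} \rho_i \, h_i,
\]
where each term of the form $\rho_i \, w$ is extended by $0$ outside $\cO$; this extension is $C^k$ on $\cM$ because $\supp \rho_i \subseteq \cO$, and local finiteness of the partition ensures $h \in C^k(\cM)$.

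Finally, I would verify the three conclusions. On $\cA$, every $\rho_i$ whose support lies in some $\cU_{x_\lambda}$ vanishes (since $\cU_{x_\lambda} \cap \cA = \emptyset$), so the remaining weights sum to one and $h$ reduces to $w$. Pointwise on $\cM$, the value $h(y)$ is a convex combination of real numbers each bracketed by $a(y) \le h_i(y) \le b(y)$, giving $a \le h \le b$ globally. On $\cM \setminus \cO$ only the constant models contribute, and a convex combination of values strictly between $a(y)$ and $b(y)$ remains strictly between, which yields $a < h < b$. I expect the principal technical obstacle to be the simultaneous use of upper semi-continuity of $a$ and lower semi-continuity of $b$ in constructing the neighborhoods $\cU_x$ (including the extended-real cases $a(x) = -\infty$ or $b(x) = +\infty$); the gluing step itself is routine once the cover is in place.
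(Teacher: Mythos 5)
Your construction is correct and is essentially the paper's own argument: cover $\cM$ by $\cO$ together with open sets on which a real constant is strictly sandwiched between $a$ and $b$, take a subordinate $C^k$-partition of unity, and glue the constants with $w\cdot g_\cO$ extended by zero. The only cosmetic difference is that the paper indexes the auxiliary open sets by the constant values themselves, via $\cG_r = \setDef{x \in \cM \setminus \cA}{a(x) < r < b(x)}$ for $r \in \R$, rather than by points of $\cM \setminus \cO$; both choices yield the same cover properties and the same verification.
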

\begin{proof}
	The result is a slight extension of Dowker's sandwich theorem, which was originally formulated for $k = 0$ and $\cO = \emptyset$; see, \eg, \cite[Theorem~VIII.4.3]{Dugundji:1978:1}.

	Given $r \in \R$, consider the open sets $\cG_r \coloneqq \setDef{x \in \cM \setminus \cA}{a(x) < r < b(x)}$.
	The openness of $\cG_r$ follows from the closedness of $\cA$ and the lower semi-continuity of $-a$ and $b$.
	Therefore, $\setDef{\cG_r}{r \in \R} \cup \set{\cO}$ is an open cover of~$\cM$, since $\cA \subseteq \cO$ and $a < b$ holds on $\cM \setminus \cO$.
	(We could make this cover countable by restricting to $r \in \Q$.)

	Suppose that $\setDef{g_r}{r \in \R} \cup \set{g_\cO}$ is a $C^k$-partition of unity, subordinate to this open cover.
	Denote by $w \, g_\cO$ the extension by zero from $\cO$ to $\cM$ of the function $x \mapsto w(x) \, g_\cO(x)$.
	This function is of class~$C^k$ on~$\cO$ by the product rule, and for $x \in \cM \setminus \cO \subseteq \cM \setminus \supp g_\cO$, there is a neighborhood of~$x$ on which $w \, g_\cO$ vanishes.
	Thus $w \, g_\cO \in C^k(\cM)$.

	Define the function $h \in C^k(\cM)$ via the (locally finite) sum:
	\begin{equation*}
		h(x)
		\coloneqq
		\sum_{r \in \R} r \, g_r(x) + w \, g_\cO(x)
		.
	\end{equation*}
	Due to $\cA \cap \cG_r = \emptyset$ and thus $g_r \equiv 0$ on~$\cA$ for all $r \in \R$, we have $h = w$ on~$\cA$.
	Moreover, since $\sum_{r \in \R} g_r + g_\cO \equiv 1$, we observe $h \le \paren[auto](){\sum_{r \in \R} g_r + g_\cO} \, b = b$ and $h \ge \paren[auto](){\sum_{r \in \R} g_r + g_\cO} \, a = a$.
	In both estimates, strict inequality holds if $\sum_{r \in \R} g_r(x) \neq 0$.
	This is the case, in particular, for $x \not \in \cO$.
\end{proof}

\begin{corollary}
	\label{corollary:Ck-sandwich-lemma}
	Suppose that $k \in \N_0 \cup \set{\infty}$ and $\cM$ is a $C^k$-Banach manifold that admits $C^k$-partitions of unity.
	Assume that $\cO \subseteq \cM$ is some open set and $f \in \lscextF{\cM}\cap C^k(\cO)$.
	Then, given a closed set $\cA \subseteq \cO$, $x_0 \in \cM$ and $\varepsilon > 0$, there is a function $\varphi \in C^k(\cM)$ satisfying $\varphi \le f$ on $\cM$, $\varphi = f$ on $\cA$ and $\varphi(x_0) > f(x_0) - \varepsilon$.
\end{corollary}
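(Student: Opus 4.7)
The plan is to apply the $C^k$-Sandwich Lemma (\cref{lemma:Ck-sandwich-lemma}) with upper bound $b \coloneqq f$, interior datum $w \coloneqq \restr{f}{\cO}$, and an upper semi-continuous lower bound $a$ still to be specified; the resulting $h \in C^k(\cM)$ will then serve as~$\varphi$. Regularity of~$b$ and~$w$ comes from $f \in \lscextF{\cM} \cap C^k(\cO)$, and the conclusions $h \le f$ on~$\cM$ and $h = f$ on whichever closed subset of~$\cO$ we feed to the lemma are automatic. Only the condition $h(x_0) > f(x_0) - \varepsilon$ demands care and dictates both the choice of~$a$ and whether we enlarge~$\cA$.

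If $x_0 \in \cO$, enlarge $\cA$ to $\cA' \coloneqq \cA \cup \set{x_0}$, still a closed subset of~$\cO$ since singletons are closed in the Hausdorff manifold~$\cM$, and take $a \equiv -\infty$. Then $-a \equiv +\infty \in \lscextF{\cM}$ and $a < b$ holds throughout~$\cM$ because $f$ is $\extR$-valued, so \cref{lemma:Ck-sandwich-lemma} produces $h(x_0) = w(x_0) = f(x_0) \in \R$, making the desired strict inequality trivial.

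If instead $x_0 \in \cM \setminus \cO$ (assuming $f(x_0) < +\infty$; the degenerate case $f(x_0) = +\infty$ is handled by the same scheme with an arbitrarily large real constant in place of $f(x_0) - \varepsilon/2$), the lower semi-continuity of~$f$ at~$x_0$ supplies an open neighborhood $\cV \ni x_0$ on which $f > f(x_0) - \varepsilon/2$, and regularity of~$\cM$ then provides a closed neighborhood $\cK \subseteq \cV$ of~$x_0$. Define
\begin{equation*}
	a(x)
	\coloneqq
	\begin{cases}
		f(x_0) - \varepsilon/2 & \text{if } x \in \cK, \\
		-\infty & \text{if } x \notin \cK.
	\end{cases}
\end{equation*}
A direct check shows that~$a$ is upper semi-continuous, satisfies $a \le f$ on~$\cM$, and $a < f$ on~$\cM \setminus \cO$ (using $\cK \subseteq \cV$ on the part of $\cK$ outside~$\cO$, and $a = -\infty < f$ elsewhere). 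Applying \cref{lemma:Ck-sandwich-lemma} with this~$a$ and the original~$\cA$ yields $h$ with $a < h$ on $\cM \setminus \cO$, hence $h(x_0) > a(x_0) = f(x_0) - \varepsilon/2 > f(x_0) - \varepsilon$.

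The main obstacle is the construction of~$a$ in the second case: it must simultaneously be upper semi-continuous, dominated by~$f$ with strict inequality off~$\cO$, and close to $f(x_0)$ at~$x_0$. Closedness of~$\cK$ is the decisive point, since it forces the upper level sets $\setDef{y}{a(y) \ge \alpha}$ to be one of $\emptyset$, $\cK$, or $\cM$, all closed, so that upper semi-continuity holds without further argument.
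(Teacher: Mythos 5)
Your proof is correct and follows essentially the same route as the paper, which applies \cref{lemma:Ck-sandwich-lemma} with the single uniform choice $a \coloneqq -\indicator{\set{x_0}} + f(x_0) - \varepsilon$ (a spike at~$x_0$, $-\infty$ elsewhere), $b \coloneqq f$, $w \coloneqq \restr{f}{\cO}$, and no case distinction. Your case split is in fact a small refinement: for $x_0 \in \cO$ the paper's choice only yields $h(x_0) \ge f(x_0) - \varepsilon$ (the lemma's strict inequality is guaranteed only on $\cM \setminus \cO$), whereas adjoining $x_0$ to $\cA$ forces $h(x_0) = f(x_0)$; and substituting a real constant for $f(x_0) - \varepsilon/2$ deals with the degenerate value $f(x_0) = +\infty$, for which the paper's $a$ would leave $\lscextF{\cM}$ (and for which the corollary's inequality should be read as ``$\varphi(x_0)$ can be made arbitrarily large'').
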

\begin{proof}
	We apply \cref{lemma:Ck-sandwich-lemma} with the choices $b \coloneqq f$, $a \coloneqq - \indicator{\set{x_0}} + f(x_0) - \varepsilon$ and $w \coloneqq \restr{f}{\cO}$ to obtain $\varphi \coloneqq h$ with the desired properties.
\end{proof}

\subsection{Restriction to Smooth Test Functions}%
\label{subsection:nonlinear-Fenchel-conjugates:manifolds:smooth-functions}

As announced in the beginning of \cref{section:nonlinear-Fenchel-conjugates:manifolds}, we now restrict the Fenchel conjugate $\genconj{f}$ to the domain
\begin{equation*}
	\cF
	\coloneqq
	C^k(\cM)
	\subseteq
	\realF(\cM)
	,
\end{equation*}
the linear space of $k$-times continuously differentiable real-valued functions on the $C^k$-Banach manifold~$\cM$.
Compared to the classical setting where we have a linear space~$V$ instead of~$\cM$, here the space $C^k(\cM)$ of smooth functions plays the role of the topological dual space~$\dualSpace{V}$ of continuous linear functions.

We begin with a result on the $\sup$-closure of $C^k$-functions.
\begin{proposition}
	\label{proposition:sup-closure-of-Ck-functions}
	Suppose that $k \in \N_0 \cup \set{\infty}$ and $\cM$ is a $C^k$-Banach manifold that admits $C^k$-partitions of unity.
	Then
	\begin{equation}
		\label{eq:sup-closure-of-Ck-functions}
		\supcl{C^k(\cM)}
		=
		\lscextF{\cM} \cup \set{-\infty}
		.
	\end{equation}
	Moreover, we have
	\begin{equation*}
		\genconj{f}(\psi)
		=
		\sup \setDef[big]{\genconj{f}(\varphi)}{\varphi \in C^k(\cM), \; \varphi \le \psi}
		\quad
		\text{for }
		\psi
		\in
		\lscextF{\cM}
		.
	\end{equation*}
	Hence, $\genconj{f}(\psi)$ is uniquely determined by $\restr{\genconj{f}}{C^k(\cM)}$.
\end{proposition}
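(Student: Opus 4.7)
The plan is to prove the set equality \eqref{eq:sup-closure-of-Ck-functions} by separate inclusions and then deduce the supremum representation from formula \eqref{eq:supoversubspace} once we know $\psi = \regularization[C^k(\cM)]{\psi}$ for every $\psi \in \lscextF{\cM}$. For the inclusion $\supcl{C^k(\cM)} \subseteq \lscextF{\cM} \cup \set{-\infty}$ I would invoke the characterization \eqref{eq:sup-closure:2}: every element of $\supcl{C^k(\cM)}$ is a pointwise supremum of a subset of $\affine{C^k(\cM)}$, and since $C^k(\cM)$ is closed under addition of real constants one has $\affine{C^k(\cM)} = C^k(\cM)$. A pointwise supremum of continuous real-valued functions is either identically $-\infty$ (when the indexing family is empty) or lower semi-continuous with values in $\extR$, which settles this direction.

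For the nontrivial inclusion $\lscextF{\cM} \subseteq \supcl{C^k(\cM)}$, fix $f \in \lscextF{\cM}$; by \eqref{eq:sup-closure:3} it suffices to prove $f(x_0) \le \regularization[C^k(\cM)]{f}(x_0)$ for every $x_0 \in \cM$. Given $x_0$ and any $M \in \R$ with $M < f(x_0)$, I would apply \cref{lemma:Ck-sandwich-lemma} with $\cO \coloneqq \emptyset$, $\cA \coloneqq \emptyset$, $b \coloneqq f$ and $a \coloneqq -\indicator{\set{x_0}} + M$. One verifies that $-a$ is lsc (it equals $+\infty$ off $x_0$ and $-M$ at $x_0$), that $b$ is lsc by assumption, and that $a < b$ on all of $\cM$ since $a(x_0) = M < f(x_0)$ and $a(x) = -\infty < f(x)$ for $x \neq x_0$ (using $f > -\infty$ because $f \in \lscextF{\cM} \subseteq \extF{\cM}$). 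The lemma then produces $\varphi \coloneqq h \in C^k(\cM)$ with $\varphi \le f$ on $\cM$ and $\varphi(x_0) > a(x_0) = M$, whence $\regularization[C^k(\cM)]{f}(x_0) \ge M$; letting $M$ approach $f(x_0)$ from below and varying $x_0$ gives the desired inequality, and $-\infty \in \supcl{C^k(\cM)}$ is immediate.

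Once the set equality is in hand, every $\psi \in \lscextF{\cM}$ satisfies $\psi = \regularization[C^k(\cM)]{\psi}$, and \eqref{eq:supoversubspace} yields
\[
\genconj{f}(\psi) = \genconj{f}(\regularization[C^k(\cM)]{\psi}) = \sup \setDef{\genconj{f}(\varphi)}{\varphi \in \affine{C^k(\cM)}, \; \varphi \le \psi},
\]
which, using $\affine{C^k(\cM)} = C^k(\cM)$ once more, is the claimed formula; since the right-hand side depends only on $\restr{\genconj{f}}{C^k(\cM)}$, uniqueness is automatic. I expect the main difficulty to be the construction of the tight $C^k$-minorant near $x_0$: encoding the pointwise lower bound through the \emph{indicator} of the singleton $\set{x_0}$ (rather than a localized bump) is essential, because only this choice keeps $-a$ genuinely lsc as required by the sandwich lemma, and it simultaneously handles the case $f(x_0) = +\infty$ by allowing $M$ to range freely in $\R$.
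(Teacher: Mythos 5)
Your proof is correct and follows essentially the same route as the paper: both directions rest on the characterizations \eqref{eq:sup-closure:2} and \eqref{eq:sup-closure:3}, the hard inclusion is obtained from \cref{lemma:Ck-sandwich-lemma} with the lower envelope $a = -\indicator{\set{x_0}} + (\text{const})$ (the paper packages this as \cref{corollary:Ck-sandwich-lemma}), and the supremum formula follows from \eqref{eq:supoversubspace}. Your parametrization by $M < f(x_0)$ instead of $f(x_0)-\varepsilon$ is a minor but welcome refinement that handles $f(x_0)=+\infty$ cleanly; otherwise the arguments coincide.
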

\begin{proof}
	By \eqref{eq:sup-closure:2}, $f \in \supcl{C^k(\cM)}$ is the pointwise supremum of some subset $\cE \subseteq C^k(\cM)$ and thus lower semi-continuous.
	If $\cE = \emptyset$, we have $f \equiv -\infty$.
	In all other cases, we have $f(x) > -\infty$ for all $x \in \cM$.
	Hence $\supcl{C^k(\cM)} \subseteq \lscextF{\cM} \cup \set{-\infty}$.
	For the reverse inclusion, suppose that $f \in \lscextF{\cM}$.
	\Cref{corollary:Ck-sandwich-lemma} with $\cO = \emptyset$ and arbitrary $x_0 \in \cM$ implies
	\begin{equation*}
		f(x_0)
		=
		\sup \setDef[big]{\varphi(x_0)}{\varphi \in C^k(\cM), \; \varphi \le f}
		.
	\end{equation*}
	This in turn yields $f = \regularization[C^k(\cM)]{f}$, hence $f \in \supcl{C^k(\cM)}$ by \eqref{eq:sup-closure:3}.
	Since we also have $-\infty \in \supcl{C^k(\cM)}$, \eqref{eq:sup-closure-of-Ck-functions} is proved.
	The second assertion now follows from~\eqref{eq:supoversubspace}.
\end{proof}

In the evaluation of nonlinear Fenchel conjugates $\genconj{f}(\varphi)$, the test function $\varphi$ may sometimes be given only locally on some open set~$\cV \subseteq \cM$.
In this case we only have access to $\sup_{x \in \cV} \paren\{\}{\varphi(x) - f(x)}$.
This term can be interpreted as the Fenchel conjugate of $f$ modified by the indicator function of~$\cV$, \ie,
\begin{equation}
	\label{eq:local-Fenchel-conjugate:1}
	\genconj{(f + \indicator{\cV})}(\varphi)
	=
	\sup_{x \in \cV} \paren[big]\{\}{\varphi(x) - f(x)}
	.
\end{equation}
The following result shows that such local Fenchel conjugates can be extended to global ones without significant changes in value:
\begin{proposition}
	\label{proposition:extension-of-local-Fenchel-conjugates}
	Suppose that $k \in \N_0 \cup \set{\infty}$ and $\cM$ is a $C^k$-Banach manifold that admits $C^k$-partitions of unity.
	Consider $\cA \subseteq \cV \subseteq \cM$ with $\cA$ closed and $\cV$ open.
	Moreover, suppose $f \in \lscextF{\cM}$, $\varphi \in \lscextF{\overline \cV} \cap C^k(\cV)$ and $\genconj{f}(\varphi) < +\infty$.
	Then, for any $\varepsilon > 0$, there exists $\psi \in C^k(\cM)$ such that $\psi = \varphi$ on $\cA$ and
	\begin{equation*}
		\genconj{(f + \indicator{\cV})}(\varphi)
		-
		\varepsilon
		\le
		\genconj f(\psi)
		\le
		\genconj{(f + \indicator{\cV})}(\varphi)
		.
	\end{equation*}
\end{proposition}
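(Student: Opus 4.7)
The plan is to set $c_0 \coloneqq \genconj{(f + \indicator{\cV})}(\varphi) = \sup_{x \in \cV}(\varphi(x) - f(x))$ and then invoke the $C^k$-Sandwich Lemma (\cref{lemma:Ck-sandwich-lemma}) to produce a global smooth extension $\psi$ of $\varphi|_{\cA}$ that is controlled above by $f + c_0$ on all of $\cM$ and additionally coincides with $\varphi$ at a near-maximizing point $x_0 \in \cV$. By monotonicity, $c_0 \le \genconj{f}(\varphi) < +\infty$. I will focus on the nontrivial case $c_0 \in \R$; when $c_0 = -\infty$ one has either $\cV = \emptyset$ or $f \equiv +\infty$ on $\cV$, and the statement reduces to producing any smooth extension of $\varphi|_{\cA}$, which is again delivered by the Sandwich Lemma with $b \equiv +\infty$.

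Since $c_0 \in \R$, by the definition of a supremum there exists $x_0 \in \cV$ with $\varphi(x_0) - f(x_0) \ge c_0 - \varepsilon$; in particular $f(x_0) \in \R$. As $\cM$ is Hausdorff, $\set{x_0}$ is closed, so $\cA' \coloneqq \cA \cup \set{x_0} \subseteq \cV$ is closed in $\cM$. I then apply \cref{lemma:Ck-sandwich-lemma} with $\cO \coloneqq \cV$, $w \coloneqq \varphi \in C^k(\cV)$, lower bound $a \equiv -\infty$ (so that $-a \equiv +\infty \in \lscextF{\cM}$), upper bound $b \coloneqq f + c_0 \in \lscextF{\cM}$, and closed set $\cA'$. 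The pointwise bound $\varphi \le f + c_0$ on $\cV$ holds by the definition of $c_0$ together with the fact that $\varphi$ is real-valued on $\cV$, while the strict separation $a < b$ on $\cM \setminus \cV$ reduces to $-\infty < f + c_0$, which holds everywhere because $\lscextF{\cM}$ consists of functions with values in $\extR = \R \cup \set{+\infty}$.

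The lemma delivers $\psi \in C^k(\cM)$ satisfying $\psi \le f + c_0$ on $\cM$ and $\psi = \varphi$ on $\cA' \supseteq \cA$. The global inequality immediately yields the upper bound $\genconj{f}(\psi) \le c_0 = \genconj{(f + \indicator{\cV})}(\varphi)$, and evaluating the Fenchel-Young inequality \eqref{eq:FYI:1} at $x_0$ gives the lower bound $\genconj{f}(\psi) \ge \psi(x_0) - f(x_0) = \varphi(x_0) - f(x_0) \ge c_0 - \varepsilon$. The main technical obstacle is arranging the hypotheses of the Sandwich Lemma cleanly, especially the strict separation $a < b$ on the complement $\cM \setminus \cV$; this is exactly what the convention $\extR = \R \cup \set{+\infty}$ for the codomain of lower semi-continuous functions buys us, as it prevents the upper bound $b = f + c_0$ from collapsing to $-\infty$ at any point of $\cM$ and thereby permits the simplest possible choice $a \equiv -\infty$ for the lower envelope.
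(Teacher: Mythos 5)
Your proof is correct and rests on the same engine as the paper's, namely the $C^k$-sandwich machinery, but you configure it differently. The paper first normalizes $\genconj{(f+\indicator{\cV})}(\varphi)=0$ via \eqref{eq:FDconstant}, builds the auxiliary function $g=\min\set{f,\varphi+\indicator{\overline\cV}}$ (this is where the hypothesis $\varphi\in\lscextF{\overline\cV}$ is consumed, to make $g$ lower semi-continuous), and then applies \cref{corollary:Ck-sandwich-lemma} to $g$, obtaining $\psi\le g\le f$ with $\psi=g=\varphi$ on $\cA$ and $\psi(x_\varepsilon)>g(x_\varepsilon)-\varepsilon/2$. You instead invoke \cref{lemma:Ck-sandwich-lemma} directly with $\cO=\cV$, $w=\varphi$, $a\equiv-\infty$, $b=f+c_0$ and the enlarged closed set $\cA\cup\set{x_0}$, so that $\psi$ agrees with $\varphi$ \emph{exactly} at the near-maximizer rather than only up to $\varepsilon/2$. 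This is a clean simplification: it dispenses with the auxiliary function $g$ and, as a by-product, never uses the lower semi-continuity of $\varphi$ on $\overline\cV$. The hypotheses of the lemma do check out as you claim: $-a\equiv+\infty$ and $b=f+c_0$ lie in $\lscextF{\cM}$, $\varphi\le f+c_0$ on $\cV$ by definition of $c_0$, and $a<b$ off $\cV$ because $f$ never takes the value $-\infty$; the two Fenchel estimates then follow from $\psi\le f+c_0$ on $\cM$ and from \eqref{eq:FYI:1} at $x_0$.

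Two minor remarks. Your disposal of the degenerate case $c_0=-\infty$ is not quite right: the asserted conclusion would then force $\genconj f(\psi)=-\infty$, which for real-valued $\psi$ requires $f\equiv+\infty$ on all of $\cM$, not merely the existence of some smooth extension of $\restr{\varphi}{\cA}$. However, the paper's own proof tacitly assumes $c_0\in\R$ as well (the constant shift in its first line presupposes finiteness), so this is a defect of the statement in an edge case rather than of your argument. Also, the closedness of $\set{x_0}$ needs $\cM$ to be $T_1$; this is a standing convention for Banach manifolds and is used implicitly in the paper's proof of \cref{corollary:Ck-sandwich-lemma} (where $\indicator{\set{x_0}}$ must be lower semi-continuous), so it is harmless.
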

\begin{proof}
	Due to \eqref{eq:FDconstant}, we may shift $\varphi$ by a suitable constant and thus assume \wolog $\genconj{(f + \indicator{\cV})}(\varphi) = 0$.
	By \eqref{eq:elementary-properties:3}, this means $\varphi \le f$ on~$\cV$.
	By definition of the Fenchel conjugate as a supremum, there exists $x_\varepsilon \in \cV$ such that $\varphi(x_\varepsilon) > f(x_\varepsilon) - \varepsilon/2$.

	The function $g \coloneqq \min \set{f, \varphi + \indicator{\overline \cV}}$ belongs to $\lscextF{\cM}$ as a minimum of lower semi-continuous functions.
	Moreover, we have $g = \varphi$ on $\cV$, so $g \in \lscextF{\cM} \cap C^k(\cV)$.

	By \cref{corollary:Ck-sandwich-lemma} we obtain $\psi \le g$ on~$\cM$ with $\psi = g = \varphi$ on $\cA$ and $\psi(x_\varepsilon) > g(x_\varepsilon)-\varepsilon/2 > f(x_\varepsilon) - \varepsilon$, hence $\genconj f(\psi) \ge -\varepsilon$.
	Since $\psi \le f$ on~$\cM$, we conclude $\genconj f(\psi) \le 0$.
\end{proof}

\subsection{Conjugates and Subdifferentials}%
\label{subsection:nonlinear-Fenchel-conjugates:manifolds:conjugates-and-subdifferentials}

In convex analysis on a locally convex topological space~$V$, the Fenchel conjugate is closely related to the convex subdifferential.
The latter can be defined, \eg, for $f \in \lscextF{V}$, as follows:
\begin{equation}
	\label{eq:convex-subdifferential}
	\partial f(x)
	\coloneqq
	\setDef[big]{x^* \in \dualSpace{V}}{f-x^* \text{ attains its (global) minimum at } x}
	\subseteq
	\dualSpace{V}
	.
\end{equation}
By convention, $\partial f(x) \coloneqq \emptyset$ when $f(x) = +\infty$.
It is a well-known fact of convex analysis that $x^* \in \partial f(x)$ holds if and only if the Fenchel-Young equality~\eqref{eq:FYE:1:1} holds for $(x,x^*,f)$, \ie, $\conjugate{f}(x^*) = x^*(x) - f(x)$.
In other words, the supremum in the definition of the classical Fenchel conjugate is attained at~$x$.

The convex subdifferential has been generalized in various ways for functions more general than convex.
One notion that is particularly suited for a generalization to manifolds, is that of the (viscosity) Fréchet subdifferential.
It is obtained from \eqref{eq:convex-subdifferential} by replacing the linear function~$x^*$ by a $C^1$-function~$\varphi$, and by replacing the global minimality by local minimality; see for instance \cite[Definition~3.1.2]{BorweinZhu:2005:1}:

\begin{definition}
	\label{definition:viscosity-Frechet-subdifferential}
	Suppose that $\cM$ is a $C^1$-Banach manifold.
	Moverover, suppose that $f \in \lscextF{\cM}$, $x \in \cM$ and $f(x) \neq +\infty$.
	The (viscosity) Fréchet subdifferential $\partial_F f(x)$ of~$f$ is defined as follows:
	\begin{equation*}
		\partial_F f(x)
		\coloneqq
		\setDef[big]{\varphi'(x)}{\varphi \in C^1(\cM), \; f-\varphi \text{ attains a local minimum at } x}
		\subseteq
		\cotangentSpace{x}[\cM]
		,
	\end{equation*}
	where $\cotangentSpace{x}[\cM] \coloneqq \dualSpace{(\tangentSpace{x}[\cM])}$ denotes the cotangent space at~$x$.
	In case $f(x) = +\infty$, we set $\partial_F f(x) \coloneqq \emptyset$.
\end{definition}

If $\cM$ is a linear space and $f \in \lscextF{\cM}$, then obviously $\partial f(x) \subseteq \partial_F f(x)$ holds.
Moreover, for $g \in C^1(\cM)$, we have
\begin{equation*}
	\partial_F(f + g)(x)
	 =
	\partial_F f(x) + \set{g'(x)}
	.
\end{equation*}
This is due to $f - \varphi = f + g - (\varphi + g)$ and $(\varphi + g)'(x) = \varphi'(x) + g'(x)$.
Furthermore, for $A \in C^1(\cM,\cN)$, $g \in \realF(\cN)$ and $x^* \in (\partial_F g)(A(x))$, we have $\adj{A'(x)} x^* \in \partial_F (g \circ A)(x)$, because $\varphi \circ A$ is a minorant of $g \circ A$ whenever $\varphi$ is a minorant of $g$.
That is,
\begin{equation*}
	\adj{A'(x)}\partial_F g(A(x))
	\subseteq
	\partial_F(g \circ A)(x)
	.
\end{equation*}

With \cref{definition:viscosity-Frechet-subdifferential} we can generalize the Fenchel-Young equality \eqref{eq:FYE:1:1}, which relates the classical Fenchel conjugate $\conjugate{f}$ and the subdifferential $\partial f$, to the nonlinear Fenchel conjugate $\genconj f$ and the viscosity Fréchet subdifferential $\partial_F f$ on manifolds:
\begin{proposition}
	\label{proposition:Fenchel-Young-equality-and-viscosity-Frechet-subdifferential}
	Suppose that $\cM$ is a $C^1$-Banach manifold.
	Moreover, consider $x \in \cM$, $f \in \lscextF{\cM}$ and $\varphi \in C^1(\cM)$.
	\begin{enumerate}
		\item \label[statement]{item:Fenchel-Young-equality-and-viscosity-Frechet-subdifferential:1}
			If $\genconj{f}(\varphi) = \varphi(x) - f(x)$, then $\varphi'(x) \in \partial_F f(x)$ and $\delta_x \in \partial (\restr{\genconj{f}}{C^1(\cM)})(\varphi)$.

		\item \label[statement]{item:Fenchel-Young-equality-and-viscosity-Frechet-subdifferential:2}
			Conversely, if $\delta_x \in \partial (\restr{\genconj{f}}{C^1(\cM)})(\varphi)$, then $\genconj{f}(\varphi) = \varphi(x) - f(x)$.
	\end{enumerate}
\end{proposition}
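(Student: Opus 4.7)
The two items are proved by unpacking the subdifferential definitions against \cref{definition:general-nonlinear-Fenchel-conjugate} and the Fenchel--Young inequality \eqref{eq:FYI}.

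For part~\ref{item:Fenchel-Young-equality-and-viscosity-Frechet-subdifferential:1}, the hypothesis $\genconj{f}(\varphi) = \varphi(x) - f(x)$ (with finite right-hand side in the nontrivial case $f(x) \in \R$) says that the supremum defining $\genconj{f}(\varphi)$ is attained at~$x$, so $f - \varphi$ attains its global, hence local, minimum at~$x$. By \cref{definition:viscosity-Frechet-subdifferential}, this yields $\varphi'(x) \in \partial_F f(x)$ at once. For the second conclusion I would combine the Fenchel--Young inequality \eqref{eq:FYI:2}, applied to an arbitrary $\psi \in C^1(\cM)$, with the assumed equality to obtain
\begin{equation*}
	\genconj{f}(\psi) - \genconj{f}(\varphi)
	\;\ge\;
	\psi(x) - \varphi(x)
	\;=\;
	\delta_x(\psi - \varphi).
\end{equation*}
Since $\restr{\genconj{f}}{C^1(\cM)}$ is convex by \cref{proposition:monotonicity-Lipschitz-continuity-convexity}~\ref{item:monotonicity-Lipschitz-continuity-convexity:3}, this is exactly the subgradient inequality defining $\delta_x \in \partial(\restr{\genconj{f}}{C^1(\cM)})(\varphi)$.

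Part~\ref{item:Fenchel-Young-equality-and-viscosity-Frechet-subdifferential:2} is the content-carrying direction. The subgradient hypothesis unpacks to
\begin{equation*}
	\psi(x) - \genconj{f}(\psi)
	\;\le\;
	\varphi(x) - \genconj{f}(\varphi)
	\quad \text{for all } \psi \in C^1(\cM),
\end{equation*}
while the Fenchel--Young inequality \eqref{eq:FYI:2} applied to $\varphi$ delivers $\varphi(x) - \genconj{f}(\varphi) \le f(x)$ for free. For the reverse inequality, I would exhibit, for each $\varepsilon > 0$, a smooth minorant $\psi_\varepsilon \in C^1(\cM)$ with $\psi_\varepsilon \le f$ on~$\cM$ and $\psi_\varepsilon(x) > f(x) - \varepsilon$; such a $\psi_\varepsilon$ is furnished by \cref{corollary:Ck-sandwich-lemma} with $\cO = \emptyset$ and $x_0 = x$, under the implicit running assumption of this section that $\cM$ admits $C^1$-partitions of unity. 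The bound $\psi_\varepsilon \le f$ translates via \eqref{eq:elementary-properties:3} into $\genconj{f}(\psi_\varepsilon) \le 0$, whence $\psi_\varepsilon(x) - \genconj{f}(\psi_\varepsilon) \ge f(x) - \varepsilon$. Feeding $\psi_\varepsilon$ into the subgradient inequality and sending $\varepsilon \downarrow 0$ yields $f(x) \le \varphi(x) - \genconj{f}(\varphi)$, which combined with Fenchel--Young gives the claimed equality $\genconj{f}(\varphi) = \varphi(x) - f(x)$.

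The main obstacle is the construction in part~\ref{item:Fenchel-Young-equality-and-viscosity-Frechet-subdifferential:2}: one needs $C^1$-minorants of~$f$ that are arbitrarily tight at a prescribed point, which is precisely the content of \cref{corollary:Ck-sandwich-lemma} and rests on the existence of smooth partitions of unity. The degenerate case $f(x) = +\infty$ is handled separately by the convention $\partial_F f(x) = \emptyset$, together with the observation that $\delta_x \in \partial(\restr{\genconj{f}}{C^1(\cM)})(\varphi)$ combined with the elementary properties in \cref{proposition:elementary-properties} either forces $f \equiv +\infty$ (making both sides $-\infty$) or contradicts the subgradient inequality.
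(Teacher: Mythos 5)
Part~1 of your argument coincides with the paper's: global attainment at~$x$ of the supremum defining $\genconj{f}(\varphi)$ gives $\varphi'(x) \in \partial_F f(x)$ directly, and the subgradient inequality you derive from \eqref{eq:FYI:1} is just the standard reformulation of the paper's statement that $\restr{\genconj{f}}{C^1(\cM)} - \delta_x$ attains its global minimum at~$\varphi$.

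For part~2 you take a genuinely different route, and the difference matters. The paper's proof is a one-liner: it reads the hypothesis as global minimality of $\restr{\genconj{f}}{C^1(\cM)} - \delta_x$ at~$\varphi$ and asserts the Fenchel--Young equality, implicitly via the equivalence \eqref{eq:FYE:2}; no smooth minorants and no partitions of unity appear. You instead prove the nontrivial inequality $f(x) \le \varphi(x) - \genconj{f}(\varphi)$ by feeding $\varepsilon$-tight $C^1$ minorants of~$f$ at~$x$, obtained from \cref{corollary:Ck-sandwich-lemma}, into the subgradient inequality. That argument is sound, but it is not a proof of the proposition as stated: it requires $\cM$ to admit $C^1$-partitions of unity, and, contrary to your description of this as an implicit running assumption, the paper states that hypothesis explicitly in every result of this section that needs it (\cref{proposition:sup-closure-of-Ck-functions}, \cref{proposition:extension-of-local-Fenchel-conjugates}, \cref{proposition:converse-of-Fenchel-Young-equality}~\ref{item:converse-of-Fenchel-Young-equality:2}) and omits it here, so you are proving a version of the statement with an added hypothesis. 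That said, your hypothesis is doing real work: what part~2 needs is precisely that $\sup \setDef{\psi(x) - \genconj{f}(\psi)}{\psi \in C^1(\cM)}$ reaches $f(x)$, equivalently $\regularization[C^1(\cM)]{f}(x) = f(x)$ (see \cref{theorem:biconjugate}), whereas the hypothesis of part~2 only controls that supremum over $C^1(\cM)$ while \eqref{eq:FYE:2:2:dual-minimization} concerns the supremum over all of $\ExtF{\cM}$. Your proof therefore makes explicit a step that the paper's proof elides; the price is the added assumption, which you should either state openly or show how to remove.
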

\begin{proof}
	\Cref{item:Fenchel-Young-equality-and-viscosity-Frechet-subdifferential:1} is a direct consequence of the definitions: \eqref{eq:FYE:1:1}, \ie, $\genconj{f}(\varphi) = \varphi(x) - f(x)$, means that $f - \varphi$ attains a (global) minimum at $x$, so $\varphi'(x)\in \partial_F f(x)$ by definition.
	On the other hand, $\genconj{f}(\varphi) = \varphi(x) - f(x) = \delta_x(\varphi) - \delta_x(f)$ also means that the restriction of $\genconj{f} - \delta_x$ to $C^1(\cM)$ attains its global minimum at~$\varphi$, so $\delta_x \in \partial (\restr{\genconj{f}}{C^1(\cM)})(\varphi)$ by definition.

	Notice that $\genconj{f}$ is a convex function on the linear space $C^1(\cM)$ by \cref{proposition:monotonicity-Lipschitz-continuity-convexity}~\ref{item:monotonicity-Lipschitz-continuity-convexity:3}, and thus the use of the convex subdifferential $\partial (\restr{\genconj{f}}{C^1(\cM)})$ is appropriate.

	For \cref{item:Fenchel-Young-equality-and-viscosity-Frechet-subdifferential:2}, suppose that $\delta_x \in \partial (\restr{\genconj{f}}{C^1(\cM)})(\varphi)$ holds.
	Then the restriction of $\genconj{f} - \delta_x$ to $C^1(\cM)$ attains its minimum at~$\varphi$, which implies $\genconj{f}(\varphi) = \varphi(x) - f(x)$.
\end{proof}

We point out that $\varphi'(x) \in \partial_F f(x)$ does \emph{not} in general imply the Fenchel-Young equality $\genconj{f}(\varphi) = \varphi(x) - f(x)$.
This is because $\varphi'(x) \in \partial_F f(x)$ only entails that $x$ is a \emph{local} minimizer of $f - \psi$, for some $\psi \in \cF$ with $\psi'(x) = \varphi'(x)$.
By contrast, the Fenchel-Young equality requires $x$ to be a \emph{global} minimizer of $f - \varphi$.

However, as shown in the following result, given $x^* \in \partial_F f(x)$, it is often possible to find $\varphi \in C^1(\cM)$ such that $x^* = \varphi'(x)$ holds and the Fenchel-Young equality is fulfilled:
\begin{proposition}
	\label{proposition:converse-of-Fenchel-Young-equality}
	Suppose that $\cM$ is a $C^1$-Banach manifold.
	\begin{enumerate}
		\item \label[statement]{item:converse-of-Fenchel-Young-equality:1}
			Let $x \in \cM$ and $f \in \lscextF{\cM}$.
			Suppose that $x^*\in \partial_F f(x)$.
			Then there exist $\varphi \in C^1(\cM)$ and an open neighborhood $\cV$ of $x$ such that $\varphi'(x) = x^*$ and the Fenchel-Young equality~\eqref{eq:FYE:1:1} holds for $(x,\restr{\varphi}{\cV}, \restr{f}{\cV})$, \ie,
			\begin{equation*}
				\genconj{(f + \indicator{\cV})}(\varphi)
				=
				\varphi(x)
				-
				f(x)
				.
			\end{equation*}

		\item \label[statement]{item:converse-of-Fenchel-Young-equality:2}
			Assume, in addition, that $\cM$ admits $C^1$-partitions of unity.
			Then there exists $\psi \in C^1(\cM)$ such that $\psi'(x) = x^*$ and the Fenchel-Young equality~\eqref{eq:FYE:1:1} holds for $(x,\psi,f)$, \ie,
			\begin{equation*}
				\genconj{f}(\psi)
				=
				\psi(x)
				-
				f(x)
				.
			\end{equation*}
	\end{enumerate}
\end{proposition}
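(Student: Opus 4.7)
My plan is as follows. For \textbf{Part 1}, I would simply unwind the definition of the viscosity Fréchet subdifferential (\cref{definition:viscosity-Frechet-subdifferential}). That definition gives $\varphi \in C^1(\cM)$ with $\varphi'(x) = x^*$ such that $f - \varphi$ attains a local minimum at $x$; equivalently, there is an open neighborhood $\cV$ of $x$ with $\varphi(y) - f(y) \le \varphi(x) - f(x)$ for all $y \in \cV$. (Note that $\partial_F f(x) \neq \emptyset$ forces $f(x) \in \R$, since $f \in \lscextF{\cM}$ excludes $-\infty$.) The local Fenchel conjugate formula \eqref{eq:local-Fenchel-conjugate:1} then reads off the desired equality
\begin{equation*}
\genconj{(f + \indicator{\cV})}(\varphi) = \sup_{y \in \cV} \paren[big]\{\}{\varphi(y) - f(y)} = \varphi(x) - f(x).
\end{equation*}

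For \textbf{Part 2}, the task is to upgrade this local statement to a global one: I need a $C^1$-minorant $\psi$ of $f$ on all of $\cM$ that still agrees with the $\varphi$ from Part 1 on some neighborhood of $x$, so that both the derivative condition $\psi'(x) = x^*$ and attainment of the supremum at $x$ are preserved. I would first normalize $\varphi$ by passing to $\tilde\varphi \coloneqq \varphi - (\varphi(x) - f(x)) \in C^1(\cM)$, which satisfies $\tilde\varphi'(x) = x^*$, $\tilde\varphi(x) = f(x)$, and $\tilde\varphi \le f$ on $\cV$.

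I would then invoke the sandwich lemma (\cref{lemma:Ck-sandwich-lemma}) with $\cO \coloneqq \cV$, $w \coloneqq \tilde\varphi$, upper bound $b \coloneqq f \in \lscextF{\cM}$, and the degenerate lower bound $a \equiv -\infty$ (admissible since $-a \equiv +\infty$ is trivially lower semi-continuous, and $a < b$ on $\cM \setminus \cO$ is automatic because $f$ never takes the value $-\infty$). Choosing an open neighborhood $\cV_1$ of $x$ with $\overline{\cV_1} \subseteq \cV$, I set $\cA \coloneqq \overline{\cV_1}$ and obtain $\psi \in C^1(\cM)$ with $\psi \le f$ on all of $\cM$ and $\psi = \tilde\varphi$ on $\cA$. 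Since $\cA$ contains an open neighborhood of $x$, this forces $\psi(x) = f(x)$ and $\psi'(x) = x^*$; combined with $\psi \le f$ one reads off $\genconj{f}(\psi) \le 0$, while evaluation at $x$ gives $\genconj{f}(\psi) \ge 0$. Hence $\genconj{f}(\psi) = 0 = \psi(x) - f(x)$, as required.

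The only subtlety, and thus the main obstacle, is the sandwich-lemma setup: one must choose a closed \emph{neighborhood} $\cA$ of $x$ (rather than merely the singleton $\set{x}$) so that the identity $\psi = \tilde\varphi$ on $\cA$ genuinely transmits the derivative at $x$, and one should exploit that $a \equiv -\infty$ is admissible to avoid fabricating a concrete finite lower bound. Everything else is formal manipulation using properties of $\genconj{\cdot}$ already established in the paper.
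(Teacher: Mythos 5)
Your proof is correct. Part~1 is exactly the paper's argument: unwind \cref{definition:viscosity-Frechet-subdifferential} and shrink to a neighborhood $\cV$ on which the local minimum of $f - \varphi$ is global. For Part~2 the paper instead routes the globalization through \cref{proposition:extension-of-local-Fenchel-conjugates}, applied to $\varphi$ and a closed neighborhood $\cA \subseteq \cV$ of~$x$, and then closes the loop via the chain $\genconj{f}(\psi) \le \genconj{(f+\indicator{\cV})}(\varphi) = \psi(x) - f(x) \le \genconj{f}(\psi)$. You bypass that proposition and invoke \cref{lemma:Ck-sandwich-lemma} directly, after normalizing $\varphi$ into a genuine $C^1$-minorant of $f$ on $\cV$ that touches $f$ at~$x$. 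Since \cref{proposition:extension-of-local-Fenchel-conjugates} is itself proved through \cref{corollary:Ck-sandwich-lemma}, the two arguments rest on the same sandwich machinery and differ only in packaging; your route is slightly more self-contained, avoids the $\varepsilon$-bookkeeping and the finiteness hypothesis on the conjugate carried by the intermediate proposition, and delivers $\genconj{f}(\psi) = 0 = \psi(x) - f(x)$ in one step from $\psi \le f$ and $\psi(x) = f(x)$. Your two points of care are both exactly right: $\cA$ must be a closed \emph{neighborhood} of~$x$ rather than $\set{x}$ so that $\psi = \tilde\varphi$ transmits the derivative, and the degenerate envelope $a \equiv -\infty$ is admissible in the sandwich lemma --- indeed the paper's own \cref{corollary:Ck-sandwich-lemma} already uses an $a$ equal to $-\infty$ away from a single point.
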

\begin{proof}
	\Cref{item:converse-of-Fenchel-Young-equality:1}:
	By definition of $\partial_F$ there exists $\varphi \in C^1(\cM)$ with $x^* = \varphi'(x)$ such that $f - \varphi$ attains a local minimum at~$x$.
	Let $\cV \subseteq \cM$ be a neighborhood such that this minimum is global on~$\cV$.
	Then clearly~\eqref{eq:FYE:1:1} holds, restricted to~$\cV$.

	\Cref{item:converse-of-Fenchel-Young-equality:2}:
	We may apply \cref{proposition:extension-of-local-Fenchel-conjugates} to $\varphi$ with some closed neighborhood $\cA \subseteq \cV$ of~$x$ to obtain a function~$\psi$ with the property $\psi = \varphi$ on~$\cA$, implying $\psi'(x) = \varphi'(x)$.
	Furthermore,
	\begin{equation*}
		\genconj{f}(\psi)
		\le
		\genconj{(f + \indicator{\cV})}(\varphi)
		=
		\varphi(x) - f(x)
		=
		\psi(x) - f(x)
		\le
		\genconj{f}(\psi)
		.
	\end{equation*}
	The first inequality is due to \cref{proposition:extension-of-local-Fenchel-conjugates} and the second inequality follows from the definition of the Fenchel conjugate.
\end{proof}

\subsection{Restriction to Test Functions Generated by the Exponential Map}%
\label{subsection:nonlinear-Fenchel-conjugates:manifolds:comparison}

In this subsection, we further restrict the choice of (smooth) test functions compared to \cref{subsection:nonlinear-Fenchel-conjugates:manifolds:smooth-functions}.
In contrast to the infinite-dimensional linear space $\cF = C^k(\cM)$ of test functions, we now consider a linear space~$\cF$ of the same dimension as~$\cM$.

To this end, suppose that $\cM$ is a finite-dimensional \emph{Riemannian} $C^\infty$-manifold.
Consider a point $x \in \cM$ and a neighborhood~$V$ of the origin in the tangent space $\tangentSpace{x}$ such that the exponential map~$\exponential{x}$ is a diffeomorphism $V \to \cV \coloneqq \exponential{x}(V) \subseteq \cM$.

We define the restricted function space
\begin{equation*}
	\cF_x
	\coloneqq
	\setDef[big]{x^*\circ \exponential{x}^{-1} \in C^\infty(\cV,\R)}{x^* \in \cotangentSpace{x}}
\end{equation*}
obtained by composition of linear functions on $\tangentSpace{x}$ with the inverse exponential map.
$\cF_x$ is a linear space of dimension~$\dim(\cM)$.
Since elements of $\cF_x$ are only defined locally, we proceed as in \eqref{eq:local-Fenchel-conjugate:1} and consider, for $f \in \ExtF{\cM}$,
\begin{equation}
	\label{eq:local-Fenchel-conjugate:2}
	\genconj{(f + \indicator{\cV})}(\varphi)
	=
	\sup_{y \in \cV} \paren[big]\{\}{\varphi(y) - f(y)}
	\quad
	\text{for }
	\varphi
	\in
	\cF_x
	.
\end{equation}

We now show that \eqref{eq:local-Fenchel-conjugate:2} agrees with the classical Fenchel conjugate of $f \circ \exponential{x} + \indicator{V}$, evaluated at~$x^*$.
To this end, consider $\varphi \in \cF_x$ such that $\varphi = x^*\circ \exponential{x}^{-1}$.
We evaluate
\begin{subequations}
	\begin{align}
		\conjugate{(f \circ \exponential{x} + \indicator{V})}(x^*)
		&
		=
		\sup \setDef[big]{x^*(\xi) - f(\exponential{x}(\xi)) - \indicator{V}(\xi)}{\xi \in \tangentSpace{x}}
		\notag
		\\
		&
		=
		\sup \setDef[big]{x^*(\xi) - f(\exponential{x}(\xi))}{\xi \in V}
		\label{eq:local-Fenchel-conjugate:3}
		\\
		&
		=
		\sup \setDef[big]{x^*(\exponential{x}^{-1}(y)) - f(y)}{y \in \cV}
		\label{eq:local-Fenchel-conjugate:4}
		\\
		&
		=
		\sup \setDef[big]{\varphi(y) - f(y)}{y \in \cV}
		\notag
		\\
		&
		=
		\genconj{(f + \indicator{\cV})}(\varphi)
		.
		\notag
	\end{align}
\end{subequations}
We are now in the position to compare $\genconj{(f + \indicator{\cV})}(\varphi)$ with the concept of $x$-Fenchel conjugate from \cite[Definition~3.1]{BergmannHerzogSilvaLouzeiroTenbrinckVidalNunez:2021:1}.
As can be seen by comparison with \eqref{eq:local-Fenchel-conjugate:3}, that definition coincides with $\conjugate{(f \circ \exponential{x} + \indicator{V})}$ but was only defined for strongly geodesically convex sets~$\cV$.
In case $\cM$ is a Hadamard manifold, one can choose $\cV = \cM$ and we obtain $V = \tangentSpace{x}$.
Then $\genconj{(f + \indicator{\cV})}(\varphi)$ also agrees with the $x$-dual function from \cite[Definition~3.1]{AhmadiKakavandiAmini:2010:1}.
As a variant, \cite[Definition~3.1]{SilvaLouzeiroBergmannHerzog:2022:1} use \eqref{eq:local-Fenchel-conjugate:4} on Hadamard manifolds with $\cV = \cM$ and $V = \tangentSpace{x}$, amounting to a definition of a Fenchel conjugate on the entire cotangent bundle.

\section{Nonlinear Fenchel Conjugates on Groups}%
\label{section:nonlinear-Fenchel-conjugates:groups}

This section is concerned with the nonlinear Fenchel conjugate on groups.
This is a natural setting to consider convolutions of extended real-valued functions.
Restricting the test functions to group homomorphisms into the additive reals, we show that the infimal convolution formula holds, \ie, the conjugate of an infimal convolution is the sum of the conjugates (\cref{subsection:nonlinear-Fenchel-conjugates:nonlinear-infimal-convolution}).
Moreover, replacing group homomorphisms by continuous affine functions, we obtain a notion of convex functions on groups through their $\sup$-closure (\cref{subsection:nonlinear-Fenchel-conjugates:groups:convexity}).

\subsection{Nonlinear Infimal Convolution}%
\label{subsection:nonlinear-Fenchel-conjugates:nonlinear-infimal-convolution}

The infimal convolution $f \infconvolution g$ for functions $f \in \extF{V}$ on a linear space~$V$ is defined as
\begin{equation}
	\label{eq:infConvolution}
	(f \infconvolution g)(x)
	\coloneqq
	\inf_{y \in V} f(x-y) + g(y)
	=
	\inf_{z \in V} f(z) + g(x-z)
	.
\end{equation}
The infimal convolution formula (see, \eg, \cite[Proposition~13.21]{BauschkeCombettes:2011:1}) shows that
\begin{equation}
	\label{eq:FenchelInfPlus}
	\conjugate{(f \infconvolution g)}
	=
	\conjugate{f} + \conjugate{g}
\end{equation}
holds for $f, g \in \extF{V}$.

We now generalize the infimal convolution \eqref{eq:infConvolution} to functions $f, g \in \extF{\cM}$ on a group $(\cM,\cdot)$, compare \cite{Bachir:2015:1}, by setting
\begin{equation}
	\label{eq:infConvolution-group}
	(f \infconvolution g)(x)
	\coloneqq
	\inf_{y \in \cM} f(x \cdot y^{-1}) + g(y)
	=
	\inf_{z \in \cM} f(z) + g(z^{-1} \cdot x)
	.
\end{equation}
We denote by
\begin{equation*}
	\cH
	\coloneqq
	\Homo((\cM,\cdot),(\R,+))
\end{equation*}
the linear space of group homomorphisms $(\cM,\cdot) \to (\R,+)$.
We are now in the position to generalize the infimal convolution formula \eqref{eq:FenchelInfPlus} to the following result.
\begin{proposition}
	\label{proposition:FenchelInfPlus}
	Suppose that $(\cM,\cdot)$ is a group and $f, g \in \extF{\cM}$.
	Then the conjugate of the infimal convolution $f \infconvolution g$ equals the sum of the conjugates when restricted to~$\cH = \Homo((\cM,\cdot),(\R,+))$, \ie,
	\begin{equation}
		\label{eq:FenchelInfPlus-groups}
		\genconj{(f \infconvolution g)}(\varphi)
		=
		\genconj{f}(\varphi)
		+
		\genconj{g}(\varphi)
		\quad
		\text{for all }
		\varphi
		\in
		\cH
		.
	\end{equation}
\end{proposition}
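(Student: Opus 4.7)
The plan is to unfold both sides using the definitions of the nonlinear Fenchel conjugate and of infimal convolution, and then exploit the homomorphism property $\varphi(x \cdot y^{-1} \cdot y) = \varphi(x \cdot y^{-1}) + \varphi(y)$ to separate the variables. Starting from the alternative formulation \eqref{eq:general-nonlinear-Fenchel-conjugate:alternative:1},
\begin{equation*}
	\genconj{(f \infconvolution g)}(\varphi)
	=
	\sup_{x \in \cM} \paren[big]\{\}{\varphi(x) - (f \infconvolution g)(x)}
	,
\end{equation*}
I substitute the definition \eqref{eq:infConvolution-group} and convert the $-\inf$ into a $\sup$ to obtain
\begin{equation*}
	\genconj{(f \infconvolution g)}(\varphi)
	=
	\sup_{x, y \in \cM} \paren[big]\{\}{\varphi(x) - f(x \cdot y^{-1}) - g(y)}
	.
\end{equation*}

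Since $\varphi \in \cH$ is a group homomorphism, the integrand rewrites as $[\varphi(x \cdot y^{-1}) - f(x \cdot y^{-1})] + [\varphi(y) - g(y)]$. For each fixed $y$, the map $x \mapsto z \coloneqq x \cdot y^{-1}$ is a bijection of $\cM$, so the change of variables turns the joint supremum over $(x, y)$ into a joint supremum over $(z, y)$ of two summands depending on independent variables. Splitting this joint supremum into a sum of separate suprema then yields $\genconj{f}(\varphi) + \genconj{g}(\varphi)$.

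The main obstacle is the careful handling of arithmetic in $\ExtR$. Because $\varphi$ is real-valued on $\cM$ while $f, g \in \extF{\cM}$ take values in $\R \cup \set{+\infty}$, each of the differences $\varphi(x \cdot y^{-1}) - f(x \cdot y^{-1})$ and $\varphi(y) - g(y)$ lies in $\R \cup \set{-\infty}$, so their sum is always defined in $\R \cup \set{-\infty}$, and the chain of equalities above is legitimate without restricting the domain of the supremum. The one edge case to check separately is when $f \equiv +\infty$ or $g \equiv +\infty$, which by \cref{proposition:elementary-properties}~\ref{item:elementary-properties:1} is exactly when the corresponding conjugate equals $-\infty$. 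In this case $f \infconvolution g \equiv +\infty$ and the left-hand side is $-\infty$, which agrees with the right-hand side as soon as the latter sum is defined. Up to this edge case, the formula follows directly from the computation outlined above.
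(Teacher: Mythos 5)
Your proof is correct and is essentially the paper's own argument run in the opposite direction: the paper starts from $\genconj{f}(\varphi) + \genconj{g}(\varphi)$, merges the two suprema into a joint one, applies the homomorphism property and the substitution $x = z \cdot y$, whereas you unfold the left-hand side, pass from $-\inf$ to $\sup$, and split --- the key ingredients (homomorphism property, group change of variables, separation of the joint supremum) are identical. Your explicit attention to the extended-real arithmetic, including the edge case where one conjugate is $-\infty$ and the right-hand sum might be undefined, is actually more careful than the paper, which passes over this silently.
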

\begin{proof}
	For any $\varphi \in \cH$, we find
	\begin{align*}
		\genconj{f}(\varphi) + \genconj{g}(\varphi)
		&
		=
		\sup_{z \in \cM} \paren[big]\{\}{\varphi(z) - f(z)}
		+
		\sup_{y \in \cM} \paren[big]\{\}{\varphi(y) - g(y)}
		\\
		&
		=
		\sup \setDef[big]{\varphi(z) + \varphi(y) - [f(z) + g(y)]}{y, z \in \cM}
		\\
		&
		=
		\sup \setDef[big]{\varphi(z \cdot y) - [f(z) + g(y)]}{y, z \in \cM}
		\quad
		\text{due to $\varphi \in \cH$}
		\\
		&
		=
		\sup \setDef[big]{\varphi(x) - [f(x \cdot y^{-1}) + g(y)]}{x, y \in \cM}
		\\
		&
		=
		\sup \setDef[big]{\varphi(x) - (f \infconvolution g)(x)}{x \in \cM}
		\\
		&
		=
		\genconj{(f \infconvolution g)}(\varphi)
		.
		\qedhere
	\end{align*}
\end{proof}

\subsection{A Notion of Convexity on Groups}%
\label{subsection:nonlinear-Fenchel-conjugates:groups:convexity}

In this subsection we introduce a notion of (mid-point) convexity for extended real-valued functions on the group~$(\cM,\cdot)$.
For $\cH = \Homo((\cM,\cdot),(\R,+))$, we begin by studying $\supcl{\cH}$, the set of pointwise suprema of subsets of~$\affine{\cH}$.
We will see that this set plays the role of the class of \lsc convex functions in our nonlinear setting.

To this end, consider the embedding \eqref{eq:canemb}
\begin{equation*}
	\bidualembedding{\cM}{\algebraicdualSpace{\cH}}
	\colon
	\cM
	\ni
	x
	\mapsto
	\delta_x
	\in
	\algebraicdualSpace{\cH}
\end{equation*}
that maps $x \in \cM$ to the evaluation function $\delta_x \in \algebraicdualSpace{\cH}$ so that we have $\bidualembedding{\cM}{\algebraicdualSpace{\cH}}(x)(\varphi) = \delta_x(\varphi) = \varphi(x)$ for $\varphi \in \cH$.

\begin{lemma}
	\label{lem:JHhom}
	Suppose that $(\cM,\cdot)$ is a group and $\cH = \Homo((\cM,\cdot),(\R,+))$.
	Then the embedding $\bidualembedding{\cM}{\algebraicdualSpace{\cH}} \colon (\cM,\cdot) \to (\algebraicdualSpace{\cH},+)$ is a homomorphism.
\end{lemma}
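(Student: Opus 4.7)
The plan is a direct verification that unravels the definitions. To show $\bidualembedding{\cM}{\algebraicdualSpace{\cH}}$ is a group homomorphism, I need to establish that
\begin{equation*}
	\bidualembedding{\cM}{\algebraicdualSpace{\cH}}(x \cdot y)
	=
	\bidualembedding{\cM}{\algebraicdualSpace{\cH}}(x) + \bidualembedding{\cM}{\algebraicdualSpace{\cH}}(y)
\end{equation*}
holds in $\algebraicdualSpace{\cH}$ for all $x, y \in \cM$. Since elements of $\algebraicdualSpace{\cH}$ are (linear) functions on $\cH$, the natural strategy is to verify equality pointwise on~$\cH$.

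First, I would fix arbitrary $x, y \in \cM$ and arbitrary $\varphi \in \cH$, and evaluate the left-hand side:
\begin{equation*}
	\bidualembedding{\cM}{\algebraicdualSpace{\cH}}(x \cdot y)(\varphi)
	=
	\delta_{x \cdot y}(\varphi)
	=
	\varphi(x \cdot y)
	.
\end{equation*}
The decisive step is to use that $\varphi \in \cH = \Homo((\cM,\cdot),(\R,+))$, so $\varphi$ carries the group operation on $\cM$ to addition on $\R$, giving $\varphi(x \cdot y) = \varphi(x) + \varphi(y)$. Then I would rewrite this as $\delta_x(\varphi) + \delta_y(\varphi) = (\delta_x + \delta_y)(\varphi)$, where the final step uses that addition in $\algebraicdualSpace{\cH}$ is defined pointwise. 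Since $\varphi$ was arbitrary, this proves the required identity in $\algebraicdualSpace{\cH}$.

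There is essentially no obstacle; the lemma reduces to reinterpreting the defining property of $\cH$ (homomorphism into the additive reals) as the homomorphism property of the evaluation map. No group structure beyond what is already present, and no continuity, topology, or convexity assumptions, are required.
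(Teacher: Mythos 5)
Your proof is correct and is essentially the paper's own argument: both verify the identity pointwise on $\cH$ by computing $\delta_{x \cdot y}(\varphi) = \varphi(x \cdot y) = \varphi(x) + \varphi(y) = (\delta_x + \delta_y)(\varphi)$ using the defining homomorphism property of $\varphi$. The only difference is the direction in which the chain of equalities is written, which is immaterial.
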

\begin{proof}
	Suppose that $x, y \in \cM$ and $\varphi \in \cH$.
	Then we have
	\begin{equation*}
		\delta_x(\varphi) + \delta_y(\varphi)
		=
		\varphi(x) + \varphi(y)
		=
		\varphi(x \cdot y)
		=
		\delta_{x \cdot y}(\varphi)
	\end{equation*}
	and thus $\bidualembedding{\cM}{\algebraicdualSpace{\cH}}(x) + \bidualembedding{\cM}{\algebraicdualSpace{\cH}}(y) = \delta_x + \delta_y = \delta_{x \cdot y} = \bidualembedding{\cM}{\algebraicdualSpace{\cH}}(x \cdot y)$.
\end{proof}

We work under the assumption that the linear space $\cH = \Homo((\cM,\cdot),(\R,+))$ is \emph{finite-dimensional}.
\begin{remark}
	For comparison, suppose that $(\cM,\cdot)$ is the additive Abelian group $(V,+)$ of a finite-dimensional real linear space~$V$ endowed with the Euclidean topology.
	We modify $\cH$ to denote the linear space of all \emph{continuous} additive homomorphisms from $(\cM,\cdot)$ into $(\R,+)$.
	Cauchy proved that continuous additive functions are linear; see, for instance, \cite{Hamel:1905:1}.
	We thus obtain $\cH = \dualSpace{V}$.
	In this setting, the pointwise suprema of subsets of~$\affine{\cH}$ are precisely the convex and \lsc functions on $\dualSpace{\cH} = \bidualSpace{V}$, which is linearly and topologically isomorphic to~$V$; see \cref{theorem:characterization-of-sup-closure}.
\end{remark}

The following two lemmas provide sufficient conditions for $\cH$ to be finite-dimen\-sio\-nal.
\begin{lemma}
	\label{lemma:finitely-generated-group}
	Consider a group $(\cM,\cdot)$, its commutator $[\cM,\cM]$ and the linear space $\cH = \Homo((\cM,\cdot),(\R,+))$.
	Suppose that the $\cM$ or its Abelianization $\abelianization{\cM} \coloneqq \quotient{\cM}{[\cM,\cM]}$ are finitely generated.
	Then $\cH$ is finite-dimensional.
\end{lemma}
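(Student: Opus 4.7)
The plan is to reduce to the Abelianization and then apply the structure theorem for finitely generated Abelian groups.

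First I would observe that the target group $(\R,+)$ is Abelian, so every homomorphism $\varphi \in \cH$ vanishes on the commutator subgroup $[\cM,\cM]$. Hence each $\varphi$ factors uniquely through the quotient projection $\pi \colon \cM \to \abelianization{\cM}$, yielding a canonical linear isomorphism
\begin{equation*}
	\cH
	=
	\Homo((\cM,\cdot),(\R,+))
	\;\cong\;
	\Homo((\abelianization{\cM},\cdot),(\R,+)).
\end{equation*}
If $\cM$ itself is finitely generated, then so is its quotient $\abelianization{\cM}$; thus in either case of the hypothesis we may assume that $\abelianization{\cM}$ is a finitely generated Abelian group.

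Next I would invoke the structure theorem for finitely generated Abelian groups to write
\begin{equation*}
	\abelianization{\cM}
	\;\cong\;
	\Z^r \oplus T,
\end{equation*}
where $r \in \N_0$ and $T$ is a finite torsion group. Any homomorphism into $(\R,+)$ annihilates $T$, since $(\R,+)$ is torsion-free. Therefore
\begin{equation*}
	\Homo((\abelianization{\cM},\cdot),(\R,+))
	\;\cong\;
	\Homo(\Z^r,\R)
	\;\cong\;
	\R^r,
\end{equation*}
the last isomorphism being given by evaluation on the $r$ standard generators. Combining with the first step yields $\dim \cH = r < \infty$.

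There is no significant obstacle; the only subtlety is noting that the hypothesis comes in two flavors ($\cM$ or $\abelianization{\cM}$ finitely generated), both of which reduce to the same situation because a quotient of a finitely generated group is finitely generated. Once this reduction is in place, the structure theorem closes the argument immediately.
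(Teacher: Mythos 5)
Your proof is correct, and it shares the paper's first reduction step (homomorphisms into the Abelian group $(\R,+)$ kill the commutator subgroup, so $\cH \cong \Homo((\abelianization{\cM},\cdot),(\R,+))$, and a quotient of a finitely generated group is finitely generated), but it then diverges. The paper finishes elementarily: a homomorphism on a finitely generated group is determined by its values on a generating family of $n$ elements, so evaluation on the generators gives a linear monomorphism of $\cH$ into $\R^n$, whence $\dim \cH \le n$. You instead invoke the structure theorem for finitely generated Abelian groups, decompose $\abelianization{\cM}$ as a free Abelian group of rank~$r$ plus a finite torsion part, observe that torsion is annihilated by any homomorphism into the torsion-free group $(\R,+)$, and conclude $\cH \cong \R^r$. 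Your route is heavier machinery but buys more: it identifies $\dim \cH$ exactly as the torsion-free rank of $\abelianization{\cM}$, whereas the paper only obtains an upper bound by the number of generators (the ``potential relations among the generators'' it mentions are precisely what your torsion analysis makes explicit). For the purposes of the lemma, which only asserts finite-dimensionality, both arguments are complete.
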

\begin{proof}
	When $\cM$ is finitely generated, then so is $\abelianization{\cM}$, and thus we only consider the latter case.

	Since the codomain $(\R,+)$ is Abelian, the linear spaces $\cH = \Homo((\cM,\cdot),(\R,+))$ and $\widehat \cH \coloneqq \Homo((\abelianization{\cM},\cdot),(\R,+))$ are isomorphic as groups; see for instance \cite[Problem~167, p.60]{Rose:1994:1}.
	It is easy to verify that they are also isomorphic as linear spaces.

	Suppose that $(\abelianization{\cM},\cdot)$ is finitely generated and $F \coloneqq \family[big]{\equivalenceclass{x_1}, \ldots, \equivalenceclass{x_n}}$ is a generating family with $x_i \in \cM$ and cosets $\equivalenceclass{x_i}$ \wrt the commutator.
	Then any $\varphi \in \widehat \cH$ is uniquely determined by its values on~$F$.
	Consequently, there is a monomorphism $I \colon \widehat \cH \ni \varphi \mapsto \varphi(F) \in \R^n$.
	Due to potential relations present among the generators, the image of~$I$ is a subspace $U \subseteq \R^n$.
	In any case, $\cH$ and $\widehat \cH$ are both isomorphic to~$U$ and thus of finite dimension.
\end{proof}

\begin{lemma}
	\label{lemma:Lie-group}
	Consider a Lie group $(\cM,\cdot)$ with Lie algebra $(\fm,[\cdot,\cdot])$, and commutator
	\begin{equation*}
		[\fm,\fm]
		\coloneqq
		\setDef{[v,w]}{ v, w \in \fm}
		.
	\end{equation*}
	Suppose that $\quotient{\fm}{[\fm,\fm]}$ is finite dimensional and $\cM$ is connected.
	Then $\cH$ is finite-dimensional and $\bidualembedding{\cM}{\algebraicdualSpace{\cH}}$ is surjective.
\end{lemma}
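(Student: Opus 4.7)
The plan is to identify $\cH$ with a subspace of $(\fm/[\fm,\fm])^*$ via differentiation at the identity, and then to recover surjectivity of $\bidualembedding{\cM}{\algebraicdualSpace{\cH}}$ from the exponential map. In the Lie group setting, it is natural (and, for the conclusion to hold, implicitly required) to take $\cH$ to consist of \emph{continuous}, equivalently smooth, group homomorphisms $\cM \to \R$; otherwise already $\cM = \R$ admits uncountably many additive homomorphisms via a Hamel basis and finite-dimensionality would fail.

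Given $\varphi \in \cH$, its differential $d\varphi(e) \colon \fm \to \R$ is a Lie algebra homomorphism. Because $\R$ carries the trivial bracket, $d\varphi(e)$ vanishes on $[\fm,\fm]$ and hence descends to a functional $D\varphi \in (\fm/[\fm,\fm])^*$. The relation $\varphi(\exp_\cM(X)) = (d\varphi(e))(X)$, together with the classical fact that a connected Lie group is generated by $\exp_\cM(\fm)$, shows that $\varphi$ is uniquely determined by $D\varphi$. Hence $D \colon \cH \to (\fm/[\fm,\fm])^*$ is an injective linear map, and finite-dimensionality of its target delivers finite-dimensionality of~$\cH$.

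For surjectivity of $\bidualembedding{\cM}{\algebraicdualSpace{\cH}}$, I would exploit the finite dimensions and dualize the injection $D$. This yields a surjection
\begin{equation*}
	D^* \colon \fm/[\fm,\fm] \;\cong\; (\fm/[\fm,\fm])^{**} \;\twoheadrightarrow\; \algebraicdualSpace{\cH}
	,
	\quad
	[X] \mapsto \bigl(\varphi \mapsto (D\varphi)([X])\bigr)
	.
\end{equation*}
For $X \in \fm$ and $x \coloneqq \exp_\cM(X)$ the computation
\begin{equation*}
	\bidualembedding{\cM}{\algebraicdualSpace{\cH}}(x)(\varphi)
	=
	\varphi(\exp_\cM(X))
	=
	(D\varphi)([X])
	=
	D^*([X])(\varphi)
\end{equation*}
identifies $\bidualembedding{\cM}{\algebraicdualSpace{\cH}}(\exp_\cM(X))$ with $D^*([X])$. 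As $X$ ranges over $\fm$, the composite $\fm \twoheadrightarrow \fm/[\fm,\fm] \xtwoheadrightarrow{D^*} \algebraicdualSpace{\cH}$ is surjective, and therefore so is $\bidualembedding{\cM}{\algebraicdualSpace{\cH}}$.

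The main subtlety is the tacit continuity restriction on $\cH$ noted at the start; beyond that, the only nontrivial algebraic ingredient is that a connected Lie group is generated by the image of its exponential map, which is what makes $D$ injective and the surjectivity argument go through.
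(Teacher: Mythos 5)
Your proof follows essentially the same route as the paper's: differentiate at the identity to get an injective linear map $\cH \to \algebraicdualSpace{(\quotient{\fm}{[\fm,\fm]})}$ (injectivity from connectedness and generation by exponentials), then dualize that injection to a surjection and identify $\bidualembedding{\cM}{\algebraicdualSpace{\cH}} \circ \exponential{e}$ with it. Your opening remark that $\cH$ must tacitly consist of continuous (hence smooth) homomorphisms is a correct and worthwhile observation that the paper's proof leaves implicit when it writes $\varphi'(e)$.
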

\begin{proof}
	Consider the Lie group $(\cM,\cdot)$ with neutral element~$e$ and its exponential map $\exponential{e} \colon \fm \to \cM$, as well as $(\R,+)$ with neutral element~$0$ and $\exponential{0} = \id_{\R}$.
	Let $\varphi \in \cH$.
	By the commutativity of homomorphisms and exponential maps, we obtain
	\begin{equation*}
		\varphi'(e) \, v
		=
		\exponential{0}(\varphi'(e) \, v)
		=
		\varphi(\exponential{e}(v))
		\quad
		\text{for all }
		v \in \fm
		.
	\end{equation*}
	Thus, for any $v, w \in \fm$:
	\begin{equation}
		\label{eq:exphom}
		\varphi(\exponential{e}(v)\exponential{e}(w))
		=
		\varphi(\exponential{e}(v)) + \varphi(\exponential{e}(w))
		=
		\varphi'(e)(v+w)
		.
	\end{equation}
	Since for $\varphi \in \cH$, its derivative $\varphi'(e)$ is a Lie algebra homomorphism (\cite[Theorem~8.44]{Lee:2012:1}), we obtain in particular that
	\begin{equation*}
		\varphi'(e) \lie{v}{w}
		=
		\lie{\varphi'(e) \, v}{\varphi'(e) \, w}
		=
		0
	\end{equation*}
	holds, since $(\R,+)$ is Abelian and thus its Lie bracket $\lie{\cdot}{\cdot}$ vanishes.
	Denoting the linear space of these Lie algebra homomorphisms by
	\begin{equation*}
		\fh
		\coloneqq
		\setDef[big]{\ell \in \algebraicdualSpace{\fm}}{\ell([v,w]) = 0 \text{ for all } v, w \in \fm}
		\cong
		\algebraicdualSpace{(\fm/[\fm,\fm])}
	\end{equation*}
	we thus obtain a linear mapping
	\begin{equation}
		\label{eq:linmapH}
			H
			\colon
			\cH
			\ni
			\varphi
			\mapsto
			\varphi'(e)
			\in
			\fh.
	\end{equation}
	It follows from Lie theory that $H$ is injective when $\cM$ is connected, because then every $x \in \cM$ can be written as a product of factors $\exponential{e}(v_i)$ with $v_i \in \fm$.
	Due to \cref{eq:exphom} this implies that $\varphi$ is given uniquely by its values of $\varphi'$ on $\fm$.
	Thus, by the injectivity of~$H$, $\dim \cH \le \dim \fh = \dim \quotient{\fm}{[\fm,\fm]}$.

	Concerning the surjectivity of $\bidualembedding{\cM}{\dualSpace{\cH}}$, we compute
	\begin{equation*}
		\begin{aligned}
			\bidualembedding{\cM}{\dualSpace{\cH}}(\exponential{e}(v))(\varphi)
			&
			=
			\varphi(\exponential{e}(v))
			=
			\varphi'(e)v
			\\
			&
			=
			(H \varphi)(v)
			=
			\adj{H} v(\varphi)
			\quad
			\text{for all }
			v \in \fm
			,
			\varphi \in \cH
			.
		\end{aligned}
	\end{equation*}
	Since $H$ is injective, $\adj{H}$ is surjective, and thus $\bidualembedding{\cM}{\dualSpace{\cH}} \circ \exponential{e}$ is surjective as well.
	This in turn implies the surjectivity of $\bidualembedding{\cM}{\dualSpace{\cH}}$.
\end{proof}

Let us now consider the dual map
\begin{align*}
	\genadj{\bidualembedding{\cM}{\algebraicdualSpace{\cH}}}
	\colon
	\ExtF{\algebraicdualSpace{\cH}}
	&
	\to
	\ExtF{\cM}
	\\
	f
	&
	\mapsto
	\genadj{\bidualembedding{\cM}{\algebraicdualSpace{\cH}}}(f)
	\coloneqq
	f \circ \bidualembedding{\cM}{\algebraicdualSpace{\cH}}
	.
\end{align*}
That is, for $f \in \ExtF{\algebraicdualSpace{\cH}}$, we have
\begin{equation*}
	\genadj{\bidualembedding{\cM}{\algebraicdualSpace{\cH}}}(f)(x)
	=
	f(\bidualembedding{\cM}{\algebraicdualSpace{\cH}}(x))
	=
	f(\delta_x)
	\quad
	\text{for }
	x \in \cM
	.
\end{equation*}
We now restrict $\genadj{\bidualembedding{\cM}{\algebraicdualSpace{\cH}}}$ from $\ExtF{\algebraicdualSpace{\cH}}$ to $\algebraicbidualSpace{\cH}$, \ie, to \emph{linear real-valued} functions on~$\algebraicdualSpace{\cH}$.

\begin{lemma}
	\label{lem:emdAdj}
	Suppose that $(\cM,\cdot)$ is a group and $\cH = \Homo((\cM,\cdot),(\R,+))$ is finite-dim\-en\-sio\-nal.
	Then $\restr{\genadj{\bidualembedding{\cM}{\algebraicdualSpace{\cH}}}}{\algebraicbidualSpace{\cH}} \colon \algebraicbidualSpace{\cH} \to \cH$ is an isomorphism of linear spaces.
	Therefore,
	\begin{equation*}
		\cH
		=
		\genadj{\bidualembedding{\cM}{\algebraicdualSpace{\cH}}}(\algebraicbidualSpace{\cH})
		=
		\setDef{\ell \circ \bidualembedding{\cM}{\algebraicdualSpace{\cH}}}{\ell \in \algebraicbidualSpace{\cH}}
		.
	\end{equation*}
	The inverse of $\restr{\genadj{\bidualembedding{\cM}{\algebraicdualSpace{\cH}}}}{\algebraicbidualSpace{\cH}}$ is the canonical embedding $\embedding{\cH}{\algebraicbidualSpace{\cH}} \colon \cH \to \algebraicbidualSpace{\cH}$.
\end{lemma}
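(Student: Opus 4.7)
The plan is to verify, by direct unfolding of the definitions, that $\restr{\genadj{\bidualembedding{\cM}{\algebraicdualSpace{\cH}}}}{\algebraicbidualSpace{\cH}}$ and $\embedding{\cH}{\algebraicbidualSpace{\cH}}$ are mutually inverse linear maps between $\algebraicbidualSpace{\cH}$ and $\cH$. Since $\cH$ is finite-dimensional, the canonical embedding $\embedding{\cH}{\algebraicbidualSpace{\cH}} \colon \cH \to \algebraicbidualSpace{\cH}$ is already known to be a linear isomorphism, so it suffices to confirm that the pullback lands in $\cH$ and that the composition in one direction is the identity.

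First, I would verify that the pullback restricted to $\algebraicbidualSpace{\cH}$ indeed takes values in $\cH$. Given $\ell \in \algebraicbidualSpace{\cH}$, the pullback is $\genadj{\bidualembedding{\cM}{\algebraicdualSpace{\cH}}}(\ell) = \ell \circ \bidualembedding{\cM}{\algebraicdualSpace{\cH}} \colon \cM \to \R$. By \cref{lem:JHhom}, $\bidualembedding{\cM}{\algebraicdualSpace{\cH}}$ is a group homomorphism $(\cM,\cdot) \to (\algebraicdualSpace{\cH},+)$, and $\ell$, being linear, is in particular additive as a map $(\algebraicdualSpace{\cH},+) \to (\R,+)$. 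Hence the composition is a group homomorphism and lies in~$\cH$.

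Next, I would check that $\restr{\genadj{\bidualembedding{\cM}{\algebraicdualSpace{\cH}}}}{\algebraicbidualSpace{\cH}} \circ \embedding{\cH}{\algebraicbidualSpace{\cH}} = \id_\cH$. For $\varphi \in \cH$, set $\ell \coloneqq \embedding{\cH}{\algebraicbidualSpace{\cH}}(\varphi)$, so that $\ell(\mu) = \mu(\varphi)$ for every $\mu \in \algebraicdualSpace{\cH}$. Evaluating at an arbitrary $x \in \cM$ yields $\genadj{\bidualembedding{\cM}{\algebraicdualSpace{\cH}}}(\ell)(x) = \ell(\delta_x) = \delta_x(\varphi) = \varphi(x)$, so the composition returns $\varphi$.

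To finish, I would invoke that $\embedding{\cH}{\algebraicbidualSpace{\cH}}$ is a linear bijection (by finite-dimensionality of $\cH$) and that $\genadj{\bidualembedding{\cM}{\algebraicdualSpace{\cH}}}$ is linear on real-valued arguments (noted just after \cref{definition:dual-map}). The identity $\restr{\genadj{\bidualembedding{\cM}{\algebraicdualSpace{\cH}}}}{\algebraicbidualSpace{\cH}} \circ \embedding{\cH}{\algebraicbidualSpace{\cH}} = \id_\cH$ then forces $\restr{\genadj{\bidualembedding{\cM}{\algebraicdualSpace{\cH}}}}{\algebraicbidualSpace{\cH}}$ to coincide with $(\embedding{\cH}{\algebraicbidualSpace{\cH}})^{-1}$, so both maps are isomorphisms and mutually inverse; the displayed equality $\cH = \genadj{\bidualembedding{\cM}{\algebraicdualSpace{\cH}}}(\algebraicbidualSpace{\cH})$ is then just the surjectivity assertion. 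There is no real obstacle here; the only care required is to keep straight the roles of $\cH$, $\algebraicdualSpace{\cH}$, and $\algebraicbidualSpace{\cH}$, and to notice that only additivity of the linear $\ell$ is needed to transport the group-homomorphism property of $\bidualembedding{\cM}{\algebraicdualSpace{\cH}}$ through to $\cH$.
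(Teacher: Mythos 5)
Your proposal is correct and follows essentially the same route as the paper: verify that the pullback of a linear functional on $\algebraicdualSpace{\cH}$ is a homomorphism (hence lands in $\cH$), compute that composing with the canonical embedding $\embedding{\cH}{\algebraicbidualSpace{\cH}}$ gives $\id_\cH$ via $\ell(\delta_x) = \delta_x(\varphi) = \varphi(x)$, and conclude by finite-dimensionality. The only cosmetic difference is that you invoke bijectivity of $\embedding{\cH}{\algebraicbidualSpace{\cH}}$ directly to upgrade the one-sided identity, whereas the paper counts equal finite dimensions; these are interchangeable.
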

\begin{proof}
	To show $\genadj{\bidualembedding{\cM}{\algebraicdualSpace{\cH}}}(\algebraicbidualSpace{\cH})\subseteq \cH$, we observe that both $\bidualembedding{\cM}{\algebraicdualSpace{\cH}} \colon (\cM,\cdot) \to (\algebraicdualSpace{\cH},+)$ and $\ell \colon (\algebraicdualSpace{\cH},+) \to (\R,+)$ are homomorphisms and so is their composition $\genadj{\bidualembedding{\cM}{\algebraicdualSpace{\cH}}} \circ \ell = \ell \circ \bidualembedding{\cM}{\algebraicdualSpace{\cH}} \in \Homo((\cM,\cdot),(\R,+)) = \cH$.
	Thus $\restr{\genadj{\bidualembedding{\cM}{\algebraicdualSpace{\cH}}}}{\algebraicbidualSpace{\cH}} \colon \algebraicbidualSpace{\cH}\to \cH$ is well-defined.

	For the proof of bijectivity of $\restr{\genadj{\bidualembedding{\cM}{\algebraicdualSpace{\cH}}}}{\algebraicbidualSpace{\cH}}$, consider the canonical embedding $\embedding{\cH}{\algebraicbidualSpace{\cH}} \colon \cH \to \algebraicbidualSpace{\cH}$ of~$\cH$ into its bidual space.
	Both $\restr{\genadj{\bidualembedding{\cM}{\algebraicdualSpace{\cH}}}}{\algebraicbidualSpace{\cH}}$ and $\embedding{\cH}{\algebraicbidualSpace{\cH}}$ are linear maps between linear spaces $\cH$ and $\algebraicbidualSpace{\cH}$ of finite and equal dimension.
	It is therefore enough to verify that $\genadj{\bidualembedding{\cM}{\algebraicdualSpace{\cH}}} \circ \embedding{\cH}{\algebraicbidualSpace{\cH}} = \id_\cH$
	holds.
	Indeed, for $\varphi \in \cH$ and $x \in \cM$ we evaluate
	\begin{align*}
		\paren[big][]{(\genadj{\bidualembedding{\cM}{\algebraicdualSpace{\cH}}} \circ \embedding{\cH}{\algebraicbidualSpace{\cH}})(\varphi)}(x)
		&
		=
		\paren[big][]{(\embedding{\cH}{\algebraicbidualSpace{\cH}}(\varphi)) \circ \bidualembedding{\cM}{\algebraicdualSpace{\cH}}}(x)
		\\
		&
		=
		\paren[big][]{\embedding{\cH}{\algebraicbidualSpace{\cH}}(\varphi)}(\delta_x)
		=
		\delta_x(\varphi)
		=
		\varphi(x)
		.
		\qedhere
	\end{align*}
\end{proof}
It is clear that when $\cH$ is endowed with the standard Euclidean topology, then the algebraic and topological dual spaces agree, as do the respective bidual spaces, \ie, we have $\algebraicdualSpace{\cH} = \dualSpace{\cH}$ and $\algebraicbidualSpace{\cH} = \bidualSpace{\cH}$.

We proceed to characterize the $\sup$-closure of $\cH$.
\begin{theorem}
	\label{theorem:characterization-of-sup-closure}
	Suppose that $(\cM,\cdot)$ is a group and $\cH = \Homo((\cM,\cdot),(\R,+))$ is finite-dim\-en\-sio\-nal and endowed with the standard Euclidean topology.
	The $\sup$-closure of $\cH \subseteq \ExtF{\cM}$ is related to the set of convex \lsc functions on $\dualSpace{\cH}$ in the following way:
	\begin{equation*}
		\begin{aligned}
			\supcl{\cH}
			&
			=
			\genadj{\bidualembedding{\cM}{\dualSpace{\cH}}}(\supcl{\bidualSpace{\cH}})
			\\
			&
			=
			\setDef[big]{f \circ \bidualembedding{\cM}{\dualSpace{\cH}}}{f \colon \dualSpace{\cH} \to \extR \text{ is convex and \lsc}} \cup \set{-\infty}
			.
		\end{aligned}
	\end{equation*}
\end{theorem}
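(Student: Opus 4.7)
The plan is to chain together three earlier results: the pullback formula for $\sup$-closures from \cref{proposition:pullback}~\ref{item:pullback:2}, the identification of $\cH$ with the pullback of $\bidualSpace{\cH}$ from \cref{lem:emdAdj}, and the classical characterization of the $\sup$-closure of continuous linear functionals on a locally convex space from \cref{example:sup-closures}~(1).

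First, I would apply \cref{proposition:pullback}~\ref{item:pullback:2} with the map $A \coloneqq \bidualembedding{\cM}{\dualSpace{\cH}} \colon \cM \to \dualSpace{\cH}$ and with the set of test functions $\cF \coloneqq \bidualSpace{\cH} \subseteq \realF(\dualSpace{\cH})$. This yields
\begin{equation*}
  \genadj{\bidualembedding{\cM}{\dualSpace{\cH}}}(\supcl{\bidualSpace{\cH}})
  =
  \supcl{\genadj{\bidualembedding{\cM}{\dualSpace{\cH}}}(\bidualSpace{\cH})}.
\end{equation*}
Since $\cH$ is finite-dimensional and carries the Euclidean topology, the algebraic and topological bidual coincide, so \cref{lem:emdAdj} gives $\genadj{\bidualembedding{\cM}{\dualSpace{\cH}}}(\bidualSpace{\cH}) = \cH$. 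Substituting this identity into the right-hand side turns it into $\supcl{\cH}$, which establishes the first equality $\supcl{\cH} = \genadj{\bidualembedding{\cM}{\dualSpace{\cH}}}(\supcl{\bidualSpace{\cH}})$.

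Next, to obtain the second equality, I would apply \cref{example:sup-closures}~(1) to the locally convex linear topological space $V \coloneqq \dualSpace{\cH}$ (again, finite-dimensional Euclidean), whose topological dual is $\dualSpace{V} = \bidualSpace{\cH}$. The example tells us that $\supcl{\bidualSpace{\cH}}$, viewed as a subset of $\ExtF{\dualSpace{\cH}}$, consists precisely of the \lsc convex functions $f \colon \dualSpace{\cH} \to \extR$ together with the constant function $-\infty$ (the latter arising as the pointwise supremum over the empty subset of $\affine{\bidualSpace{\cH}}$). Pulling these functions back by $\bidualembedding{\cM}{\dualSpace{\cH}}$ produces exactly the set $\setDef{f \circ \bidualembedding{\cM}{\dualSpace{\cH}}}{f \colon \dualSpace{\cH} \to \extR \text{ convex and \lsc}} \cup \set{-\infty}$, since the pullback of the constant $-\infty$ is $-\infty$ on $\cM$.

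The only real subtlety, and the main obstacle worth double-checking, is the bookkeeping between algebraic and topological duals: we invoke \cref{lem:emdAdj} with the \emph{algebraic} bidual $\algebraicbidualSpace{\cH}$ and then need to pass to the \emph{topological} bidual $\bidualSpace{\cH}$ before invoking \cref{example:sup-closures}~(1). Finite-dimensionality of $\cH$ together with the Euclidean topology ensures these spaces coincide, so the two uses of the notation are consistent, and the chain of equalities closes without gap. The $-\infty$ cases in both identities have to be tracked explicitly (as done above), but they pose no real difficulty.
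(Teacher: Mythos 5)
Your proposal is correct and follows essentially the same route as the paper: both chain \cref{proposition:pullback}~\ref{item:pullback:2} with \cref{lem:emdAdj} (using finite-dimensionality to identify algebraic and topological (bi)duals) and then invoke \cref{example:sup-closures} to identify $\supcl{\bidualSpace{\cH}}$ with the \lsc convex functions on $\dualSpace{\cH}$. Your explicit tracking of the $-\infty$ element is slightly more careful than the paper's write-up but does not change the argument.
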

\begin{proof}
	From \cref{lem:emdAdj} and \cref{proposition:pullback} and the fact that $\algebraicdualSpace{\cH} = \dualSpace{\cH}$ and $\algebraicbidualSpace{\cH} = \bidualSpace{\cH}$ hold, we conclude
	\begin{equation*}
		\supcl{\cH}
		=
		\supcl{\genadj{\bidualembedding{\cM}{\dualSpace{\cH}}}(\bidualSpace{\cH})}
		=
		\genadj{\bidualembedding{\cM}{\dualSpace{\cH}}}(\supcl{\bidualSpace{\cH}})
		.
	\end{equation*}
	By \eqref{eq:sup-closure:2}, $\supcl{\bidualSpace{\cH}}$ is the set of functions which can be written as pointwise suprema over collections of affine functions on $\dualSpace{\cH}$.
	That is, $\supcl{\bidualSpace{\cH}}$ is the set of \lsc convex functions on $\dualSpace{\cH}$; see \cref{example:sup-closures}.
\end{proof}

A subset $\cK$ of a linear space is called mid-point convex if, for any $x, y \in \cK$, also $\frac{1}{2}(x+y) \in \cK$.
A function $f \in \ExtF{\cK}$ is called mid-point convex if its epigraph is mid-point convex.
\begin{proposition}
	\label{proposition:convexity}
	Suppose that $(\cM,\cdot)$ is a group and $\cH = \Homo((\cM,\cdot),(\R,+))$ is finite-dimensional.
	Consider functions $f \colon \algebraicdualSpace{\cH} \to \extR$ and $\psi = f \circ \bidualembedding{\cM}{\algebraicdualSpace{\cH}}$.
	\begin{enumerate}
		\item \label[statement]{item:convexity:1}
			Suppose that $f \colon \algebraicdualSpace{\cH} \to \extR$ is mid-point convex.
			Then
			\begin{equation}
				\label{eq:groupConvex}
				\psi(x)
				\le
				\frac{1}{2} \paren[big](){\psi(y \cdot x) + \psi(y^{-1} \cdot x)}
				\quad
				\text{for all }
				x, y \in \cM
				.
			\end{equation}

		\item \label[statement]{item:convexity:2}
			Conversely, if $\cK \coloneqq \range \bidualembedding{\cM}{\algebraicdualSpace{\cH}}$ is mid-point convex and $\psi$ satisfies~\eqref{eq:groupConvex}, then $\restr{f}{\cK}$ is mid-point convex.
	\end{enumerate}
\end{proposition}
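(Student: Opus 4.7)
The plan is to exploit the fact, established in \cref{lem:JHhom}, that $\bidualembedding{\cM}{\algebraicdualSpace{\cH}}$ is a group homomorphism from $(\cM,\cdot)$ to $(\algebraicdualSpace{\cH},+)$. This yields the central algebraic identities
\begin{equation*}
	\delta_{y \cdot x}
	=
	\delta_y + \delta_x
	,
	\quad
	\delta_{y^{-1} \cdot x}
	=
	-\delta_y + \delta_x
	,
	\quad
	\text{so that}
	\quad
	\delta_x
	=
	\tfrac{1}{2} \paren[big](){\delta_{y \cdot x} + \delta_{y^{-1} \cdot x}}
	.
\end{equation*}
This is the bridge between the additive mid-point convexity on $\algebraicdualSpace{\cH}$ and the multiplicative inequality~\eqref{eq:groupConvex} on $\cM$.

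For \cref{item:convexity:1}, I will use the equivalent formulation of mid-point convexity via the epigraph: $f(\frac{1}{2}(u+v)) \le \frac{1}{2}(f(u) + f(v))$ for all $u, v \in \algebraicdualSpace{\cH}$ (trivially satisfied when the right-hand side equals $+\infty$). Setting $u \coloneqq \delta_{y \cdot x}$ and $v \coloneqq \delta_{y^{-1} \cdot x}$, the midpoint identity above gives $f(\delta_x) \le \tfrac{1}{2}(f(\delta_{y \cdot x}) + f(\delta_{y^{-1} \cdot x}))$, which unpacks precisely to~\eqref{eq:groupConvex}.

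For \cref{item:convexity:2}, given $u, v \in \cK$ with, say, $u = \delta_a$ and $v = \delta_b$, I need to produce $x, y \in \cM$ satisfying $\delta_x = \tfrac{1}{2}(u+v)$, $\delta_{y \cdot x} = u$ and $\delta_{y^{-1} \cdot x} = v$. Solving these equations in $(\algebraicdualSpace{\cH},+)$ forces $\delta_y = \tfrac{1}{2}(u-v)$. Since $\bidualembedding{\cM}{\algebraicdualSpace{\cH}}$ is a group homomorphism, its image $\cK$ is automatically a subgroup of $(\algebraicdualSpace{\cH},+)$, so $-v \in \cK$. Mid-point convexity of $\cK$ then produces $x \in \bidualembedding{\cM}{\algebraicdualSpace{\cH}}^{-1}(\tfrac{1}{2}(u+v))$ and $y \in \bidualembedding{\cM}{\algebraicdualSpace{\cH}}^{-1}(\tfrac{1}{2}(u + (-v)))$. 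Applying \eqref{eq:groupConvex} to this pair yields $f(\tfrac{1}{2}(u+v)) \le \tfrac{1}{2}(f(u) + f(v))$, which together with $\tfrac{1}{2}(u+v) \in \cK$ gives mid-point convexity of $\epi \restr{f}{\cK}$.

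The main subtlety is the part~\cref{item:convexity:2} direction: one has to verify that the element $y$ encoding the group-theoretic "displacement" can actually be realised in $\cM$. This is precisely where the hypothesis that $\cK$ is mid-point convex enters, combined with the subgroup structure of $\cK$ inherited from the homomorphism property of $\bidualembedding{\cM}{\algebraicdualSpace{\cH}}$. Once this realisation is secured, the translation between the two inequalities is immediate.
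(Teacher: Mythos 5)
Your proof is correct and follows essentially the same route as the paper: both rest on the homomorphism identity $\delta_{y\cdot x}=\delta_y+\delta_x$ and the change of variables $\tilde\ell_1=\ell_1+\ell_2$, $\tilde\ell_2=\ell_1-\ell_2$ (equivalently, your $u,v$ versus $x,y$). Your treatment of the converse is in fact slightly more explicit than the paper's terse ``vice-versa'', since you spell out why the required $y$ with $\delta_y=\tfrac12(u-v)$ exists in $\cK$, using the subgroup property of $\range\bidualembedding{\cM}{\algebraicdualSpace{\cH}}$ together with mid-point convexity of $\cK$.
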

\begin{proof}
	Since $\bidualembedding{\cM}{\algebraicdualSpace{\cH}}$ is a homomorphism by \cref{lem:JHhom}, inequality \eqref{eq:groupConvex} is equivalent to
	\begin{equation*}
		f(\delta_x)
		\le
		\frac{1}{2} \paren[big](){f(\delta_x + \delta_y) + f(\delta_x - \delta_y)}
		\quad
		\text{for all }
		x, y \in \cM
		.
	\end{equation*}
	This is in turn equivalent to
	\begin{equation*}
		f(\ell_1)
		\le
		\frac{1}{2} \paren[big](){f(\ell_1 + \ell_2) + f(\ell_1 - \ell_2)}
		\quad
		\text{for all }
		\ell_1, \ell_2 \in \cK
		.
	\end{equation*}
	Inserting into this inquality the functionals $\tilde \ell_1 \coloneqq \ell_1 + \ell_2$ and $\tilde \ell_2 \coloneqq \ell_1 - \ell_2$ so that $\ell_1 = \frac{1}{2}(\tilde \ell_1+\tilde \ell_2)$, we conclude that~\eqref{eq:groupConvex} holds if $f$ is mid-point convex (and vice-versa on $\cK$, if $\cK$ is mid-point convex):
	\begin{align*}
		f \paren[big](){ \frac{1}{2}(\tilde \ell_1 + \tilde \ell_2)}
		&
		\le
		\frac{1}{2} \paren[big](){f(\tilde \ell_1) + f(\tilde \ell_2)}
		\quad
		\text{for all }
		\tilde \ell_1, \tilde \ell_2 \in \algebraicdualSpace{\cH}
		.
		\qedhere
	\end{align*}
\end{proof}

We point out that \eqref{eq:groupConvex} can be seen as a definition for a function $\cM \to \extR$ to be mid-point convex on the group $(\cM,\cdot)$.
This notion agrees with the one introduced in \cite{JarczykLaczkovich:2009:1}.

\section{Conclusion}
\label{section:conclusion}

The classical Fenchel conjugate~$\conjugate{f}$ is defined for extended real-valued functions $f \in \ExtF{V}$ on a linear space~$V$, and it acts on \emph{linear} functions on~$V$.

In this paper we generalize the classical Fenchel conjugate to nonlinear spaces $\cM$, where linear functions do not exist.
This new concept of a nonlinear Fenchel conjugate~$\genconj{f}$ applies to extended real-valued functions $f \in \ExtF{\cM}$ on any set~$\cM$.
Several classical key results --- including the Fenchel-Young inequality and calculus rules --- carry over to the nonlinear Fenchel conjugate.
For the classical Fenchel conjugate, the concept of $\Gamma$-regularization is obtained by approximating a function by the supremum of its affine minorants.
For the nonlinear Fenchel conjugate, we generalise this to the so-called $\cF$-regularization using minorants more general than just affine functions.
In this setting, the biconjugation theorem carries over to the nonlinear Fenchel conjugate.

By considering differentiable manifolds in place of general sets~$\cM$, we are able to relate the nonlinear Fenchel conjugate with the viscosity Fréchet subdifferential.
In addition, we recover previous attempts to generalize Fenchel conjugates to manifolds.
Finally, when $\cM$ is a group, we found a generalization of the classical infimal convolution formula.

An interesting aspect for future work is how this general framework of nonlinear Fenchel conjugates can be used to derive optimization algorithms based on operator splittings similar to primal-dual hybrid gradient algorithms, see for instance~\cite{EsserZhangChan:2010:1}, or the Chambolle-Pock algorithm~\cite{ChambollePock:2011:1}.

\appendix
\section{Equivalence of \eqref{eq:general-nonlinear-Fenchel-conjugate}--\eqref{eq:general-nonlinear-Fenchel-conjugate:alternative:3}}
\label{section:equivalence-of-alternative-formulations-of-Fenchel-conjugate}

For convenience, we recall the equations under consideration:
\begin{align*}
	&
	\genconj{f}(\varphi)
	=
	\sup \setDef{\varphi(x) - f(x)}{x \in \domain{\varphi - f}}
	\tag{\ref{eq:general-nonlinear-Fenchel-conjugate}}
	\\
	&
	\begin{aligned}
		\genconj{f}(\varphi)
		&
		=
		\sup \setDef{\varphi(x) - f(x)}{x \in \rdomain{\varphi - f}}
		\\
		&
		=
		\sup \setDef{\varphi(x) - f(x)}{x \in \cM, \; \varphi(x) \neq -\infty \text{ and } f(x) \neq +\infty}
	\end{aligned}
	\tag{\ref{eq:general-nonlinear-Fenchel-conjugate:alternative:1}}
	\\
	&
	\genconj{f}(\varphi)
	=
	\inf \setDef{c \in \R}{\varphi(x) \le f(x)+c \text{ for all } x \in \cM}
	\tag{\ref{eq:general-nonlinear-Fenchel-conjugate:alternative:2}}
	\\
	&
	\genconj{f}(\varphi)
	=
	\sup \setDef{\varphi(x) - c}{(x,c) \in \epi f}
	\tag{\ref{eq:general-nonlinear-Fenchel-conjugate:alternative:3}}
\end{align*}
for $f, \varphi \in \ExtF{\cM}$.

\begin{proof}[Proof of equivalence]
	The conditions in \eqref{eq:general-nonlinear-Fenchel-conjugate:alternative:1} exclude points outside of $\domain{\varphi - f}$ and, in addition, points where $\varphi(x) - f(x) = -\infty$.
	This shows the equivalence between \eqref{eq:general-nonlinear-Fenchel-conjugate} and \eqref{eq:general-nonlinear-Fenchel-conjugate:alternative:1}.
	To prove the equivalence of \eqref{eq:general-nonlinear-Fenchel-conjugate:alternative:1} and \eqref{eq:general-nonlinear-Fenchel-conjugate:alternative:2}, we notice that we can exclude from \eqref{eq:general-nonlinear-Fenchel-conjugate:alternative:2} all points that impose no restrictions on the admissible values for~$c$.
	In other words, \eqref{eq:general-nonlinear-Fenchel-conjugate:alternative:2} is equal to $\inf \setDef{c \in \R}{\varphi(x) \le f(x) + c \text{ for all } x \in \rdomain{\varphi - f}}$.
	The equivalence between \eqref{eq:general-nonlinear-Fenchel-conjugate:alternative:1} and \eqref{eq:general-nonlinear-Fenchel-conjugate:alternative:2} is now straightforward.
	Finally we show the equivalence between \eqref{eq:general-nonlinear-Fenchel-conjugate} and \eqref{eq:general-nonlinear-Fenchel-conjugate:alternative:3}.
	If $f(x) = +\infty$, then $(x,c) \not \in \epi f$ for any $c \in \R$.
	If $\varphi(x) = -\infty$, then $\varphi(x)-c = -\infty$ for all $c \in \R$.
	Thus we can exclude all these $x$ from the supremum in~\eqref{eq:general-nonlinear-Fenchel-conjugate:alternative:3}, and we may write the right-hand side of \eqref{eq:general-nonlinear-Fenchel-conjugate:alternative:3} as
	\begin{equation*}
		\sup \setDef{\varphi(x) - c}{(x,c) \in \epi f}
		=
		\sup \setDef{\varphi(x) - c}{c \ge f(x) \text{ and } x \in \domain{\varphi - f}}
		.
	\end{equation*}
	For $x \in \domain{\varphi - f}$ we have that $\varphi(x) - f(x) = \sup \setDef{\varphi(x) - c}{c \ge f(x)}$ is well-defined, showing the desired equality.
\end{proof}

\printbibliography

\end{document}